\newtheorem{thm}{Theorem}[section]
\newtheorem{corollary}[thm]{Corollary}
\newtheorem{define}[thm]{Definition}
\numberwithin{equation}{section}
\numberwithin{figure}{section}
\DeclareMathOperator{\tr}{tr}
\DeclareMathOperator{\divergence}{div}
\title{Variational integrators for stochastic dissipative Hamiltonian systems}
\date{}
\author[1,2]{Michael Kraus\thanks{\texttt{michael.kraus@ipp.mpg.de}}}
\author[1]{Tomasz M. Tyranowski\thanks{\texttt{tomasz.tyranowski@ipp.mpg.de}}}
\affil[1]{\small Max-Planck-Institut f\"ur Plasmaphysik \authorcr Boltzmannstra{\ss}e 2, 85748 Garching, Germany}
\affil[2]{\small Technische Universit\"{a}t M\"{u}nchen, Zentrum Mathematik \authorcr Boltzmannstra{\ss}e 3, 85748 Garching, Germany}
\begin{document}

\maketitle

\begin{abstract}
Variational integrators are derived for structure-preserving simulation of stochastic forced Hamiltonian systems. The derivation is based on a stochastic discrete Hamiltonian which approximates a type-II stochastic generating function for the stochastic flow of the Hamiltonian system. The generating function is obtained by introducing an appropriate stochastic action functional and considering a stochastic generalization of the deterministic Lagrange-d'Alembert principle. Our approach presents a general methodology to derive new structure-preserving numerical schemes. The resulting integrators satisfy a discrete version of the stochastic Lagrange-d'Alembert principle, and in the presence of symmetries, they also satisfy a discrete counterpart of Noether's theorem. Furthermore, mean-square and weak Lagrange-d'Alembert Runge-Kutta methods are proposed and tested numerically to demonstrate their superior long-time numerical stability and energy behavior compared to non-geometric methods. The Vlasov-Fokker-Planck equation is considered as one of the numerical test cases, and a new geometric approach to collisional kinetic plasmas is presented.
\end{abstract}

\section{Introduction}
\label{sec:intro}
Stochastic differential equations (SDEs) play an important role in modeling dynamical systems subject to internal or external random fluctuations. Standard references include \cite{ArnoldSDE}, \cite{IkedaWatanabe1989}, \cite{KloedenPlatenSDE}, \cite{Kunita1997}, \cite{MilsteinBook}, \cite{ProtterStochastic}. Within this class of problems, we are interested in stochastic forced Hamiltonian systems, which take the form 

\begin{align}
\label{eq: Stochastic dissipative Hamiltonian system}
d_t q &= \phantom{-}\frac{\partial H}{\partial p}dt + \sum_{i=1}^m\frac{\partial h_i}{\partial p}\circ dW^i(t), \nonumber \\
d_t p &= \bigg[-\frac{\partial H}{\partial q} + F(q,p) \bigg] dt + \sum_{i=1}^m \bigg[-\frac{\partial h_i}{\partial q}+f_i(q,p)\bigg]\circ dW^i(t),
\end{align}

\noindent
where $H=H(q,p)$ and $h_i=h_i(q,p)$ for $i=1,\ldots,m$ are the Hamiltonian functions, $F=F(q,p)$ and $f_i=f_i(q,p)$ are the forcing terms, $W(t)=(W^1(t),\ldots,W^m(t))$ is the standard $m$-dimensional Wiener process, and $\circ$ denotes Stratonovich integration. We use $d_t$ to denote the stochastic differential of stochastic processes (other than the Wiener process $W(t)$) to avoid confusion with the exterior derivative $d$ of differential forms. The system \eqref{eq: Stochastic dissipative Hamiltonian system} can be formally regarded as a classical forced Hamiltonian system with the randomized Hamiltonian given by $\widehat H(q,p,t) = H(q,p) + \sum_{i=1}^m h_i(q,p)\circ \dot W^i(t)$, and the randomized forcing given by $\widehat F(q,p,t) = F(q,p) + \sum_{i=1}^m f_i(q,p)\circ \dot W^i(t)$, where $H(q,p)$ and $F(q,p)$ are the deterministic Hamiltonian and forcing, respectively, and  $h_i(q,p)$, $f_i(q,p)$ represent the intensity of the noise. Equation~\eqref{eq: Stochastic dissipative Hamiltonian system} is a generalization of stochastic Hamiltonian systems considered in \cite{Bismut}, \cite{HolmTyranowskiGalerkin}, \cite{LaCa-Or2008}, and \cite{MilsteinRepin}. Such systems can be used to model, e.g., mechanical systems with uncertainty, or error, assumed to arise from random forcing, limited precision of experimental measurements, or unresolved physical processes on which the Hamiltonian of the deterministic system might otherwise depend. Applications arise in many models in physics, chemistry, and biology. Particular examples include molecular dynamics (see, e.g., \cite{Beard2000}, \cite{Izaguirre2001}, \cite{Lacasta2004}, \cite{Skeel1999}), dissipative particle dynamics (see, e.g., \cite{Ripoll2001}), investigations of the dispersion of passive tracers in turbulent flows (see, e.g., \cite{Sawford2001}, \cite{Thomson1987}), energy localization in thermal equilibrium (see, e.g., \cite{Reigada1999}), lattice dynamics in strongly anharmonic crystals (see, e.g., \cite{Gornostyrev1996}), description of noise induced transport in stochastic ratchets (see, e.g., \cite{Landa1998}), and collisional kinetic plasmas (\cite{Kleiber2011}, \cite{Sonnendrucker2015}).

As occurs for other SDEs, most Hamiltonian SDEs cannot be solved analytically and one must resort to numerical simulations to obtain approximate solutions. In principle, general purpose stochastic numerical schemes for SDEs can be applied to stochastic Hamiltonian systems. However, as for their deterministic counterparts, stochastic Hamiltonian systems possess several important geometric features: in the case of systems without forcing, their phase space flows (almost surely) preserve the \emph{symplectic} structure (\cite{Bismut}, \cite{MilsteinRepin2001}, \cite{MilsteinRepin});  when the forcing terms are present, then the solutions also satisfy the stochastic \emph{Lagrange-d'Alembert principle}, as will be shown in Section~\ref{sec:Lagrange-d'Alembert principle for stochastic forced Hamiltonian systems}, and in some special cases the phase space flow may be \emph{conformally symplectic} (see \cite{BouRabeeOwhadiBoltzmannGibbs}, \cite{Hong2017}, \cite{MilsteinQuasiSymplectic}). When simulating these systems numerically, it is therefore advisable that the numerical scheme also preserves such geometric features. Geometric integration of deterministic Hamiltonian systems has been thoroughly studied (see \cite{HLWGeometric}, \cite{McLachlanQuispel}, \cite{SanzSerna} and the references therein) and symplectic integrators have been shown to demonstrate superior performance in long-time simulations of Hamiltonian systems without forcing, compared to non-symplectic methods; so it is natural to pursue a similar approach for stochastic Hamiltonian systems. This is a relatively recent pursuit. Stochastic symplectic integrators are discussed in \cite{Anmarkrud2017}, \cite{AntonWeak2014}, \cite{AntonGlobalError2013}, \cite{Anton2013}, \cite{Burrage2012}, \cite{Burrage2014}, \cite{AntonHighOrder2014}, \cite{Hong2015}, \cite{MaDing2012}, \cite{MaDing2015}, \cite{MilsteinRepin2001}, \cite{MilsteinRepin}, \cite{Misawa2010}, \cite{SunWang2016}, \cite{WangPHD}, \cite{Wang2014}, \cite{Wang2017}, \cite{Zhou2017}.

Long-time accuracy and near preservation of the Hamiltonian by symplectic integrators applied to deterministic Hamiltonian systems have been rigorously studied using the so-called backward error analysis (see, e.g., \cite{HLWGeometric} and the references therein). To the best of our knowledge, such general rigorous results have not yet been proved for stochastic Hamiltonian systems, but backward error analysis for SDEs is currently an active area of research. Modified SDEs associated with some particular numerical schemes are considered in \cite{AbdulleCohen2012}, \cite{Debussche2012}, \cite{Deng2017}, \cite{Shardlow2006}, \cite{Wang2016}, and \cite{Zygalakis2011}. Backward error analysis for the Langevin equation with additive noise is studied for several integrators in \cite{AbdulleVilmart2015}, \cite{Kopec2014}, and \cite{Kopec2015}. Recently, backward error analysis for a weak symplectic scheme applied to a stochastic Hamiltonian system has been presented in \cite{Anton2019}. Asymptotic preservation of large deviation principles by stochastic symplectic methods is investigated in \cite{ChenHongLDP}. The numerical evidence and partial theoretical results to date are promising and suggest that stochastic geometric integrators indeed possess the property of very accurately capturing the evolution of the Hamiltonian $H$ over long time intervals.

An important class of geometric integrators are \emph{variational integrators}. This type of numerical schemes is based on discrete variational principles and provides a natural framework for the discretization of Lagrangian systems, including forced, dissipative, or constrained ones. These methods have the advantage that they are symplectic when applied to systems without forcing, and in the presence of a symmetry, they satisfy a discrete version of Noether's theorem. For an overview of variational integration for deterministic systems see \cite{MarsdenWestVarInt}; see also \cite{HallLeokSpectral}, \cite{JaySPARK}, \cite{KaneMarsden2000}, \cite{LeokShingel}, \cite{LeokZhang}, \cite{OberBlobaum2016}, \cite{OberBlobaum2015}, \cite{RowleyMarsden}, \cite{TyranowskiDesbrunLinearLagrangians}, \cite{VankerschaverLeok}. Variational integrators were introduced in the context of finite-dimensional mechanical systems, but were later generalized to Lagrangian field theories (see \cite{MarsdenPatrickShkoller}) and applied in many computations, for example in elasticity, electrodynamics, or fluid dynamics; see \cite{LewAVI}, \cite{Pavlov}, \cite{SternDesbrun}, \cite{TyranowskiDesbrunRAMVI}.

Stochastic variational integrators were first introduced in \cite{BouRabeeSVI} and further studied in \cite{BouRabeeConstrainedSVI}. However, those integrators were restricted to the special case when the Hamiltonian functions $h_i=h_i(q)$ were independent of $p$, and only low-order Runge-Kutta types of discretization were considered. Stochastic discrete Hamiltonian variational integrators applicable to a general class of Hamiltonian systems were proposed in \cite{HolmTyranowskiGalerkin} by generalizing the variational principle for deterministic systems introduced in \cite{LeokZhang} and applying a Galerkin type of discretization; see also \cite{HolmTyranowskiSolitons}. In the present work we extend the ideas put forth in \cite{HolmTyranowskiGalerkin} to forced systems of the form \eqref{eq: Stochastic dissipative Hamiltonian system} and propose the corresponding Lagrange-d'Alembert variational integrators.

When the forcing terms in Eq.~\eqref{eq: Stochastic dissipative Hamiltonian system} are linear functions of the momentum variable $p$, then the stochastic flow of the system is conformally symplectic (see \cite{MilsteinQuasiSymplectic} and Section~\ref{sec:Conformal symplecticity and phase space volume}). Stochastic conformally symplectic integrators for such systems were proposed in \cite{BouRabeeOwhadiBoltzmannGibbs}, \cite{BouRabeeOwhadi2010}, and \cite{Hong2017}. Quasi-symplectic integrators were introduced in \cite{MilsteinQuasiSymplectic} and further studied in \cite{MilsteinErgodic}. These ideas are very interesting, but at present seem to be limited only to systems that exhibit a very special form, that is, systems with separable Hamiltonians, linear forcing terms, and additive noise. The stochastic Lagrange-d'Alembert variational integrators introduced in Section~\ref{sec:Stochastic Lagrange-d'Alembert variational integrators} are applicable to the general class of systems of the form \eqref{eq: Stochastic dissipative Hamiltonian system} and preserve their underlying variational structure.

\paragraph{Main content}
The main content of the remainder of this paper is, as follows. 
\begin{description}
\item
In Section~\ref{sec:Lagrange-d'Alembert principle for stochastic forced Hamiltonian systems} we introduce a stochastic Lagrange-d'Alembert principle and a stochastic generating function suitable for considering stochastic forced Hamiltonian systems, and we discuss their properties. 
\item
In Section~\ref{sec:Stochastic Lagrange-d'Alembert variational integrators} we present a general framework for constructing stochastic Lagrange-d'Alembert variational integrators, prove the discrete stochastic Lagrange-d'Alembert principle, propose mean-square and weak stochastic Lagrange-d'Alembert Runge-Kutta methods, and present several particularly interesting examples of low-stage schemes. We also discuss connections with the idea of quasi-symplectic integrators. 
\item
In Section~\ref{sec:Numerical experiments} we present the results of our numerical tests, which verify the excellent long-time performance of our integrators compared to some popular non-geometric methods. In particular, as one of the test cases we consider the Vlasov-Fokker-Planck equation, which is used as a model for collisional kinetic plasmas.
\item
Section~\ref{sec:Summary} contains the summary of our work.
\end{description}

\section{Lagrange-d'Alembert principle for stochastic forced Hamiltonian systems}
\label{sec:Lagrange-d'Alembert principle for stochastic forced Hamiltonian systems}

The stochastic variational integrators proposed in \cite{BouRabeeSVI} and \cite{BouRabeeConstrainedSVI} were formulated for dynamical systems which are described by a Lagrangian and which are subject to noise whose magnitude depends only on the position $q$. Therefore, these integrators can be extended to \eqref{eq: Stochastic dissipative Hamiltonian system} only if the Hamiltonian functions $h_i=h_i(q)$ are independent of $p$ and the Hamiltonian $H$ is non-degenerate (i.e., the associated Legendre transform is invertible). However, in the case of general $h_i=h_i(q,p)$ the paths $q(t)$ of the system become almost surely nowhere differentiable, which poses a difficulty in interpreting the meaning of the corresponding Lagrangian. To avoid these kind of issues, in \cite{HolmTyranowskiGalerkin} an action functional based on a phase space Lagrangian was introduced, and variational integrators for unforced Hamiltonian systems were constructed. In the present work we extend the approach taken in \cite{HolmTyranowskiGalerkin} to include forced Hamiltonian systems. To begin, in the next section, we will introduce an appropriate stochastic action functional and show that it can be used to define a type-II generating function for the stochastic flow of the system \eqref{eq: Stochastic dissipative Hamiltonian system}.

\subsection{Stochastic Lagrange-d'Alembert principle}
\label{sec:Stochastic Lagrange-d'Alembert principle}

Let the Hamiltonian functions $H: T^*Q \longrightarrow \mathbb{R}$ and $h_i: T^*Q \longrightarrow \mathbb{R}$ for $i=1,\ldots,m$ be defined on the cotangent bundle $T^*Q$ of the configuration manifold $Q$, and let $(q,p)$ denote the canonical coordinates on $T^*Q$. The Hamiltonian forces $F: T^*Q \longrightarrow T^*Q$ and $f_i: T^*Q \longrightarrow T^*Q$ for $i=1,\ldots,m$ are fiber-preserving mappings with the coordinate representations $F(q,p)= (q, F(q,p))$ and $f_i(q,p)= (q, f_i(q,p))$, respectively, where by a slight abuse of notation we use the same symbol to denote the force and its local representation. For simplicity, in this work we assume that the configuration manifold has a vector space structure, $Q \cong \mathbb{R}^N$, so that $T^*Q = Q \times Q^* \cong \mathbb{R}^N \times \mathbb{R}^N$ and $TQ = Q \times Q \cong \mathbb{R}^N \times \mathbb{R}^N$. In this case, the natural pairing between one-forms and vectors can be identified with the scalar product on $\mathbb{R}^N$, that is, $\langle (q,p), (q,\dot q) \rangle = p\cdot \dot q$, where $(q,\dot q)$ denotes the coordinates on $TQ$. Let $(\Omega, \mathcal{F},\mathbb{P})$ be the probability space with the filtration $\{\mathcal{F}_t\}_{t \geq 0}$, and let $W(t)=(W^1(t),\ldots,W^m(t))$ denote a standard $m$-dimensional Wiener process on that probability space (such that $W(t)$ is $\mathcal{F}_t$-measurable). We will assume that the Hamiltonian functions and the forcing terms are sufficiently smooth and satisfy all the necessary conditions for the existence and uniqueness of solutions to \eqref{eq: Stochastic dissipative Hamiltonian system}, and their extendability to a given time interval $[t_a, t_b]$ with $t_b > t_a \geq 0$. One possible set of such assumptions can be formulated by considering the It\^o form of~\eqref{eq: Stochastic dissipative Hamiltonian system},

\begin{equation}
\label{eq: Ito form of the stochastic Hamiltonian system}
d_t z = A(z) dt + B(z)dW(t),
\end{equation}

\noindent
with $z=(q,p)$ and

\begin{align}
\label{eq: Ito coefficients}
A(z) =
\begin{pmatrix}
 \phantom{-}\frac{\partial H}{\partial p}+\frac{1}{2}\sum_{i=1}^m\Big[ \frac{\partial^2 h_i}{\partial p \partial q} \frac{\partial h_i}{\partial p} + \frac{\partial^2 h_i}{\partial p^2} \Big( f_i -\frac{\partial h_i}{\partial q} \Big) \Big ]  \\
-\frac{\partial H}{\partial q}+F+\frac{1}{2}\sum_{i=1}^m\Big[ \Big(\frac{\partial^2 h_i}{\partial q \partial p} - \frac{\partial f_i}{\partial p} \Big) \Big(\frac{\partial h_i}{\partial p}-f_i \Big) - \Big( \frac{\partial^2 h_i}{\partial q^2}-\frac{\partial f_i}{\partial q} \Big) \frac{\partial h_i}{\partial p} \Big ]
\end{pmatrix},
\qquad\quad B(z) =
\begin{pmatrix}
 \phantom{-}\big(\frac{\partial h}{\partial p}\big)^T \\
-\big(\frac{\partial h}{\partial q}\big)^T+f
\end{pmatrix},
\end{align}

\noindent
where $\partial^2 h_i/\partial q^2$, $\partial^2 h_i/\partial p^2$, and $\partial^2 h_i/\partial q \partial p$ denote the Hessian matrices of $h_i$, whereas $\partial h/\partial q$, $\partial h/\partial p$, $\partial f_i/\partial q$, and $\partial f_i/\partial p$ denote the Jacobian matrices of $h=(h_1,\ldots,h_m)$ and $f_i$, respectively, and the $n \times m$ forcing matrix $f$ is defined as $f=(f_1, \ldots, f_m)$. For simplicity and clarity of the exposition, throughout this paper we assume that (see \cite{ArnoldSDE}, \cite{IkedaWatanabe1989}, \cite{KloedenPlatenSDE}, \cite{Kunita1997})

\begin{itemize}
	\item[(H1)] $H$ and $h_i$ for $i=1,\ldots,m$ are $C^2$ functions of their arguments,
	\item[(H2)] $F$ and $f_i$ for $i=1,\ldots,m$ are $C^1$ functions of their arguments,
	\item[(H3)] $A$ and $B$ are globally Lipschitz.
\end{itemize}

\noindent
These assumptions are sufficient for our purposes, but could be relaxed if necessary. Define the space

\begin{equation}
\label{eq:Definition of the q space}
C([t_a, t_b]) = \big\{ (q,p):\Omega \times [t_a, t_b] \longrightarrow T^*Q  \, \big| \, \text{$q$, $p$ are almost surely continuous $\mathcal{F}_t$-adapted semimartingales} \big\}.
\end{equation}

\noindent
Since we assume $T^*Q \cong \mathbb{R}^N \times \mathbb{R}^N$, the space $C([t_a, t_b])$ is a vector space (see \cite{ProtterStochastic}). Therefore, we can identify the tangent space $TC([t_a, t_b]) \cong C([t_a, t_b])\times C([t_a, t_b])$. We can now define the following stochastic action functional, $\mathcal{B}: \Omega \times C([t_a, t_b]) \longrightarrow \mathbb{R}$,

\begin{equation}
\label{eq:Stochastic action functional}
\mathcal{B}\big[q(\cdot),p(\cdot) \big] = p(t_b)q(t_b) - \int_{t_a}^{t_b} \Big[ p\circ d_t q - H\big(q(t),p(t)\big)\,dt - \sum_{i=1}^m h_i\big(q(t),p(t)\big)\circ dW^i(t)\Big],
\end{equation}

\noindent
where $\circ$ denotes Stratonovich integration, and we have omitted writing the elementary events $\omega \in \Omega$ as arguments of functions, following the standard convention in stochastic analysis. For a given curve $\big( q(t), p(t) \big)$ in $T^*Q$ and its arbitrary variation $\big(\delta q(t), \delta p(t) \big)$, we define the corresponding variation of the action functional as

\begin{equation}
\label{eq:Definition of the variation of the action functional}
\delta \mathcal{B}\big[q(\cdot), p(\cdot) \big] \equiv \frac{d}{d\epsilon} \bigg|_{\epsilon=0}\mathcal{B}\big[q(\cdot)+\epsilon \delta q(\cdot), p(\cdot)+\epsilon \delta p(\cdot) \big].
\end{equation}

\begin{thm}[{\bf Stochastic Lagrange-d'Alembert Principle in Phase Space}]
\label{thm:Stochastic Lagrange-d'Alembert Principle}
Suppose that $H(q,p)$, $F(q,p)$, and $h_i(q,p)$, $f_i(q,p)$ for $i=1,\ldots,m$ satisfy conditions (H1)-(H3). If the curve $\big( q(t), p(t) \big)$ in $T^*Q$ satisfies the stochastic forced Hamiltonian system \eqref{eq: Stochastic dissipative Hamiltonian system} for $t\in [t_a,t_b]$, where $t_b \geq t_a >0$, then it also satisfies the integral equation

\begin{equation}
\label{eq:Stochastic Lagrange-d'Alembert Principle}
\delta \mathcal{B}\big[q(\cdot), p(\cdot) \big] - \int_{t_a}^{t_b} F\big(q(t),p(t)\big)\cdot \delta q(t)\,dt - \sum_{i=1}^m \int_{t_a}^{t_b} f_i\big(q(t),p(t)\big)\cdot \delta q(t)\circ dW^i(t)=0\,,
\end{equation} 

\noindent
almost surely for all variations $\big(\delta q(\cdot), \delta p(\cdot) \big) \in C([t_a, t_b])$ such that almost surely $\delta q(t_a)=0$ and $\delta p(t_b)=0$.
\end{thm}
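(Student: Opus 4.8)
The plan is to compute the variation $\delta\mathcal{B}$ directly from the definition \eqref{eq:Definition of the variation of the action functional}, integrate by parts to expose the coefficients multiplying $\delta q$ and $\delta p$, and then substitute the equations of motion \eqref{eq: Stochastic dissipative Hamiltonian system} to see that everything cancels. Since the statement is only the implication ``solution $\Rightarrow$ integral equation,'' I do not need the fundamental lemma of the calculus of variations in its converse form; it suffices to verify that the left-hand side of \eqref{eq:Stochastic Lagrange-d'Alembert Principle} vanishes almost surely once the curve is assumed to solve the system.

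First I would differentiate $\mathcal{B}\big[q+\epsilon\delta q, p+\epsilon\delta p\big]$ with respect to $\epsilon$ at $\epsilon=0$. Under hypotheses (H1)--(H3) the integrands depend smoothly on $\epsilon$ and the variations $(\delta q,\delta p)$ are fixed continuous semimartingales in $C([t_a,t_b])$, so I can differentiate under the stochastic integral sign. This produces the boundary contribution $\delta p(t_b)\,q(t_b) + p(t_b)\,\delta q(t_b)$ coming from the term $p(t_b)q(t_b)$, together with integrals involving $\delta p\circ d_t q$, $p\circ d_t\delta q$, and the partial derivatives $\partial H/\partial q$, $\partial H/\partial p$, $\partial h_i/\partial q$, $\partial h_i/\partial p$ paired against $\delta q$ and $\delta p$. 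The boundary condition $\delta p(t_b)=0$ kills the first boundary term.

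The crucial step is the integration by parts applied to $\int_{t_a}^{t_b} p\circ d_t\delta q$. Because Stratonovich integration obeys the ordinary Leibniz rule, the product formula $d_t(p\cdot\delta q)=\delta q\circ d_t p + p\circ d_t\delta q$ holds with no It\^o correction, giving $\int_{t_a}^{t_b} p\circ d_t\delta q = p(t_b)\delta q(t_b) - p(t_a)\delta q(t_a) - \int_{t_a}^{t_b}\delta q\circ d_t p$. Using $\delta q(t_a)=0$, the surviving boundary term $p(t_b)\delta q(t_b)$ cancels exactly against the one isolated above. Collecting the remaining integrals by the factors $\delta p$ and $\delta q$ and subtracting the two forcing contributions in \eqref{eq:Stochastic Lagrange-d'Alembert Principle}, I expect the coefficient multiplying $\delta p$ to be $-\,d_t q + (\partial H/\partial p)\,dt + \sum_i(\partial h_i/\partial p)\circ dW^i$, and the coefficient multiplying $\delta q$ to be $d_t p + (\partial H/\partial q - F)\,dt + \sum_i(\partial h_i/\partial q - f_i)\circ dW^i$.

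Finally I would substitute the system \eqref{eq: Stochastic dissipative Hamiltonian system}: its first equation makes the $\delta p$-coefficient vanish identically as a stochastic differential, and its second equation makes the $\delta q$-coefficient vanish. Hence both integrals are zero almost surely and \eqref{eq:Stochastic Lagrange-d'Alembert Principle} holds for every admissible variation. The main technical obstacle is justifying the interchange of $d/d\epsilon$ with the Stratonovich integrals and the Leibniz-rule integration by parts for the semimartingale pairing $p\cdot\delta q$; both are standard consequences of Stratonovich calculus given the continuity and $\mathcal{F}_t$-adaptedness built into $C([t_a,t_b])$, which is precisely why the action \eqref{eq:Stochastic action functional} is written in Stratonovich rather than It\^o form.
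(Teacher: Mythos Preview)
Your proposal is correct and follows essentially the same route as the paper's proof: compute $\delta\mathcal{B}$ by differentiating under the Stratonovich integrals, apply the Stratonovich integration-by-parts formula to $\int p\circ d_t\delta q$, use the endpoint conditions to kill the boundary terms, and then observe that the resulting $\delta q$- and $\delta p$-coefficients vanish by virtue of \eqref{eq: Stochastic dissipative Hamiltonian system}. The only place the paper is slightly more explicit than your sketch is in the final substitution step: rather than simply asserting that the stochastic differential vanishes, it decomposes $q(t)$ into its semimartingale summands and invokes the associativity property of the Stratonovich integral to justify that $\int\delta p\circ d_t q$ equals $\int\delta p\,(\partial H/\partial p)\,dt+\sum_i\int\delta p\,(\partial h_i/\partial p)\circ dW^i$; this is the rigorous content behind your phrase ``makes the $\delta p$-coefficient vanish identically as a stochastic differential.''
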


\begin{proof}
Let the curve $\big( q(t), p(t) \big)$ in $T^*Q$ satisfy \eqref{eq: Stochastic dissipative Hamiltonian system} for $t\in [t_a,t_b]$. It then follows that the stochastic processes $q(t)$ and $p(t)$ are almost surely continuous, $\mathcal{F}_t$-adapted semimartingales, that is, $\big( q(\cdot), p(\cdot) \big) \in C([t_a, t_b])$ (see \cite{ArnoldSDE}, \cite{ProtterStochastic}). We calculate the variation \eqref{eq:Definition of the variation of the action functional} as

\begin{align}
\label{eq:Calculating delta B}
\delta \mathcal{B}\big[q(\cdot), p(\cdot) \big] &= p(t_b)\delta q(t_b) - \int_{t_a}^{t_b} p(t)\circ d_t \delta q(t) - \int_{t_a}^{t_b} \delta p(t)\circ d_t q(t) \nonumber \\
                                       &\phantom{=} + \int_{t_a}^{t_b} \bigg[ \frac{\partial H}{\partial q}\big(q(t),p(t)\big)\,\delta q(t) + \frac{\partial H}{\partial p}\big(q(t),p(t)\big)\,\delta p(t) \bigg]\,dt \nonumber \\
                                       &\phantom{=} + \sum_{i=1}^m \int_{t_a}^{t_b} \bigg[ \frac{\partial h_i}{\partial q}\big(q(t),p(t)\big)\,\delta q(t) + \frac{\partial h_i}{\partial p}\big(q(t),p(t)\big)\,\delta p(t) \bigg]\circ dW^i(t),
\end{align}

\noindent
where we have used the end point condition, $\delta p(t_b)=0$. Since the Hamiltonians are $C^2$ and the processes $q(t)$, $p(t)$ are almost surely continuous, in the last two lines we have used a dominated convergence argument to interchange differentiation with respect to $\epsilon$ and integration with respect to $t$ and $W(t)$. Upon applying the integration by parts formula for semimartingales (see \cite{ProtterStochastic}), we find

\begin{equation}
\label{eq:Integration by parts for semimartingales}
\int_{t_a}^{t_b} p(t)\circ d_t \delta q(t) = p(t_b)\delta q(t_b) -  p(t_a)\delta q(t_a) - \int_{t_a}^{t_b} \delta q(t)\circ d_t p(t).
\end{equation}

\noindent
Substituting and rearranging terms produces,

\begin{align}
\label{eq:Calculating delta B continued}
\delta \mathcal{B}\big[q(\cdot), p(\cdot) \big] &= \int_{t_a}^{t_b} \delta q(t) \bigg[\circ d_t p(t) + \frac{\partial H}{\partial q}\big(q(t),p(t)\big)\,dt + \sum_{i=1}^m \frac{\partial h_i}{\partial q}\big(q(t),p(t)\big)\circ dW^i(t) \bigg] \nonumber \\
&- \int_{t_a}^{t_b} \delta p(t) \bigg[\circ d_t q(t) - \frac{\partial H}{\partial p}\big(q(t),p(t)\big)\,dt - \sum_{i=1}^m \frac{\partial h_i}{\partial p}\big(q(t),p(t)\big)\circ dW^i(t) \bigg],
\end{align}

\noindent
where we have used $\delta q(t_a)=0$. Therefore, we have

\begin{align}
\label{eq: Calculating the Lagrange-d'Alembert terms}
&\delta \mathcal{B}\big[q(\cdot), p(\cdot) \big] - \int_{t_a}^{t_b} F\big(q(t),p(t)\big)\cdot \delta q(t)\,dt - \sum_{i=1}^m \int_{t_a}^{t_b} f_i\big(q(t),p(t)\big)\cdot \delta q(t)\circ dW^i(t) \nonumber \\
&= \underbrace{\int_{t_a}^{t_b} \delta q(t) \Bigg[\circ d_t p(t) + \bigg(\frac{\partial H}{\partial q}\big(q(t),p(t)\big)-F\big(q(t),p(t)\big) \bigg)\,dt + \sum_{i=1}^m \bigg( \frac{\partial h_i}{\partial q}\big(q(t),p(t)\big)-f_i\big(q(t),p(t)\big) \bigg)\circ dW^i(t) \Bigg]}_{A} \nonumber \\
&- \underbrace{\int_{t_a}^{t_b} \delta p(t) \bigg[\circ d_t q(t) - \frac{\partial H}{\partial p}\big(q(t),p(t)\big)\,dt - \sum_{i=1}^m \frac{\partial h_i}{\partial p}\big(q(t),p(t)\big)\circ dW^i(t) \bigg]}_{B}.
\end{align}

\noindent
Since $\big( q(t), p(t) \big)$ satisfy \eqref{eq: Stochastic dissipative Hamiltonian system}, then by definition we have that almost surely for all $t \in [t_a,t_b]$,

\begin{equation}
\label{eq: Integral form of the solution of the stochastic Hamiltonian system}
q(t) = q(t_a) + \underbrace{\int_{t_a}^t \frac{\partial H}{\partial p}(q(s),p(s)) \,ds}_{M_0(t)} + \sum_{i=1}^m \underbrace{\int_{t_a}^t \frac{\partial h_i}{\partial p}(q(s),p(s)) \circ dW^i(s)}_{M_i(t)},
\end{equation}

\noindent
that is, $q(t)$ can be represented as the sum of the semi-martingales $M_i(t)$ for $i=0,\ldots,m$, where the sample paths of the process $M_0(t)$ are almost surely continuously differentiable. Let us calculate

\begin{align}
\label{eq: One of the integrals in delta B}
\int_{t_a}^{t_b} \delta p(t)\circ d_t q(t) &= \int_{t_a}^{t_b} \delta p(t)\circ d_t\Big(q(t_a)+M_0(t)+\sum_{i=1}^m M_i(t)\Big) \nonumber \\
                                        &= \int_{t_a}^{t_b} \delta p(t)\circ d_t M_0(t) + \sum_{i=1}^m \int_{t_a}^{t_b} \delta p(t)\circ d_t M_i(t) \nonumber \\
																				&= \int_{t_a}^{t_b} \delta p(t)\frac{\partial H}{\partial p}(q(t),p(t)) \,dt + \sum_{i=1}^m \int_{t_a}^{t_b} \delta p(t) \frac{\partial h_i}{\partial p}(q(t),p(t)) \circ dW^i(t),
\end{align}

\noindent
where in the last equality we have used the standard property of the Riemann-Stieltjes integral for the first term, as $M_0(t)$ is almost surely differentiable, and the associativity property of the Stratonovich integral for the second term (see \cite{ProtterStochastic}, \cite{IkedaWatanabe1989}). Substituting \eqref{eq: One of the integrals in delta B} in the term $B$ of \eqref{eq: Calculating the Lagrange-d'Alembert terms}, we show that $B=0$. By a similar argument we also prove that $A=0$. Therefore, the left-hand side of \eqref{eq: Calculating the Lagrange-d'Alembert terms} is equal to zero, almost surely.\\
\end{proof}

\noindent
{\bf Remark:} It is natural to expect that the converse theorem, that is, if $\big( q(\cdot), p(\cdot) \big)$ satisfy the integral principle \eqref{eq:Stochastic Lagrange-d'Alembert Principle}, then the curve $\big( q(t), p(t) \big)$ is a solution to \eqref{eq: Stochastic dissipative Hamiltonian system}, should also hold, although a larger class of variations $(\delta q, \delta p)$ may be necessary. Variants of such a theorem for systems without forcing have been proved in L\'azaro-Cam\'i \& Ortega \cite{LaCa-Or2008} and Bou-Rabee \& Owhadi \cite{BouRabeeSVI}. We leave this as an open question. Here, we will use the action functional \eqref{eq:Stochastic action functional} and the Lagrange-d'Alembert principle \eqref{eq:Stochastic Lagrange-d'Alembert Principle} to construct numerical schemes, and we will directly verify that these numerical schemes converge to solutions of \eqref{eq: Stochastic dissipative Hamiltonian system}.\\

\subsection{Stochastic type-II generating function and forcing}
\label{sec:Stochastic type-II generating function and forcing}

When the functions $H(q,p)$, $F(q,p)$, $h_i(q,p)$, and $f_i(q,p)$ satisfy standard measurability and regularity conditions (e.g., (H1)-(H3)), then the system \eqref{eq: Stochastic dissipative Hamiltonian system} possesses a pathwise unique stochastic flow $F_{t,t_0}: \Omega \times T^*Q \longrightarrow T^*Q$. It can be proved that for fixed $t,t_0$ this flow is mean-square differentiable with respect to the $q$, $p$ arguments, and is also almost surely a diffeomorphism (see \cite{ArnoldSDE}, \cite{IkedaWatanabe1989}, \cite{KloedenPlatenSDE}, \cite{Kunita1997}). We will show below that the action functional \eqref{eq:Stochastic action functional} can be used to construct a type II generating function for $F_{t,t_0}$. Let $(\bar q(t), \bar p(t))$ be a particular solution of \eqref{eq: Stochastic dissipative Hamiltonian system} on $[t_a,t_b]$. Suppose that for almost all $\omega \in \Omega$ there is an open neighborhood $\mathcal{U}(\omega)\subset Q$ of $\bar q(\omega,t_a)$, an open neighborhood $\mathcal{V}(\omega)\subset Q^*$ of $\bar p(\omega,t_b)$, and an open neighborhood $\mathcal{W}(\omega)\subset T^*Q$ of the curve $(\bar q(\omega,t), \bar p(\omega,t))$ such that for all $q_a \in \mathcal{U}(\omega)$ and $p_b \in \mathcal{V}(\omega)$ there exists a pathwise unique solution $(\bar q(\omega,t; q_a, p_b), \bar p(\omega,t; q_a, p_b))$ of \eqref{eq: Stochastic dissipative Hamiltonian system} which satisfies $\bar q(\omega,t_a; q_a, p_b)=q_a$, $\bar p(\omega,t_b; q_a, p_b)=p_b$, and $(\bar q(\omega,t; q_a, p_b), \bar p(\omega,t; q_a, p_b)) \in \mathcal{W}(\omega)$ for $t_a\leq t \leq t_b$. (As in the deterministic case, for $t_b$ sufficiently close to $t_a$ one can argue that such neighborhoods exist; see \cite{MarsdenRatiuSymmetry}.) Define the function $S:\mathcal{Y} \longrightarrow \mathbb{R}$ as

\begin{equation}
\label{eq:Definition of the generating function}
S(q_a,p_b) = \mathcal{B}\big[\bar q(\cdot;q_a,p_b), \bar p(\cdot; q_a,p_b) \big],
\end{equation}

\noindent
where the domain $\mathcal{Y} \subset \Omega \times Q \times Q^*$ is given by $\mathcal{Y} = \bigcup\limits_{\omega \in \Omega} \{\omega \} \times \mathcal{U}(\omega) \times \mathcal{V}(\omega)$. Define further the two functions $F^\pm:\mathcal{Y} \longrightarrow \mathbb{R}^N$ as

\begin{align}
\label{eq:Definition of the exact forcing}
F^-(q_a,p_b) &= \int_{t_a}^{t_b} \bigg( \frac{\partial \bar q(t;q_a,p_b)}{\partial q_a} \bigg)^T \Big[ F\big(\bar q(t;q_a,p_b), \bar p(t; q_a,p_b)\big)\,dt + \sum_{i=1}^m f_i\big(\bar q(t;q_a,p_b), \bar p(t; q_a,p_b)\big)\circ dW^i(t) \Big], \nonumber \\
F^+(q_a,p_b) &= \int_{t_a}^{t_b} \bigg( \frac{\partial \bar q(t;q_a,p_b)}{\partial p_b} \bigg)^T \Big[ F\big(\bar q(t;q_a,p_b), \bar p(t; q_a,p_b)\big)\,dt + \sum_{i=1}^m f_i\big(\bar q(t;q_a,p_b), \bar p(t; q_a,p_b)\big)\circ dW^i(t) \Big].
\end{align}

\noindent
Below we prove that the functions $S$ and $F^\pm$ generate\footnote{A generating function for the transformation $(q_a,p_a)\longrightarrow (q_b,p_b)$ is a function of one of the variables $(q_a,p_a)$ and one of the variables $(q_b,p_b)$. Therefore, there are four basic types of generating functions: $S=S_1(q_a, q_b)$, $S=S_2(q_a, p_b)$, $S=S_3(p_a, q_b)$, and $S=S_4(p_a, p_b)$. In this work we use the type-II generating function $S=S_2(q_a, p_b)$.} the stochastic flow $F_{t_b,t_a}$. 

\begin{thm}
\label{thm:S and F generate the Hamiltonian flow}
The function $S(q_a,p_b)$ is a type-II stochastic generating function and the functions $F^\pm(q_a,p_b)$ are type-II stochastic exact discrete forces for the stochastic mapping $F_{t_b,t_a}$, that is, $F_{t_b,t_a}:(q_a,p_a)\longrightarrow (q_b,p_b)$ is implicitly given by the equations

\begin{equation}
\label{eq:Equations generating the flow of the Hamiltonian system}
q_b = D_2 S(q_a,p_b)-F^+(q_a,p_b), \qquad\qquad p_a = D_1 S(q_a,p_b)-F^-(q_a,p_b),
\end{equation}

\noindent
where the derivatives are understood in the mean-square sense.
\end{thm}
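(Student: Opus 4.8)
The plan is to differentiate $S(q_a,p_b)=\mathcal{B}[\bar q(\cdot;q_a,p_b),\bar p(\cdot;q_a,p_b)]$ along the parametrized family of exact solutions and to read off the two generating relations by matching the coefficients of the endpoint variations. First I would recompute the variation $\delta\mathcal{B}$ exactly as in the proof of Theorem~\ref{thm:Stochastic Lagrange-d'Alembert Principle}, but retaining the endpoint contributions this time (that is, \emph{without} imposing $\delta q(t_a)=0$ or $\delta p(t_b)=0$): the variation of the $p(t_b)q(t_b)$ term supplies $q(t_b)\,\delta p(t_b)$, and the integration-by-parts formula \eqref{eq:Integration by parts for semimartingales} now supplies the surviving boundary term $p(t_a)\,\delta q(t_a)$. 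After subtracting the Lagrange--d'Alembert forcing terms, the remaining bulk integrals are precisely $A$ and $B$ from \eqref{eq: Calculating the Lagrange-d'Alembert terms}.

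Since $(\bar q,\bar p)$ solves \eqref{eq: Stochastic dissipative Hamiltonian system}, the semimartingale-decomposition argument of Theorem~\ref{thm:Stochastic Lagrange-d'Alembert Principle} (writing $q$ as a sum of the $M_i$ and invoking associativity of the Stratonovich integral) forces $A=B=0$, so that
\begin{equation*}
\delta\mathcal{B} - \int_{t_a}^{t_b} F\cdot\delta q\,dt - \sum_{i=1}^m\int_{t_a}^{t_b} f_i\cdot\delta q\circ dW^i = q(t_b)\,\delta p(t_b) + p(t_a)\,\delta q(t_a).
\end{equation*}

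Next I would specialize the endpoint variation in two ways. Varying $q_a$ with $p_b$ held fixed induces $\delta q(t)=(\partial\bar q/\partial q_a)\,\delta q_a$ with $\delta q(t_a)=\delta q_a$ and $\delta p(t_b)=0$; since $\delta q_a$ is independent of $t$, the two forcing integrals collapse to $F^-(q_a,p_b)\cdot\delta q_a$ by definition \eqref{eq:Definition of the exact forcing}, while the left-hand side equals $(D_1 S-F^-)\cdot\delta q_a$ and the right-hand side equals $p_a\cdot\delta q_a$ with $p_a=p(t_a)$. As $\delta q_a$ is arbitrary, this yields $p_a=D_1 S-F^-$. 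Varying $p_b$ with $q_a$ fixed gives $\delta q(t)=(\partial\bar q/\partial p_b)\,\delta p_b$ with $\delta q(t_a)=0$ and $\delta p(t_b)=\delta p_b$, the forcing integrals collapse to $F^+(q_a,p_b)\cdot\delta p_b$, and matching against $q(t_b)\,\delta p_b=q_b\cdot\delta p_b$ gives $q_b=D_2 S-F^+$, which together are exactly \eqref{eq:Equations generating the flow of the Hamiltonian system}.

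The main obstacle is justifying these formal variations in the mean-square sense. The substitution $\delta q(t)=(\partial\bar q/\partial q_a)\,\delta q_a$ rests on mean-square differentiability of the stochastic flow with respect to its endpoint data, which holds under (H1)--(H3) by the cited results; the delicate point is interchanging the mean-square derivative with the time and Stratonovich integrals defining $\mathcal{B}$. I would address this by forming difference quotients of $\mathcal{B}$, bounding them uniformly in $L^2$ using the $C^2$ regularity of $H,h_i$, the $C^1$ regularity of $F,f_i$, and moment estimates for the flow derivatives, and then passing to the $L^2$ limit via a dominated-convergence argument analogous to the one already invoked after \eqref{eq:Calculating delta B}.
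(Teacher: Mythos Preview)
Your proposal is correct and follows essentially the same route as the paper: the paper directly differentiates $S$ with respect to $q_a$ (resp.\ $p_b$), writes out the resulting Jacobian terms, integrates by parts via \eqref{eq:Integration by parts for semimartingales 2}, and uses the fact that $(\bar q,\bar p)$ solves \eqref{eq: Stochastic dissipative Hamiltonian system} to reduce the bulk integrals to the forcing contribution $F^-$ (resp.\ $F^+$), which is exactly your computation recast in the language of specializing the free-endpoint variation formula from Theorem~\ref{thm:Stochastic Lagrange-d'Alembert Principle}. The technical justification you outline (mean-square differentiability of the flow and a dominated-convergence argument for interchanging derivative and Stratonovich integral) is likewise what the paper invokes by citation.
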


\begin{proof}
Under appropriate regularity assumptions on the Hamiltonians and forces (e.g., (H1)-(H3)), the solutions $\bar q(t; q_a, p_b)$ and $\bar p(t; q_a, p_b)$ are mean-square differentiable with respect to the parameters $q_a$ and $p_b$, and the partial derivatives are semimartingales (see \cite{ArnoldSDE}). We calculate the derivative of $S$ as

\begin{align}
\label{eq:Calculate derivative of S}
\frac{\partial S}{\partial q_a}(q_a,p_b) &= \bigg( \frac{\partial \bar q(t_b)}{\partial q_a} \bigg)^T p_b - \int_{t_a}^{t_b} \bigg( \frac{\partial \bar p(t)}{\partial q_a} \bigg)^T \circ d_t \bar q(t)  - \int_{t_a}^{t_b} d_t \bigg( \frac{\partial \bar q(t)}{\partial q_a} \bigg)^T \circ \bar p(t) \nonumber \\
&\phantom{=} + \int_{t_a}^{t_b} \bigg[ \bigg(\frac{\partial \bar q(t)}{\partial q_a} \bigg)^T \frac{\partial H}{\partial q} \big(\bar q(t), \bar p(t) \big) + \bigg(\frac{\partial \bar p(t)}{\partial q_a}\bigg)^T \frac{\partial H}{\partial p} \big(\bar q(t), \bar p(t) \big) \bigg]\,dt \nonumber \\
&\phantom{=} + \sum_{i=1}^m \int_{t_a}^{t_b} \bigg[ \bigg(\frac{\partial \bar q(t)}{\partial q_a}\bigg)^T \frac{\partial h_i}{\partial q} \big(\bar q(t), \bar p(t) \big) + \bigg(\frac{\partial \bar p(t)}{\partial q_a}\bigg)^T \frac{\partial h_i}{\partial p} \big(\bar q(t), \bar p(t) \big) \bigg]\circ dW^i(t),
\end{align}

\noindent
where for notational convenience we have omitted writing $q_a$ and $p_b$ explicitly as arguments of $\bar q(t)$ and $\bar p(t)$. Applying the integration by parts formula for semimartingales (see \cite{ProtterStochastic}), we find

\begin{equation}
\label{eq:Integration by parts for semimartingales 2}
\int_{t_a}^{t_b} d_t \bigg( \frac{\partial \bar q(t)}{\partial q_a} \bigg)^T \circ \bar p(t) = \bigg( \frac{\partial \bar q(t_b)}{\partial q_a} \bigg)^T p_b - \bar p(t_a) - \int_{t_a}^{t_b} \bigg( \frac{\partial \bar q(t)}{\partial q_a} \bigg)^T \circ d_t \bar p(t), 
\end{equation}

\noindent
where the left-hand side integral is understood as a column vector with the components given by

\begin{equation}
\label{eq:Components of the Stratonovich integral}
\sum_{j=1}^N\int_{t_a}^{t_b} \bar p^j(t) \circ d_t \frac{\partial \bar q^j(t)}{\partial q^i_a},
\end{equation}

\noindent
for each $i=1,\ldots,N$. Substituting and rearranging terms, we obtain

\begin{align}
\label{eq:Calculate derivative of S continued}
\frac{\partial S}{\partial q_a}(q_a,p_b) = \bar p(t_a) &+ \int_{t_a}^{t_b} \bigg(\frac{\partial \bar q(t)}{\partial q_a}\bigg)^T \bigg[\circ d_t \bar p +  \frac{\partial H}{\partial q} \big(\bar q(t), \bar p(t) \big)\,dt + \sum_{i=1}^m \frac{\partial h_i}{\partial q} \big(\bar q(t), \bar p(t) \big)\circ dW^i(t) \bigg] \nonumber \\
&+ \int_{t_a}^{t_b} \bigg( \frac{\partial \bar p(t)}{\partial q_a} \bigg)^T \bigg[\circ d_t \bar q -  \frac{\partial H}{\partial p} \big(\bar q(t), \bar p(t) \big)\,dt - \sum_{i=1}^m \frac{\partial h_i}{\partial p} \big(\bar q(t), \bar p(t) \big)\circ dW^i(t) \bigg] 
\nonumber\\&= \bar p(t_a) + \int_{t_a}^{t_b} \bigg( \frac{\partial \bar q(t)}{\partial q_a} \bigg)^T \Big[ F\big(\bar q(t), \bar p(t)\big)\,dt + \sum_{i=1}^m f_i\big(\bar q(t), \bar p(t)\big)\circ dW^i(t) \Big], \nonumber\\
&= \bar p(t_a) + F^-(q_a,p_b),
\end{align}

\noindent
since $(\bar q(t), \bar p(t))$ is a solution of \eqref{eq: Stochastic dissipative Hamiltonian system}. After performing similar manipulations for $\partial S / \partial p_b (q_a,p_b)$, together we obtain the result

\begin{equation}
\label{eq:Equations generating the flow of the Hamiltonian system in the proof}
\bar q(t_b) = D_2 S(q_a,p_b)-F^+(q_a,p_b), \qquad\qquad \bar p(t_a) = D_1 S(q_a,p_b)-F^-(q_a,p_b).
\end{equation}

\noindent
By definition of the flow, then $F_{t_b,t_a}(q_a, \bar p(t_a)) = (\bar q(t_b), p_b)$.

\end{proof}

\subsection{Noether's theorem for stochastic systems with forcing}
\label{sec:Stochastic forced Noether's theorem}

Let a Lie group $G$ act on $Q$ by the left action $\Phi:G \times Q \longrightarrow Q$. The Lie group $G$ then acts on $TQ$ and $T^*Q$ by the tangent $\Phi^{TQ}:G \times TQ \longrightarrow TQ$ and cotangent $\Phi^{T^*Q}:G \times T^*Q \longrightarrow T^*Q$ lift actions, respectively, given in coordinates by the formulas (see \cite{HolmGMS}, \cite{MarsdenRatiuSymmetry})

\begin{align}
\label{eq:Tangent and cotangent lift actions}
\Phi^{TQ}_g(q,\dot q) & \equiv \Phi^{TQ}\big(g,(q,\dot q)\big) =\bigg( \Phi^i_g(q),\frac{\partial \Phi^i_g}{\partial q^j}(q) \dot q^j \bigg), \nonumber \\
\Phi^{T^*Q}_g(q,p) & \equiv \Phi^{T^*Q}\big(g,(q,p)\big) =\bigg( \Phi^i_g(q),p_j \frac{\partial \Phi^j_{g^{-1}}}{\partial q^i}\big(\Phi_g(q)\big) \bigg),
\end{align}

\noindent
where $i,j=1,\ldots,N$ and summation is implied over repeated indices. Let $\mathfrak{g}$ denote the Lie algebra of $G$ and $\exp: \mathfrak{g} \longrightarrow G$ the exponential map (see \cite{HolmGMS}, \cite{MarsdenRatiuSymmetry}). Each element $\xi \in \mathfrak{g}$ defines the infinitesimal generators $\xi_Q$, $\xi_{TQ}$, and $\xi_{T^*Q}$, which are vector fields on $Q$, $TQ$, and $T^*Q$, respectively, given by

\begin{align}
\label{eq:Infinitesimal generators}
\xi_Q(q) = \frac{d}{d \lambda} \bigg|_{\lambda =0} \Phi_{\exp \lambda \xi}(q), \qquad \xi_{TQ}(q, \dot q) = \frac{d}{d \lambda} \bigg|_{\lambda =0} \Phi^{TQ}_{\exp \lambda \xi}(q,\dot q), \qquad \xi_{T^*Q}(q, p) = \frac{d}{d \lambda} \bigg|_{\lambda =0} \Phi^{T^*Q}_{\exp \lambda \xi}(q,p).
\end{align}

\noindent
The momentum map $J: T^*Q \longrightarrow \mathfrak{g}^*$ associated with the action $\Phi^{T^*Q}$ is defined as the mapping such that for all $\xi \in \mathfrak{g}$ the function $J_\xi: T^*Q \ni (q,p) \longrightarrow \langle J(q,p),\xi \rangle \in \mathbb{R}$ is the Hamiltonian for the infinitesimal generator $\xi_{T^*Q}$, i.e.,

\begin{align}
\label{eq:Momentum map definition}
\xi^q_{T^*Q} = \frac{\partial J_\xi}{\partial p}, \qquad \xi^p_{T^*Q} = -\frac{\partial J_\xi}{\partial q},
\end{align}

\noindent
where $\xi_{T^*Q}(q,p) = \big(q,p,\xi^q_{T^*Q}(q,p),\xi^p_{T^*Q}(q,p)\big)$. The momentum map $J$ can be explicitly expressed as (see \cite{HolmGMS}, \cite{MarsdenRatiuSymmetry})

\begin{align}
\label{eq:Momentum map formula}
J_\xi(q,p) = p\cdot \xi_Q(q).
\end{align}

Noether's theorem for deterministic Hamiltonian systems relates symmetries of the Hamiltonian to quantities preserved by the flow of the system (see \cite{HolmGMS}, \cite{MarsdenRatiuSymmetry}). When the Hamiltonian system is subject to external forces that are orthogonal to the infinitesimal generators of the symmetry group, then the corresponding momentum maps are still conserved (see \cite{MarsdenWestVarInt}). It turns out that this result carries over to the stochastic case, as well. A stochastic version of Noether's theorem for systems without forcing was proved in \cite{Bismut}, \cite{HolmTyranowskiGalerkin}, and \cite{LaCa-Or2008}. Below we state and provide a proof of Noether's theorem for stochastic forced Hamiltonian systems.

\begin{thm}[{\bf Noether's theorem for stochastic systems with forcing}]
\label{thm:Stochastic forced Noether's theorem}
Suppose that the Hamiltonians $H:T^*Q \longrightarrow \mathbb{R}$ and $h_i:T^*Q \longrightarrow \mathbb{R}$ for $i=1,\ldots,m$ are invariant with respect to the cotangent lift action $\Phi^{T^*Q}:G \times T^*Q \longrightarrow T^*Q$ of the Lie group $G$, that is,

\begin{align}
\label{eq:Invariance of the Hamiltonians}
H \circ \Phi^{T^*Q}_g = H, \qquad \qquad h_i \circ \Phi^{T^*Q}_g = h_i, \qquad \qquad \text{$i=1,\ldots,m$},
\end{align}

\noindent
for all $g \in G$. If the forcing terms are orthogonal to the infinitesimal generators of $G$, that is,

\begin{align}
\label{eq:Orthogonality of the forcing terms}
F(q,p)\cdot \xi_Q(q) = 0, \qquad \qquad f_i(q,p)\cdot \xi_Q(q) = 0, \qquad \qquad \text{$i=1,\ldots,m$},
\end{align}

\noindent
for all $\xi \in \mathfrak{g}$ and $(q,p) \in T^*Q$, then the cotangent lift momentum map $J:T^*Q \longrightarrow \mathfrak{g}^*$ associated with $\Phi^{T^*Q}$ is almost surely preserved along the solutions of the stochastic forced Hamiltonian system~\eqref{eq: Stochastic dissipative Hamiltonian system}.
\end{thm}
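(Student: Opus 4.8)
The plan is to show directly that for each fixed $\xi\in\mathfrak{g}$ the scalar process $J_\xi\big(q(t),p(t)\big)=p(t)\cdot\xi_Q\big(q(t)\big)$ is almost surely constant along any solution of \eqref{eq: Stochastic dissipative Hamiltonian system}; since $\xi$ is arbitrary, this yields almost sure preservation of the full momentum map $J$. The decisive structural feature is that \eqref{eq: Stochastic dissipative Hamiltonian system} is written in Stratonovich form, so the stochastic chain rule for $J_\xi$ carries no It\^o correction and the whole computation proceeds formally exactly as in the deterministic forced case.

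First I would extract the infinitesimal content of the invariance hypothesis \eqref{eq:Invariance of the Hamiltonians}. Differentiating $H\circ\Phi^{T^*Q}_{\exp\lambda\xi}=H$ with respect to $\lambda$ at $\lambda=0$ and substituting the definition \eqref{eq:Momentum map definition} of the momentum map (i.e. $\xi^q_{T^*Q}=\partial J_\xi/\partial p$ and $\xi^p_{T^*Q}=-\partial J_\xi/\partial q$) gives

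\begin{equation*}
\frac{\partial H}{\partial q}\cdot\frac{\partial J_\xi}{\partial p} - \frac{\partial H}{\partial p}\cdot\frac{\partial J_\xi}{\partial q}=0,
\end{equation*}

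and the analogous identity holds for each $h_i$. In other words, invariance of the Hamiltonians is equivalent to the vanishing of the canonical Poisson brackets $\{H,J_\xi\}=0$ and $\{h_i,J_\xi\}=0$, where $\{f,g\}=\frac{\partial f}{\partial q}\cdot\frac{\partial g}{\partial p}-\frac{\partial f}{\partial p}\cdot\frac{\partial g}{\partial q}$.

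Next I would apply the Stratonovich chain rule to $J_\xi\big(q(t),p(t)\big)$, which is legitimate since $\xi_Q$ is smooth (the group action is assumed smooth), so $J_\xi\in C^2$, and $(q,p)$ are continuous semimartingales:

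\begin{equation*}
d_t J_\xi = \frac{\partial J_\xi}{\partial q}\circ d_t q + \frac{\partial J_\xi}{\partial p}\circ d_t p.
\end{equation*}

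Substituting the equations of motion \eqref{eq: Stochastic dissipative Hamiltonian system} and collecting the $dt$ term together with each $\circ\, dW^i$ term, the coefficient of $dt$ reduces to $-\{H,J_\xi\}+\frac{\partial J_\xi}{\partial p}\cdot F$ and the coefficient of $\circ\, dW^i$ reduces to $-\{h_i,J_\xi\}+\frac{\partial J_\xi}{\partial p}\cdot f_i$. The Poisson-bracket pieces vanish by the infinitesimal invariance just derived. Using $\partial J_\xi/\partial p=\xi_Q$ from \eqref{eq:Momentum map formula}, the remaining pieces are $F\cdot\xi_Q$ and $f_i\cdot\xi_Q$, which vanish by the orthogonality hypothesis \eqref{eq:Orthogonality of the forcing terms}. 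Hence $d_t J_\xi=0$, so $J_\xi\big(q(t),p(t)\big)$ is almost surely constant on $[t_a,t_b]$.

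The main obstacle here is one of rigor rather than of idea: justifying the chain rule for a $C^2$ function of several continuous semimartingales and confirming that, in the Stratonovich convention, the second-order It\^o correction terms are absorbed so that the ordinary Leibniz/chain rule holds with no extra drift. Under (H1)--(H2) together with smoothness of $\xi_Q$, this is covered by standard It\^o--Stratonovich calculus (e.g.\ \cite{ProtterStochastic}, \cite{IkedaWatanabe1989}). An alternative, more variational, route would derive the same identity from the stochastic Lagrange--d'Alembert principle of Theorem~\ref{thm:Stochastic Lagrange-d'Alembert Principle}: taking the variation induced by the group action, $\delta q=\xi_Q(q)$ and $\delta p=\xi^p_{T^*Q}(q,p)$, the bulk terms cancel by invariance and the forcing terms drop by orthogonality, leaving only boundary contributions equal to the values of $J_\xi$ at $t_a$ and $t_b$; there the delicate point would be the bookkeeping of boundary terms, which do not vanish under a symmetry variation, so I would prefer the direct Stratonovich computation above.
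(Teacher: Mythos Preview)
Your proof is correct and follows essentially the same approach as the paper: both compute the Stratonovich differential of $J_\xi(q(t),p(t))$ along a solution, invoke the infinitesimal invariance of the Hamiltonians (which you phrase as $\{H,J_\xi\}=\{h_i,J_\xi\}=0$ and the paper phrases equivalently as $dH\cdot\xi_{T^*Q}=dh_i\cdot\xi_{T^*Q}=0$), and then kill the remaining forcing contributions using $\partial J_\xi/\partial p=\xi_Q$ together with the orthogonality hypothesis. The only difference is notational.
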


\begin{proof}
Equation \eqref{eq:Invariance of the Hamiltonians} implies that the Hamiltonians are infinitesimally invariant with respect to the action of $G$, that is, for all $\xi \in \mathfrak{g}$ we have 

\begin{align}
\label{eq:Infinitesimal invariance of the Hamiltonians}
dH\cdot \xi_{T^*Q} = 0, \qquad \qquad dh\cdot \xi_{T^*Q} = 0,
\end{align}

\noindent
where $dH$ and $dh$ denote differentials with respect to the variables $q$ and $p$. Let $(q(t),p(t))$ be a solution of \eqref{eq: Stochastic dissipative Hamiltonian system} and consider the stochastic process $J_\xi(q(t),p(t))$, where $\xi \in \mathfrak{g}$ is arbitrary. Using the rules of Stratonovich calculus we can calculate the stochastic differential

\begin{align}
\label{eq:Stochastic differential of J}
d_t J_\xi\big(q(t),p(t)\big)& =\frac{\partial J_\xi}{\partial q}(q(t),p(t))\circ d_t q(t) + \frac{\partial J_\xi}{\partial p}(q(t),p(t))\circ d_t p(t) \nonumber \\
                         & =\bigg( -\frac{\partial H}{\partial q} \xi^q_{T^*Q} - \frac{\partial H}{\partial p} \xi^p_{T^*Q} + F\cdot \xi^q_{T^*Q}  \bigg) \,dt +\sum_{i=1}^m \bigg( -\frac{\partial h_i}{\partial q} \xi^q_{T^*Q} - \frac{\partial h_i}{\partial p} \xi^p_{T^*Q} + f_i \cdot \xi^q_{T^*Q}  \bigg)\circ dW^i(t) \nonumber \\
                         & = \big(-dH\cdot \xi_{T^*Q} + F\cdot \xi^q_{T^*Q} \big)\,dt  + \sum_{i=1}^m\big(-dh_i\cdot \xi_{T^*Q} + f_i\cdot \xi^q_{T^*Q}\big)\circ dW^i(t) \nonumber \\
												& = F(q(t),p(t))\cdot \xi_Q (q(t))\,dt  + \sum_{i=1}^m f_i(q(t),p(t))\cdot \xi_Q(q(t))\circ dW^i(t),
\end{align}

\noindent
where we used \eqref{eq: Stochastic dissipative Hamiltonian system}, \eqref{eq:Momentum map definition}, \eqref{eq:Momentum map formula}, and \eqref{eq:Infinitesimal invariance of the Hamiltonians}. Therefore, if \eqref{eq:Orthogonality of the forcing terms} holds, then $J_\xi\big(q(t),p(t)\big) = \text{const}$ almost surely for all $\xi \in \mathfrak{g}$, which completes the proof.\\
\end{proof}

\paragraph{Remark.} When the external forces are not all orthogonal to the infinitesimal generators of the symmetry group, formula \eqref{eq:Stochastic differential of J} provides the rate of change of the momentum map.

\subsection{Conformal symplecticity and phase space volume}
\label{sec:Conformal symplecticity and phase space volume}

The flow $F_{t,t_0}$ for stochastic Hamiltonian systems without forcing almost surely preserves the canonical symplectic two-form 

\begin{equation}
\label{eq:Symplectic form}
\Omega_{T^*Q} = dq \wedge dp = \sum_{i=1}^N dq^i \wedge dp^i, 
\end{equation}

\noindent
that is, $F^*_{t,t_0} \Omega_{T^*Q} = \Omega_{T^*Q}$, where $F^*_{t,t_0}$ denotes the pull-back by the flow $F_{t,t_0}$ (see \cite{MilsteinRepin}, \cite{Bismut}, \cite{LaCa-Or2008}). This property does not hold for the general stochastic forced Hamiltonian system \eqref{eq: Stochastic dissipative Hamiltonian system}. However, for certain choices of the forcing terms, the flow may be \emph{conformally symplectic}, which means that for all $t \geq t_0$ there exists a constant (possibly random) $c_{t,t_0}\in \mathbb{R}$ such that

\begin{equation}
\label{eq:Conformal symplecticity of the Hamiltonian flow}
F^*_{t,t_0} \Omega_{T^*Q} = c_{t,t_0} \, \Omega_{T^*Q}.
\end{equation}

\noindent
Deterministic conformally symplectic systems are considered in \cite{McLachlanConformal}. Conformal symplecticity for the special case of \eqref{eq: Stochastic dissipative Hamiltonian system} with a separable Hamiltonian, an additive noise, and the forcing terms equal to $F(q,p) = -\nu p$ with a real parameter $\nu$, and $f_i(q,p)=0$ for $i=1, \ldots, m$, was considered in \cite{BouRabeeOwhadiBoltzmannGibbs} and \cite{Hong2017}. Below we demonstrate that the property of conformal symplecticity persists for more general cases.

\begin{thm}[{\bf Conformal symplecticity}] 
\label{thm:Conformal symplecticity}
Suppose that $H(q,p)$, $F(q,p)$, and $h_i(q,p)$, $f_i(q,p)$ for $i=1,\ldots,m$ satisfy conditions (H1)-(H3). If the forcing terms have the form

\begin{equation}
\label{eq:Linear forcing terms}
F(q,p) = -\nu_0 p, \qquad\qquad f_i(q,p) = -\nu_i p, \qquad\qquad i=1,\ldots, m,
\end{equation}

\noindent
for real parameters $\nu_i$, then the stochastic flow $F_{t,t_0}$ for \eqref{eq: Stochastic dissipative Hamiltonian system} is almost surely conformally symplectic with the parameter $c_{t,t_0}$ in \eqref{eq:Conformal symplecticity of the Hamiltonian flow} given by

\begin{equation}
\label{eq:Parameter c_t}
c_{t,t_0} = \exp\Big( {-\nu_0(t-t_0) - \sum_{i=1}^{m}\nu_i \big(W^i(t)-W^i(t_0)\big)} \Big)
\end{equation}

\noindent
for all $t\geq t_0$.

\end{thm}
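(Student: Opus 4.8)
The plan is to track the evolution of the canonical two-form $\Omega_{T^*Q}=dq\wedge dp$ along the stochastic flow and to show that it obeys a scalar linear Stratonovich equation whose solution is precisely $c_{t,t_0}$. Working in the Stratonovich formulation is essential, since Stratonovich differentials obey the ordinary Leibniz and chain rules, so that the exterior derivative $d$ in the phase-space variables $(q,p)$ commutes with the stochastic differential $d_t$. Concretely, I would first apply $d$ to the two components of \eqref{eq: Stochastic dissipative Hamiltonian system}, with the linear forcing \eqref{eq:Linear forcing terms} inserted, to obtain the variational (tangent-flow) equations for the one-forms $dq$ and $dp$,
\begin{align*}
d_t(dq) &= d\Big(\tfrac{\partial H}{\partial p}\Big)\,dt + \sum_{i=1}^m d\Big(\tfrac{\partial h_i}{\partial p}\Big)\circ dW^i, \\
d_t(dp) &= \Big[-d\Big(\tfrac{\partial H}{\partial q}\Big)-\nu_0\,dp\Big]\,dt + \sum_{i=1}^m\Big[-d\Big(\tfrac{\partial h_i}{\partial q}\Big)-\nu_i\,dp\Big]\circ dW^i,
\end{align*}
where, e.g., $d(\partial H/\partial p^k)=\sum_j(\partial^2 H/\partial q^j\partial p^k)\,dq^j+\sum_j(\partial^2 H/\partial p^j\partial p^k)\,dp^j$, and analogously for the remaining terms.

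Second, I would compute the Stratonovich differential of $\Omega_{T^*Q}$ by the Leibniz rule, $d_t(dq\wedge dp)=(d_t\,dq)\wedge dp+dq\wedge(d_t\,dp)$, and substitute the expressions above. The contributions coming from $H$ and from each $h_i$ vanish by the symmetry of the Hessian matrices together with the antisymmetry of the wedge product: the cancellation $d(\partial H/\partial p)\wedge dp-dq\wedge d(\partial H/\partial q)=0$ (and likewise for each $h_i$) is exactly the statement that the unforced Hamiltonian flow is symplectic. The only surviving terms are the linear forcing contributions $dq\wedge(-\nu_0\,dp)\,dt$ and $\sum_i dq\wedge(-\nu_i\,dp)\circ dW^i$, which yields
\begin{equation*}
d_t\big(dq\wedge dp\big) = -\nu_0\,(dq\wedge dp)\,dt - \sum_{i=1}^m \nu_i\,(dq\wedge dp)\circ dW^i.
\end{equation*}

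Third, I would read this as a scalar linear Stratonovich equation for the pulled-back form $F^*_{t,t_0}\Omega_{T^*Q}$. Because it is Stratonovich, there is no It\^o correction, and its solution is the exponential $F^*_{t,t_0}\Omega_{T^*Q}=\exp\big(-\nu_0(t-t_0)-\sum_i\nu_i(W^i(t)-W^i(t_0))\big)\,\Omega_{T^*Q}$, which is precisely \eqref{eq:Conformal symplecticity of the Hamiltonian flow} with $c_{t,t_0}$ as in \eqref{eq:Parameter c_t}. Since $c_{t,t_0}$ depends on the Wiener increments it is a genuinely random constant, consistent with the definition of conformal symplecticity.

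I expect the main obstacle to be the rigorous justification of the first step, namely that $d$ commutes with $d_t$ and that the Leibniz rule holds for the Stratonovich differential of a wedge of one-forms. This can be made precise either by invoking the mean-square differentiability of the flow with respect to initial data (stated just before Theorem~\ref{thm:S and F generate the Hamiltonian flow}) and reformulating the claim in terms of the Jacobian $DF_{t,t_0}$ via the condition $(DF_{t,t_0})^T\mathbb{J}\,DF_{t,t_0}=c_{t,t_0}\,\mathbb{J}$ for the canonical structure matrix $\mathbb{J}$, or by citing the corresponding results on stochastic flows and their derivatives in \cite{Bismut}, \cite{Kunita1997}, and \cite{LaCa-Or2008}.
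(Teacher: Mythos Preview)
Your proposal is correct and follows essentially the same route as the paper's proof: derive a scalar linear Stratonovich SDE for the pulled-back symplectic form and solve it explicitly. The only difference is packaging: the paper phrases the key step geometrically, invoking the stochastic Lie-derivative formula $d_t(F^*_{t,t_0}\Omega_{T^*Q})=F^*_{t,t_0}(\pounds_X\Omega_{T^*Q})\,dt+\sum_i F^*_{t,t_0}(\pounds_{Y_i}\Omega_{T^*Q})\circ dW^i$ and then computing $\pounds_X\Omega_{T^*Q}=-\nu_0\,\Omega_{T^*Q}$ via Cartan's magic formula, whereas you perform the same computation directly in coordinates by commuting $d$ with $d_t$ and exploiting Hessian symmetry against the wedge product.
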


\begin{proof}
For fixed $(q,p)\in T^*Q$, the stochastic process $F_{t,t_0}(q,p)$ satisfies the system \eqref{eq: Stochastic dissipative Hamiltonian system}, which can be written as

\begin{equation}
\label{eq:SDE for the flow}
d_t F_{t,t_0}(q,p) = X\big( F_{t,t_0}(q,p)\big)\,dt + \sum_{i=1}^m Y_i\big( F_{t,t_0}(q,p)\big)\circ dW^i(t),
\end{equation}

\noindent
where $X$ and $Y_i$ are vector fields on $T^*Q$, and are given by, respectively,

\begin{equation}
\label{eq:Vector fields X and Y}
X = \frac{\partial H}{\partial p} \frac{\partial}{\partial q} + \bigg[-\frac{\partial H}{\partial q} + F(q,p) \bigg] \frac{\partial}{\partial p},\qquad\quad Y_i = \frac{\partial h_i}{\partial p} \frac{\partial}{\partial q} + \bigg[-\frac{\partial h_i}{\partial q} + f_i(q,p) \bigg] \frac{\partial}{\partial p}, \qquad\quad i=1,\ldots, m.
\end{equation}

\noindent
Let us calculate the stochastic differential of $F^*_{t,t_0} \Omega_{T^*Q}$. Using the stochastic generalization of the dynamic definition of the Lie derivative (see Theorem~1.2 in \cite{HolmTyranowskiSolitons}), we can write

\begin{equation}
\label{eq:Stochastic differential of the pull-back of the symplectic form}
d_t ( F^*_{t,t_0} \Omega_{T^*Q} ) = F^*_{t,t_0} (\pounds_X \Omega_{T^*Q}) \, dt + \sum_{i=1}^m F^*_{t,t_0} (\pounds_{Y_i} \Omega_{T^*Q})\circ dW^i(t),
\end{equation}

\noindent
where $\pounds_X$ and $\pounds_{Y_i}$ denote the Lie derivatives with respect to the vector fields $X$ and $Y_i$, respectively. Using Cartan's magic formula (see, e.g., \cite{AbrahamMarsdenMTA}) we have that

\begin{equation}
\label{eq:Cartan's magic formula}
\pounds_X \Omega_{T^*Q} = d i_X \Omega_{T^*Q} + i_X d\Omega_{T^*Q} = d i_X \Omega_{T^*Q},
\end{equation}

\noindent
since $d\Omega_{T^*Q}=0$, where $i_X$ denotes the interior product with the vector field $X$. Substituting \eqref{eq:Vector fields X and Y}, \eqref{eq:Linear forcing terms}, and \eqref{eq:Symplectic form}, we obtain

\begin{equation}
\label{eq:Lie derivative with respect to X}
\pounds_X \Omega_{T^*Q} = - \nu_0 \, \Omega_{T^*Q},
\end{equation}

\noindent
since the Hamiltonian function $H$ is $C^2$. In a similar fashion we show that $\pounds_{Y_i} \Omega_{T^*Q} = - \nu_i \Omega_{T^*Q}$. Plugging this in \eqref{eq:Stochastic differential of the pull-back of the symplectic form}, we obtain a stochastic differential equation of the form

\begin{equation}
\label{eq:SDE for symplecticity}
d_t ( F^*_{t,t_0} \Omega_{T^*Q} ) = -\nu_0 (F^*_{t,t_0}\Omega_{T^*Q})\,dt - \sum_{i=1}^m \nu_i (F^*_{t,t_0}\Omega_{T^*Q})\circ dW^i(t).
\end{equation}

\noindent
It is straightforward to verify that the solution of \eqref{eq:SDE for symplecticity} that satisfies the initial condition $F^*_{t_0,t_0} \Omega_{T^*Q} = \Omega_{T^*Q}$ has the form

\begin{equation}
\label{eq:Solution of the conformal symplecticity SDE}
F^*_{t,t_0} \Omega_{T^*Q} = c_{t,t_0} \, \Omega_{T^*Q}
\end{equation}

\noindent
with $c_{t,t_0}$ given by \eqref{eq:Parameter c_t}, which proves the conformal symplecticity of the flow $F^*_{t,t_0}$. It holds almost surely, since the solution of the SDE \eqref{eq:SDE for symplecticity} is pathwise unique (see \cite{ArnoldSDE}, \cite{IkedaWatanabe1989}, \cite{KloedenPlatenSDE}, \cite{Kunita1997}).

\end{proof}

The evolution of stochastic Hamiltonian systems without forcing preserves volumes in phase space, that is, for the standard volume form on $T^*Q$ defined as

\begin{equation}
\label{eq:Volume form}
\mu = dq^1 \wedge \ldots \wedge dq^N \wedge dp^1 \wedge \ldots dp^N
\end{equation}

\noindent
we have that $F^*_{t,t_0} \mu = \mu$. This is a direct consequence of the symplecticity of the flow. Phase space volume preservation does not hold for the general forced system \eqref{eq: Stochastic dissipative Hamiltonian system}, although for certain choices of the forcing terms the flow $F^*_{t,t_0}$ may possess a property similar to \eqref{eq:Conformal symplecticity of the Hamiltonian flow}. Such a property was proved for the special case of \eqref{eq: Stochastic dissipative Hamiltonian system} with a separable Hamiltonian, an additive noise, and the forcing terms equal to $F(q,p) = -\Gamma p$ with a constant $N\times N$ matrix $\Gamma$, and $f_i(q,p)=0$ for $i=1, \ldots, m$ (see \cite{Bismut}, \cite{Hong2017}, \cite{MilsteinErgodic}, \cite{MilsteinMeanSquarePreprint}, \cite{MilsteinRepin2001}, \cite{MilsteinRepin}, \cite{MilsteinQuasiSymplectic}). Below we demonstrate that this property holds also for more general cases.

\begin{thm}[{\bf Phase space volume evolution}] 
\label{thm:Phase space volume evolution}
Suppose that $H(q,p)$, $F(q,p)$, and $h_i(q,p)$, $f_i(q,p)$ for $i=1,\ldots,m$ satisfy conditions (H1)-(H3). If the forcing terms have the form

\begin{equation}
\label{eq:Linear forcing terms for volume form evolution}
F(q,p) = -\Gamma_0 p, \qquad\qquad f_i(q,p) = -\Gamma_i p, \qquad\qquad i=1,\ldots, m,
\end{equation}

\noindent
for constant $N \times N$ matrices $\Gamma_i$, then the phase space volume form $\mu$ for $t\geq t_0$ almost surely evolves according to the formula

\begin{equation}
\label{eq:Volume form evolution}
F^*_{t,t_0} \mu = b_{t,t_0} \, \mu,
\end{equation}

\noindent
where

\begin{equation}
\label{eq:Parameter b_t}
b_{t,t_0} = \exp\Big( {-\tr \Gamma_0 \cdot (t-t_0) - \sum_{i=1}^{m}\tr \Gamma_i \cdot \big(W^i(t)-W^i(t_0)\big)} \Big),
\end{equation}

\noindent
and $F_{t,t_0}$ is the stochastic flow for \eqref{eq: Stochastic dissipative Hamiltonian system}.
\end{thm}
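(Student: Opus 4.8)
The plan is to follow the proof of Theorem~\ref{thm:Conformal symplecticity} almost verbatim, replacing the symplectic two-form $\Omega_{T^*Q}$ by the volume form $\mu$. First I would write the system \eqref{eq: Stochastic dissipative Hamiltonian system} in the vector-field form \eqref{eq:SDE for the flow} with $X$ and $Y_i$ given by \eqref{eq:Vector fields X and Y}, and then apply the stochastic dynamic definition of the Lie derivative (Theorem~1.2 in \cite{HolmTyranowskiSolitons}) to obtain
\begin{equation*}
d_t ( F^*_{t,t_0} \mu ) = F^*_{t,t_0} (\pounds_X \mu)\, dt + \sum_{i=1}^m F^*_{t,t_0} (\pounds_{Y_i} \mu)\circ dW^i(t).
\end{equation*}

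The only substantive step is the evaluation of the Lie derivatives $\pounds_X \mu$ and $\pounds_{Y_i}\mu$. Since $\mu$ is a top-degree form on the $2N$-dimensional manifold $T^*Q$, we have $d\mu = 0$ automatically, so Cartan's magic formula reduces $\pounds_V \mu$ to $d\,i_V \mu$, which for any vector field $V$ equals $(\divergence V)\,\mu$. I would therefore compute the divergences of $X$ and $Y_i$ in the canonical coordinates $(q^1,\ldots,q^N,p^1,\ldots,p^N)$. For $X$ the Hamiltonian contribution $\sum_j \partial^2 H/\partial q^j\partial p^j - \sum_j \partial^2 H/\partial p^j\partial q^j$ cancels, because $H$ is $C^2$ and the mixed second partials coincide, leaving only the forcing part $\sum_j \partial F^j/\partial p^j$. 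Substituting $F(q,p) = -\Gamma_0 p$ gives $\divergence X = -\tr\Gamma_0$, and the identical computation yields $\divergence Y_i = -\tr\Gamma_i$, so that $\pounds_X \mu = -\tr\Gamma_0\,\mu$ and $\pounds_{Y_i}\mu = -\tr\Gamma_i\,\mu$.

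Plugging these into the displayed stochastic differential produces the scalar linear Stratonovich equation
\begin{equation*}
d_t ( F^*_{t,t_0} \mu ) = -\tr\Gamma_0\,(F^*_{t,t_0}\mu)\,dt - \sum_{i=1}^m \tr\Gamma_i\,(F^*_{t,t_0}\mu)\circ dW^i(t),
\end{equation*}
which has exactly the same structure as \eqref{eq:SDE for symplecticity} with the scalars $\nu_0,\nu_i$ replaced by $\tr\Gamma_0,\tr\Gamma_i$. Solving it with the initial condition $F^*_{t_0,t_0}\mu = \mu$ gives $F^*_{t,t_0}\mu = b_{t,t_0}\,\mu$ with $b_{t,t_0}$ as in \eqref{eq:Parameter b_t}, and the almost-sure statement follows from pathwise uniqueness of solutions to this SDE.

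I do not expect a genuine obstacle: the argument is essentially a transcription of the conformal symplecticity proof. The one point worth emphasizing is that $\pounds_V \mu = (\divergence V)\mu$ together with the vanishing of the Hamiltonian part of each divergence (by equality of mixed partials) is precisely the mechanism that isolates $\tr\Gamma_0$ and $\tr\Gamma_i$. As a consistency check I would note that for $N=1$ the volume form $\mu$ coincides with the symplectic form $\Omega_{T^*Q}$ and $\tr\Gamma_i = \nu_i$, so that $b_{t,t_0}$ recovers the factor $c_{t,t_0}$ of Theorem~\ref{thm:Conformal symplecticity}.
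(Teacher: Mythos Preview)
Your proposal is correct and follows essentially the same approach as the paper's own proof: both pull back the volume form along the stochastic flow via the stochastic Lie derivative formula, reduce $\pounds_V\mu$ to $(\divergence V)\,\mu$, observe that the Hamiltonian contributions to the divergence cancel by equality of mixed partials leaving only $-\tr\Gamma_0$ and $-\tr\Gamma_i$, and then solve the resulting scalar linear Stratonovich SDE. The paper additionally remarks that the result is a special case of Lemma~4.3.1 in \cite{Kunita1997}, but otherwise your argument matches it step for step.
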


\begin{proof}
This theorem is a special case of, e.g., Lemma~4.3.1 in \cite{Kunita1997}. We briefly outline an alternative geometric proof, analogous to the proof of Theorem~\ref{thm:Conformal symplecticity}. Similar to \eqref{eq:Stochastic differential of the pull-back of the symplectic form}, we can write

\begin{equation}
\label{eq:Stochastic differential of the pull-back of the volume form}
d_t ( F^*_{t,t_0} \mu ) = F^*_{t,t_0} (\pounds_X \mu) \, dt + \sum_{i=1}^m F^*_{t,t_0} (\pounds_{Y_i} \mu)\circ dW^i(t).
\end{equation}

\noindent
Using the property of the divergence operator (see, e.g., \cite{AbrahamMarsdenMTA}), we calculate

\begin{equation}
\label{eq:Lie derivative of the volume form with respect to X}
\pounds_X \mu = (\divergence X)\cdot \, \mu = -(\tr \Gamma_0)\cdot \mu,
\end{equation}

\noindent
where we have used \eqref{eq:Vector fields X and Y} and \eqref{eq:Linear forcing terms for volume form evolution}, and the fact that the Hamiltonian function $H$ is~$C^2$. In a similar way we show that $\pounds_{Y_i} \mu = -(\tr \Gamma_i)\cdot \mu$. Therefore, we obtain the SDE of the form

\begin{equation}
\label{eq:SDE for the evolution of the volume form}
d_t ( F^*_{t,t_0} \mu ) =  -(\tr \Gamma_0)\cdot (F^*_{t,t_0} \mu) \, dt - \sum_{i=1}^m (\tr \Gamma_i)\cdot (F^*_{t,t_0} \mu)\circ dW^i(t).
\end{equation}

\noindent
It is straightforward to verify that the solution that satisfies the initial condition $F^*_{t_0,t_0} \mu = \mu$ is given by \eqref{eq:Volume form evolution} with $b_{t,t_0}$ as in \eqref{eq:Parameter b_t}. The formula \eqref{eq:Volume form evolution} holds almost surely, because the solution of the SDE is pathwise unique (see \cite{ArnoldSDE}, \cite{IkedaWatanabe1989}, \cite{KloedenPlatenSDE}, \cite{Kunita1997}).

\end{proof}

\section{Stochastic Lagrange-d'Alembert variational integrators}
\label{sec:Stochastic Lagrange-d'Alembert variational integrators}

Suppose we would like to solve \eqref{eq: Stochastic dissipative Hamiltonian system} on the interval $[0,T]$ with the initial conditions $(q_0,p_0)\in T^*Q$. Consider the discrete set of times $t_k = k\cdot\Delta t$ for $k=0,1,\ldots,K$, where $\Delta t = T/K$ is the time step. In order to determine the discrete curve $\{(q_k,p_k)\}_{k=0,\ldots,K}$ that approximates the exact solution of \eqref{eq: Stochastic dissipative Hamiltonian system} at times $t_k$ we need to construct an approximation of the exact stochastic flow $F_{t_{k+1},t_k}$ on each interval $[t_k,t_{k+1}]$, so that $(q_{k+1},p_{k+1}) \approx F_{t_{k+1},t_k}(q_k,p_k)$. A numerical method respecting the underlying Lagrange-d'Alembert principle \eqref{eq:Stochastic Lagrange-d'Alembert Principle} can be constructed by approximating the generating function and forcing terms in \eqref{eq:Equations generating the flow of the Hamiltonian system}. Let the discrete Hamiltonian function $H^+_d(q_a,p_b; t_a, t_b)$ be an approximation of the generating function \eqref{eq:Definition of the generating function}, and let the discrete forces $F^\pm_d(q_a,p_b; t_a, t_b)$ be approximations of the forcing terms \eqref{eq:Definition of the exact forcing}. The approximate numerical flow $F^+_{t_{k+1},t_k}:(q_k,p_k)\longrightarrow (q_{k+1},p_{k+1})$ is now generated as in \eqref{eq:Equations generating the flow of the Hamiltonian system in the proof}:

\begin{align}
\label{eq:Equations generating the numerical flow of the Hamiltonian system}
q_{k+1} &= D_2 H^+_d(q_k,p_{k+1}; t_k, t_{k+1})-F^+_d(q_k,p_{k+1}; t_k, t_{k+1}), \nonumber \\
p_k &= D_1 H^+_d(q_k,p_{k+1}; t_k, t_{k+1})-F^-_d(q_k,p_{k+1}; t_k, t_{k+1}).
\end{align}

\noindent
If there is no risk of confusion, we will omit writing the time arguments of $H^+_d$ and $F^\pm_d$. We will refer to the scheme \eqref{eq:Equations generating the numerical flow of the Hamiltonian system} as a stochastic Lagrange-d'Alembert variational integrator.

\subsection{Discrete stochastic Lagrange-d'Alembert principle}
\label{sec:Discrete stochastic Lagrange-d'Alembert principle}

The advantage of the integrator \eqref{eq:Equations generating the numerical flow of the Hamiltonian system} is that it follows from a discrete version of the stochastic Lagrange-d'Alembert principle \eqref{eq:Stochastic Lagrange-d'Alembert Principle}. The discrete Lagrange-d'Alembert principle for deterministic Lagrangian systems was proposed in \cite{KaneMarsden2000}; see also \cite{MarsdenWestVarInt}. Below we generalize it to the stochastic case in the setting of Hamiltonian systems defined on the phase space $T^*Q$. Define the discrete random curve space $C_d$ as

\begin{equation}
\label{eq:Discrete random curve space}
C_d = \Big\{ \big\{ (q_k,p_k) \big\}_{k=0,\ldots,K} \,\big |\, (q_k,p_k):\Omega \longrightarrow T^*Q \text{ are random variables for each $k=0,\ldots,K$} \Big\}.
\end{equation}

\noindent
On that space define the discrete action functional, $\mathcal{B}_d:\Omega \times C_d \longrightarrow \mathbb{R}$,

\begin{equation}
\label{eq:Discrete action functional}
\mathcal{B}_d \big[ \{ (q_k,p_k) \}_{k=0,\ldots,K} \big] = p_K q_K - \sum_{k=0}^{K-1} \big(p_{k+1} q_{k+1} - H^+_d(q_k,p_{k+1}; t_k, t_{k+1}) \big).
\end{equation}

\noindent
Note that $\mathcal{B}_d$ is an approximation of the stochastic action functional \eqref{eq:Stochastic action functional} on the interval $[0,T]$.

\begin{thm}[{\bf Discrete stochastic Lagrange-d'Alembert Principle in Phase Space}]
\label{thm:Discrete stochastic Lagrange-d'Alembert Principle in Phase Space}
Suppose the discrete Hamiltonian $H^+_d$ is almost surely continuously differentiable, and the discrete forces $F^\pm_d$ are almost surely continuous with respect to their arguments. The discrete random curve $\{ (q_k,p_k) \}_{k=0,\ldots,K}$ satisfies the set of equations

\begin{align}
\label{eq:Equations equivalent to the discrete Lagrange-d'Alembert principle}
q_{k} &= D_2 H^+_d(q_{k-1},p_{k}; t_{k-1}, t_{k})-F^+_d(q_{k-1},p_{k}; t_{k-1}, t_{k}), \nonumber \\
p_k &= D_1 H^+_d(q_k,p_{k+1}; t_k, t_{k+1})-F^-_d(q_k,p_{k+1}; t_k, t_{k+1}),
\end{align}

\noindent
almost surely for $k=1,\ldots,K-1$, if and only if it almost surely satisfies the variational equation

\begin{equation}
\label{eq:Discrete stochastic Lagrange-d'Alembert principle}
\delta \mathcal{B}_d - \sum_{k=0}^{K-1} \big( F^-_d(q_k,p_{k+1}; t_k, t_{k+1}) \delta q_k + F^+_d(q_k,p_{k+1}; t_k, t_{k+1}) \delta p_{k+1} \big) = 0
\end{equation}

\noindent
for all variations $\{ (\delta q_k, \delta p_k) \}_{k=0,\ldots,K}$ such that $\delta q_0 =0$ and $\delta p_K =0$ almost surely.
\end{thm}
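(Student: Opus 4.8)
The plan is to compute the variation $\delta \mathcal{B}_d$ directly from \eqref{eq:Discrete action functional} by differentiating with respect to $\epsilon$ along the perturbed discrete curve $\{(q_k+\epsilon\,\delta q_k,\,p_k+\epsilon\,\delta p_k)\}$, in the sense of \eqref{eq:Definition of the variation of the action functional}. Writing $D_1 H^+_d$ and $D_2 H^+_d$ for the partial derivatives of $H^+_d(q_k,p_{k+1})$ with respect to its first and second slots, and using the assumed almost sure differentiability of $H^+_d$ to interchange $d/d\epsilon$ with the finite sum, I would obtain
\[
\delta \mathcal{B}_d = p_K\,\delta q_K + q_K\,\delta p_K - \sum_{k=0}^{K-1}\big( p_{k+1}\,\delta q_{k+1} + q_{k+1}\,\delta p_{k+1} - D_1 H^+_d\,\delta q_k - D_2 H^+_d\,\delta p_{k+1}\big).
\]

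The key manipulation is a discrete summation by parts: reindex $\sum_{k=0}^{K-1} p_{k+1}\,\delta q_{k+1} = \sum_{k=1}^{K} p_k\,\delta q_k$, and likewise for the $q_{k+1}\,\delta p_{k+1}$ and the $D_2 H^+_d\,\delta p_{k+1}$ sums (the latter giving $\sum_{k=1}^{K} D_2 H^+_d(q_{k-1},p_k)\,\delta p_k$), so that every term is labeled by the index of the variation it multiplies. Collecting coefficients, the boundary contributions at $k=K$ exactly cancel the variation $p_K\,\delta q_K + q_K\,\delta p_K$ of the endpoint term $p_K q_K$, and the only remaining endpoint contributions multiply $\delta q_0$ and $\delta p_K$. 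Imposing $\delta q_0=0$ and $\delta p_K=0$, and subtracting the discrete forcing terms (whose $\delta q_0$ and $\delta p_K$ pieces likewise drop out after the reindexing $\sum_k F^+_d\,\delta p_{k+1}=\sum_k F^+_d(q_{k-1},p_k)\,\delta p_k$), the entire left-hand side of \eqref{eq:Discrete stochastic Lagrange-d'Alembert principle} collapses to
\[
\sum_{k=1}^{K-1}\!\Big( D_1 H^+_d(q_k,p_{k+1}) - F^-_d(q_k,p_{k+1}) - p_k\Big)\delta q_k + \sum_{k=1}^{K-1}\!\Big( D_2 H^+_d(q_{k-1},p_k) - F^+_d(q_{k-1},p_k) - q_k\Big)\delta p_k.
\]

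Both implications then read off from this identity. For the forward direction, if the curve satisfies \eqref{eq:Equations equivalent to the discrete Lagrange-d'Alembert principle}, every bracketed coefficient vanishes almost surely, so the expression is almost surely zero for arbitrary variations. For the converse I would invoke a discrete stochastic fundamental lemma of the calculus of variations: it suffices to test the identity against the finite family of \emph{deterministic} coordinate variations $\delta q_j = e_i\,\delta_{jk}$ (and analogously for $\delta p$), each of which isolates a single scalar coefficient. The step requiring genuine care is precisely this converse — since the principle is assumed to hold almost surely \emph{for each} variation, the exceptional null set could a priori depend on the variation chosen. The remedy, which I expect to be the main point, is to restrict attention to the finite set of constant coordinate-vector variations; their associated null sets form a finite union, so outside a single null set all coefficients vanish simultaneously, yielding \eqref{eq:Equations equivalent to the discrete Lagrange-d'Alembert principle} almost surely for $k=1,\ldots,K-1$.
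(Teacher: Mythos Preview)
Your proof is correct and follows essentially the same approach as the paper: direct computation of $\delta\mathcal{B}_d$, a discrete summation-by-parts to collect coefficients of the interior variations, and then reading off the equivalence. The paper's version is slightly more economical---it reindexes only the $p_{k+1}\,\delta q_{k+1}$ sum and leaves the $\delta p_{k+1}$ terms as they are, arriving at $-\sum_{k=0}^{K-1}(q_{k+1}-D_2H^+_d)\,\delta p_{k+1}-\sum_{k=0}^{K-1}(p_k-D_1H^+_d)\,\delta q_k$ rather than shifting everything to a common index---but this is purely cosmetic. Your treatment of the converse (restricting to finitely many deterministic coordinate variations so that the union of exceptional null sets is still null) is more careful than the paper, which simply asserts the implication without comment; your remark is a genuine, if minor, improvement in rigor.
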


\begin{proof}
Consider an arbitrary random curve $\{ (q_k,p_k) \}_{k=0,\ldots,K}$. Let us calculate the variation $\delta \mathcal{B}_d$ corresponding to the arbitrary variation $\{ (\delta q_k, \delta p_k) \}_{k=0,\ldots,K}$ with $\delta q_0 =0$ and $\delta p_K =0$ (almost surely). We have

\begin{align}
\label{eq:Calculating delta Bd}
\delta \mathcal{B}_d &= p_K \delta q_K - \sum_{k=0}^{K-1} \big( \delta p_{k+1} q_{k+1} + p_{k+1} \delta q_{k+1} - D_1 H^+_d(q_k,p_{k+1}; t_k, t_{k+1}) \delta q_k - D_2 H^+_d(q_k,p_{k+1}; t_k, t_{k+1}) \delta p_{k+1} \big) \nonumber \\
&= - \sum_{k=0}^{K-1} \big( q_{k+1} - D_2 H^+_d(q_k,p_{k+1}; t_k, t_{k+1}) \big) \delta p_{k+1} - \sum_{k=0}^{K-1} \big( p_k - D_1 H^+_d(q_k,p_{k+1}; t_k, t_{k+1}) \big) \delta q_k,
\end{align}

\noindent
where in the second equality we have shifted the summation index in the $\delta q_{k+1}$ term and used the fact that $\delta q_0=0$. It is now straightforward to see that if the set of equations \eqref{eq:Equations equivalent to the discrete Lagrange-d'Alembert principle} is satisfied, then the variational equation \eqref{eq:Discrete stochastic Lagrange-d'Alembert principle} holds almost surely. Conversely, if the variational equation \eqref{eq:Discrete stochastic Lagrange-d'Alembert principle} holds for all variations $\{ (\delta q_k, \delta p_k) \}_{k=0,\ldots,K}$ with $\delta q_0 =0$ and $\delta p_K =0$, then the set of equations \eqref{eq:Equations equivalent to the discrete Lagrange-d'Alembert principle} has to be satisfied almost surely.

\end{proof}

\subsection{Discrete Noether's theorem for stochastic systems with forcing}
\label{sec:Discrete stochastic forced Noether's theorem}

Another advantage of the integrator \eqref{eq:Equations generating the numerical flow of the Hamiltonian system} is that one can prove a discrete counterpart of Theorem~\ref{thm:Stochastic forced Noether's theorem}. If the discrete system inherits the symmetries of the continuous problem, then the evolution of the momentum maps will be accurately captured by the numerical solution. Discrete Noether's theorem for systems described by a type-II generating function was first proved for deterministic systems in \cite{LeokZhang}, and later generalized to the stochastic case in \cite{HolmTyranowskiGalerkin}. Discrete Noether's theorem for deterministic Lagrangian systems with forcing was first proposed in \cite{MarsdenWestVarInt}. Below we combine these ideas and formulate a version of discrete Noether's theorem applicable to discrete systems described by \eqref{eq:Equations generating the numerical flow of the Hamiltonian system}. Let $R_d: \Omega \times Q \times T^*Q \longrightarrow \mathbb{R}$ be the generalized discrete stochastic Lagrangian defined as

\begin{equation}
\label{eq:Generalized discrete Lagrangian}
R_d(q_k, q_{k+1}, p_{k+1}) = p_{k+1} q_{k+1} - H^+_d(q_k, p_{k+1}).
\end{equation} 

\noindent
Consider the action of the Lie group $G$ on $Q \times T^*Q$ given by

\begin{equation}
\label{eq:Generalized action of the Lie group}
\Phi^{Q \times T^*Q}_g(q_k, q_{k+1}, p_{k+1}) = \big( \Phi_g(q_k), \Phi^{T^*Q}_g(q_{k+1}, p_{k+1})\big).
\end{equation}

\noindent
For any $\xi \in \mathfrak{g}$ the corresponding infinitesimal generator on $Q \times T^*Q$ is then given by

\begin{equation}
\xi_{Q \times T^*Q}(q_k, q_{k+1}, p_{k+1}) = \big(\xi_Q(q_k), \xi_{T^*Q}(q_{k+1},p_{k+1})\big) = \big(\xi_Q(q_k), \xi^q_{T^*Q}(q_{k+1},p_{k+1}), \xi^p_{T^*Q}(q_{k+1},p_{k+1})\big).
\end{equation}

\begin{thm}[{\bf Discrete Noether's theorem for stochastic systems with forcing}]
\label{thm:Discrete stochastic forced Noether's theorem}
Suppose the generalized discrete stochastic Lagrangian $R_d: \Omega \times Q \times T^*Q \longrightarrow \mathbb{R}$ is invariant under the action of the Lie group $G$, that is,

\begin{equation}
\label{eq:Invariance of the generalized Lagrangian}
R_d \Big( \Phi^{Q \times T^*Q}_g(q_k, q_{k+1},p_{k+1}) \Big) = R_d(q_k, q_{k+1}, p_{k+1}), \quad \qquad \text{for all $g\in G$}.
\end{equation}

\noindent
If the discrete forces $F^\pm_d$ satisfy the condition

\begin{equation}
\label{eq:Condition on the discrete forces}
F^-_d(q_k, p_{k+1})\cdot \xi_Q(q_k) + F^+_d(q_k, p_{k+1})\cdot \xi^p_{T^*Q}(q_{k+1},p_{k+1}) = 0
\end{equation}

\noindent
for all $(q_k, q_{k+1},p_{k+1}) \in Q \times T^*Q$, then the cotangent lift momentum map $J$ associated with $\Phi^{T^*Q}$ is almost surely preserved along the solutions of the discrete equations \eqref{eq:Equations generating the numerical flow of the Hamiltonian system}, i.e., a.s. $J(q_{k+1},p_{k+1}) = J(q_k,p_k)$.
\end{thm}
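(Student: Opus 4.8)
The plan is to mimic the continuous proof of Theorem~\ref{thm:Stochastic forced Noether's theorem}, but at the discrete level, exploiting the invariance of the generalized discrete Lagrangian $R_d$ together with the discrete equations of motion \eqref{eq:Equations generating the numerical flow of the Hamiltonian system}. First I would differentiate the invariance condition \eqref{eq:Invariance of the generalized Lagrangian} with respect to the group parameter. Writing $g = \exp(\lambda \xi)$ and applying $\frac{d}{d\lambda}\big|_{\lambda=0}$ to both sides, the right-hand side is constant in $\lambda$, so we obtain an infinitesimal invariance identity of the form
\begin{equation}
\label{eq:Infinitesimal invariance of Rd}
\frac{\partial R_d}{\partial q_k}\cdot \xi_Q(q_k) + \frac{\partial R_d}{\partial q_{k+1}}\cdot \xi^q_{T^*Q}(q_{k+1},p_{k+1}) + \frac{\partial R_d}{\partial p_{k+1}}\cdot \xi^p_{T^*Q}(q_{k+1},p_{k+1}) = 0.
\end{equation}

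Next I would compute the three partial derivatives of $R_d$ from its definition \eqref{eq:Generalized discrete Lagrangian}, namely $\partial R_d/\partial q_k = -D_1 H^+_d(q_k,p_{k+1})$, $\partial R_d/\partial q_{k+1} = p_{k+1}$, and $\partial R_d/\partial p_{k+1} = q_{k+1} - D_2 H^+_d(q_k,p_{k+1})$. Now I substitute the discrete equations of motion: from the first line of \eqref{eq:Equations generating the numerical flow of the Hamiltonian system} (shifted one index, as in the first line of \eqref{eq:Equations equivalent to the discrete Lagrange-d'Alembert principle}) I have $D_2 H^+_d(q_{k-1},p_k) = q_k + F^+_d(q_{k-1},p_k)$, so that $q_{k+1} - D_2 H^+_d(q_k,p_{k+1}) = F^+_d(q_k,p_{k+1})$; and from the second line $D_1 H^+_d(q_k,p_{k+1}) = p_k + F^-_d(q_k,p_{k+1})$, so that $-D_1 H^+_d(q_k,p_{k+1}) = -p_k - F^-_d(q_k,p_{k+1})$. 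Inserting these into \eqref{eq:Infinitesimal invariance of Rd} gives
\begin{equation}
\label{eq:After substitution}
-\big(p_k + F^-_d\big)\cdot \xi_Q(q_k) + p_{k+1}\cdot \xi^q_{T^*Q}(q_{k+1},p_{k+1}) + F^+_d\cdot \xi^p_{T^*Q}(q_{k+1},p_{k+1}) = 0,
\end{equation}
where the arguments of $F^\pm_d$ are $(q_k,p_{k+1})$.

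The idea is then to recognize the momentum map via the explicit formula \eqref{eq:Momentum map formula}, $J_\xi(q,p) = p\cdot \xi_Q(q)$. I would observe that $p_{k+1}\cdot \xi^q_{T^*Q}(q_{k+1},p_{k+1}) = J_\xi(q_{k+1},p_{k+1})$; this requires checking the identity $p\cdot \xi^q_{T^*Q}(q,p) = p\cdot\xi_Q(q)$ for the cotangent-lift infinitesimal generator, which follows by differentiating the explicit cotangent-lift formula in \eqref{eq:Tangent and cotangent lift actions} and using \eqref{eq:Momentum map definition}, or simply by noting $J_\xi$ is the Hamiltonian generating $\xi_{T^*Q}$ so that $\xi^q_{T^*Q} = \partial J_\xi/\partial p = \xi_Q(q)$. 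Likewise $p_k\cdot\xi_Q(q_k) = J_\xi(q_k,p_k)$ by direct application of \eqref{eq:Momentum map formula}. Equation \eqref{eq:After substitution} thus collapses to
\begin{equation}
\label{eq:Almost done}
J_\xi(q_{k+1},p_{k+1}) - J_\xi(q_k,p_k) = F^-_d(q_k,p_{k+1})\cdot \xi_Q(q_k) + F^+_d(q_k,p_{k+1})\cdot \xi^p_{T^*Q}(q_{k+1},p_{k+1}).
\end{equation}
Finally, the orthogonality hypothesis \eqref{eq:Condition on the discrete forces} states precisely that the right-hand side vanishes, giving $J_\xi(q_{k+1},p_{k+1}) = J_\xi(q_k,p_k)$ for every $\xi\in\mathfrak{g}$, and hence $J(q_{k+1},p_{k+1}) = J(q_k,p_k)$. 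Since all the relations hold almost surely, conservation is almost sure.

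I expect the main obstacle to be purely bookkeeping rather than conceptual: keeping the index shift straight when substituting the two discrete equations (the $q$-equation must be read at step $k$ using the previous-interval update, while the $p$-equation is used at the current interval), and verifying the generator identity $p\cdot\xi^q_{T^*Q}(q,p) = J_\xi(q,p)$ so that the free $p_{k+1}$ term assembles into a momentum map at the advanced time. The stochastic aspect adds no real difficulty here, since $R_d$, $H^+_d$, $F^\pm_d$ and the discrete solution are all defined pathwise (per elementary event $\omega$), so the entire deterministic computation goes through almost surely without any stochastic calculus.
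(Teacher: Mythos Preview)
Your approach is essentially identical to the paper's: differentiate the invariance of $R_d$ at the identity, compute the three partials of $R_d$, substitute the discrete equations \eqref{eq:Equations generating the numerical flow of the Hamiltonian system}, recognize $p\cdot\xi_Q(q)=J_\xi(q,p)$ via \eqref{eq:Momentum map formula} (using $\xi^q_{T^*Q}=\xi_Q$), and invoke \eqref{eq:Condition on the discrete forces}. One bookkeeping slip to fix: from $q_{k+1}=D_2H^+_d(q_k,p_{k+1})-F^+_d(q_k,p_{k+1})$ you get $q_{k+1}-D_2H^+_d=-F^+_d$, not $+F^+_d$, so your intermediate identity should read $-(p_k+F^-_d)\cdot\xi_Q(q_k)+p_{k+1}\cdot\xi^q_{T^*Q}-F^+_d\cdot\xi^p_{T^*Q}=0$; with that correction the rearrangement to your final displayed equation is consistent (as written you have two sign errors that cancel), and note that no index shift is actually needed here since the first line of \eqref{eq:Equations generating the numerical flow of the Hamiltonian system} is already stated at step $k\to k+1$.
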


\begin{proof}
Since the generalized discrete Lagrangian $R_d$ is invariant with respect to the actions of $G$, for an arbitrary $\xi \in \mathfrak{g}$ we have

\begin{align}
\label{eq:Derivative of the generalized Lagrangian along the infinitesimal generator 1}
0 &= \frac{d}{d\lambda}\bigg|_{\lambda=0} R_d\Big( \Phi^{Q \times T^*Q}_{\exp \lambda \xi}(q_k, q_{k+1},p_{k+1}) \Big) = dR_d\cdot \xi_{Q \times T^*Q}(q_k, q_{k+1}, p_{k+1}) \nonumber \\
&= -D_1 H^+_d(q_k,p_{k+1})\cdot \xi_Q(q_k) + p_{k+1}\cdot \xi_Q(q_{k+1})  +\big(q_{k+1}- D_1 H^+_d(q_k,p_{k+1})\big)\cdot \xi^p_{T^*Q}(q_{k+1},p_{k+1}),
\end{align}

\noindent
where we have used the fact that $\xi^q_{T^*Q}(q_{k+1},p_{k+1}) = \xi_Q(q_{k+1})$. Assume that $q_k$, $q_{k+1}$, and $p_{k+1}$ satisfy the discrete evolution equation \eqref{eq:Equations generating the numerical flow of the Hamiltonian system}. By substituting \eqref{eq:Equations generating the numerical flow of the Hamiltonian system} in \eqref{eq:Derivative of the generalized Lagrangian along the infinitesimal generator 1}, we obtain

\begin{align}
\label{eq:Derivative of the generalized Lagrangian along the infinitesimal generator 2}
0 &= (-p_k-F^-_d(q_k,p_{k+1}))\cdot \xi_Q(q_k) + p_{k+1}\cdot \xi_Q(q_{k+1})  -F^+_d(q_k,p_{k+1}) \cdot \xi^p_{T^*Q}(q_{k+1},p_{k+1}).
\end{align}

\noindent
This can be rewritten as

\begin{align}
\label{eq:Discrete evolution of the momentum map}
J_\xi(q_{k+1},p_{k+1}) - J_\xi(q_k,p_k) = F^-_d(q_k,p_{k+1})\cdot \xi_Q(q_k) + F^+_d(q_k,p_{k+1}) \cdot \xi^p_{T^*Q}(q_{k+1},p_{k+1}),
\end{align}

\noindent
where we have used the definition of the cotangent lift momentum map \eqref{eq:Momentum map formula}. If the condition \eqref{eq:Condition on the discrete forces} holds, then we have $J_\xi(q_{k+1},p_{k+1}) = J_\xi(q_k,p_k)$. The result holds almost surely, because equation \eqref{eq:Equations generating the numerical flow of the Hamiltonian system} is satisfied almost surely.
\end{proof}

\paragraph{Remark. }When the discrete forces do not satisfy the condition \eqref{eq:Condition on the discrete forces}, equation \eqref{eq:Discrete evolution of the momentum map} provides the rate of change of the momentum map, which mimicks formula \eqref{eq:Stochastic differential of J} in the continuous case.

\subsection{Mean-square Lagrange-d'Alembert partitioned Runge-Kutta methods}
\label{sec:Mean-square Lagrange-d'Alembert partitioned Runge-Kutta methods}

\subsubsection{Construction}
\label{sec:Construction}

Partitioned Runge-Kutta methods for deterministic forced Hamiltonian systems have been proposed in \cite{JayNonholonomic} and \cite{MarsdenWestVarInt}. A general class of stochastic mean-square Runge-Kutta methods for Stratonovich ordinary differential equations was introduced and analyzed in \cite{Burrage1996}, \cite{Burrage1998}, and \cite{Burrage2000}. These ideas were later used by Ma \& Ding \& Ding \cite{MaDing2012} and Ma \& Ding \cite{MaDing2015} to construct symplectic Runge-Kutta methods for stochastic Hamiltonian systems without forcing; see also \cite{HolmTyranowskiGalerkin}. Below we combine these ideas and introduce mean-square Lagrange-d'Alembert partitioned Runge-Kutta methods for stochastic forced Hamiltonian systems of the form \eqref{eq: Stochastic dissipative Hamiltonian system}.

\begin{define} An $s$-stage mean-square Lagrange-d'Alembert partitioned Runge-Kutta method for the system \eqref{eq: Stochastic dissipative Hamiltonian system} is given by

\begin{subequations}
\label{eq:SPRK for stochastic forced Hamiltonian systems}
\begin{align}
\label{eq:SPRK for stochastic forced Hamiltonian systems 1}
Q_i &= q_k + \Delta t \sum_{j=1}^s a_{ij} \frac{\partial H}{\partial p}(Q_j,P_j) + \sum_{r=1}^m \Delta W^r \sum_{j=1}^s b_{ij} \frac{\partial h_r}{\partial p}(Q_j,P_j), \quad \qquad i=1,\ldots,s,  \\
\label{eq:SPRK for stochastic forced Hamiltonian systems 2}
P_i &= p_k - \Delta t \sum_{j=1}^s \bar a_{ij} \frac{\partial H}{\partial q}(Q_j,P_j) - \sum_{r=1}^m \Delta W^r \sum_{j=1}^s  \bar b_{ij} \frac{\partial h_r}{\partial q}(Q_j,P_j) \nonumber \\
&\phantom{= p_k}+ \Delta t \sum_{j=1}^s \hat a_{ij} F(Q_j,P_j) + \sum_{r=1}^m \Delta W^r \sum_{j=1}^s \hat b_{ij} f_r(Q_j,P_j), \quad \qquad \qquad i=1,\ldots,s, \\
\label{eq:SPRK for stochastic forced Hamiltonian systems 3}
q_{k+1} &= q_k + \Delta t \sum_{i=1}^s \alpha_i \frac{\partial H}{\partial p}(Q_i,P_i) + \sum_{r=1}^m \Delta W^r \sum_{i=1}^s \beta_i \frac{\partial h_r}{\partial p}(Q_i,P_i),\\
\label{eq:SPRK for stochastic forced Hamiltonian systems 4}
p_{k+1} &= p_k - \Delta t \sum_{i=1}^s \alpha_i \frac{\partial H}{\partial q}(Q_i,P_i) - \sum_{r=1}^m \Delta W^r \sum_{i=1}^s \beta_i \frac{\partial h_r}{\partial q}(Q_i,P_i) \nonumber \\
&\phantom{= p_k}+\Delta t \sum_{i=1}^s \hat \alpha_i F(Q_i,P_i) + \sum_{r=1}^m \Delta W^r \sum_{i=1}^s \hat \beta_i f_r(Q_i,P_i),
\end{align}
\end{subequations}

\noindent
where $\Delta t$ is the time step, $\Delta W = (\Delta W^1, \ldots, \Delta W^m)$ are the increments of the Wiener process, $Q_i$ and $P_i$ for $i=1,\ldots,s$ are the position and momentum internal stages, respectively, and the coefficients of the method $a_{ij}$, $\bar a_{ij}$, $\hat a_{ij}$, $b_{ij}$, $\bar b_{ij}$, $\hat b_{ij}$, $\alpha_i$, $\hat \alpha_i$, $\beta_i$, and $\hat \beta_i$ satisfy the conditions

\begin{subequations}
\label{eq:Symplectic conditions for SPRK}
\begin{align}
\label{eq:Symplectic conditions for SPRK 1}
\alpha_i \bar a_{ij} + \alpha_j a_{ji} &= \alpha_i \alpha_j, \\
\label{eq:Symplectic conditions for SPRK 2}
\beta_i \bar b_{ij} + \beta_j b_{ji} &= \beta_i \beta_j, \\
\label{eq:Symplectic conditions for SPRK 3}
\beta_i \bar a_{ij} + \alpha_j  b_{ji} &= \beta_i \alpha_j, \\
\label{eq:Symplectic conditions for SPRK 4}
\alpha_i \bar b_{ij} + \beta_j a_{ji} &= \alpha_i \beta_j, \\
\label{eq:Symplectic conditions for SPRK 5}
\alpha_i \hat a_{ij} + \hat \alpha_j a_{ji} &= \alpha_i \hat \alpha_j, \\
\label{eq:Symplectic conditions for SPRK 6}
\alpha_i \hat b_{ij} + \hat \beta_j a_{ji} &= \alpha_i \hat \beta_j, \\
\label{eq:Symplectic conditions for SPRK 7}
\beta_i \hat a_{ij} + \hat \alpha_j  b_{ji} &= \beta_i \hat \alpha_j, \\
\label{eq:Symplectic conditions for SPRK 8}
\beta_i \hat b_{ij} + \hat \beta_j b_{ji} &= \beta_i \hat \beta_j,
\end{align}
\end{subequations}

\noindent
for $i,j=1,\ldots,s$.
\end{define}

The partitioned Runge-Kutta method \eqref{eq:SPRK for stochastic forced Hamiltonian systems} can be represented by the tableau

\begin{equation}
\label{eq: SPRK tableau}
\begin{array}{c|c|c|c|c|c|c}
& a & \bar a & \hat a & b & \bar b & \hat b \\
\hline
& \alpha^T & \alpha^T & \hat \alpha^T & \beta^T & \beta^T & \hat \beta^T
\end{array},
\end{equation}

\noindent
where $a=(a_{ij})_{i,j=1\ldots s}$, $\alpha = (\alpha_i)_{i=1\ldots s}$, etc. The set of equations \eqref{eq:SPRK for stochastic forced Hamiltonian systems} forms a one-step numerical scheme. Knowing $q_k$ and $p_k$ at time $t_k$, one can solve Equations \eqref{eq:SPRK for stochastic forced Hamiltonian systems 1}-\eqref{eq:SPRK for stochastic forced Hamiltonian systems 2} for the internal stages $Q_i$ and $P_i$, and then use \eqref{eq:SPRK for stochastic forced Hamiltonian systems 3}-\eqref{eq:SPRK for stochastic forced Hamiltonian systems 4} to determine $q_{k+1}$ and $p_{k+1}$ at time $t_{k+1}$. If given $q_k$ and $p_{k+1}$ instead, one can also solve \eqref{eq:SPRK for stochastic forced Hamiltonian systems} for the remaining variables $Q_i$, $P_i$, $q_{k+1}$ and $p_k$. Note that since we have only used $\Delta W^r = \int_{t_k}^{t_{k+1}}dW^r(t)$ in \eqref{eq:SPRK for stochastic forced Hamiltonian systems}, we can in general expect mean-square convergence of order 1.0 at most. To obtain mean-square convergence of higher order we would also need to include higher-order multiple Stratonovich integrals, e.g., to achieve convergence of order 1.5 we would need to include terms involving $\Delta Z^r = \int_{t_k}^{t_{k+1}}\int_{t_k}^{t}dW^r(\xi)\,dt$ (see \cite{Burrage2000}, \cite{MilsteinRepin2001}, \cite{MilsteinRepin}). Below we prove that the Runge-Kutta method \eqref{eq:SPRK for stochastic forced Hamiltonian systems} with the conditions \eqref{eq:Symplectic conditions for SPRK} is indeed a stochastic Lagrange-d'Alembert method of the form \eqref{eq:Equations generating the numerical flow of the Hamiltonian system}.

\begin{thm} 
\label{thm:PRK method as a Lagrange-d'Alembert variational integrator}
The $s$-stage mean-square partitioned Runge-Kutta method \eqref{eq:SPRK for stochastic forced Hamiltonian systems} with the conditions \eqref{eq:Symplectic conditions for SPRK} is a stochastic Lagrange-d'Alembert variational integrator of the form \eqref{eq:Equations generating the numerical flow of the Hamiltonian system} with the discrete Hamiltonian

\begin{equation}
\label{eq:Discrete Hamiltonian for SPRK}
H^+_d(q_k,p_{k+1}) = p_{k+1} q_{k+1} - \Delta t \sum_{i=1}^s \alpha_i \bigg(P_i \frac{\partial H}{\partial p}(Q_i,P_i)-H(Q_i,P_i) \bigg) -  \sum_{r=1}^m \Delta W^r \sum_{i=1}^s  \beta_i \bigg(P_i \frac{\partial h_r}{\partial p}(Q_i,P_i)-h_r(Q_i,P_i) \bigg),
\end{equation}

\noindent
and the discrete forces

\begin{align}
\label{eq:Discrete forces for SPRK}
F^-_d(q_k,p_{k+1}) &= \Delta t \sum_{i=1}^s \hat \alpha_i \bigg(\frac{\partial Q_i}{\partial q_k}\bigg)^T F(Q_i,P_i) + \sum_{r=1}^m \Delta W^r \sum_{i=1}^s \hat \beta_i \bigg( \frac{\partial Q_i}{\partial q_k} \bigg)^Tf_r(Q_i,P_i), \nonumber \\
F^+_d(q_k,p_{k+1}) &= \Delta t \sum_{i=1}^s \hat \alpha_i \bigg( \frac{\partial Q_i}{\partial p_{k+1}} \bigg)^T F(Q_i,P_i) + \sum_{r=1}^m \Delta W^r \sum_{i=1}^s \hat \beta_i \bigg( \frac{\partial Q_i}{\partial p_{k+1}} \bigg)^T f_r(Q_i,P_i),
\end{align}

\noindent
where $q_{k+1}$, $p_k$, $Q_i$, and $P_i$ satisfy the system of equations \eqref{eq:SPRK for stochastic forced Hamiltonian systems} and are understood as functions of $q_k$ and $p_{k+1}$.
\end{thm}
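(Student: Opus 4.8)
The plan is to establish the single one-form identity
\[
dH^+_d \;=\; \big(p_k + F^-_d\big)\,dq_k \;+\; \big(q_{k+1}+F^+_d\big)\,dp_{k+1},
\]
from which, reading off the coefficients of the independent differentials $dq_k$ and $dp_{k+1}$, one obtains $p_k = D_1 H^+_d - F^-_d$ and $q_{k+1} = D_2 H^+_d - F^+_d$, i.e. precisely the generating-function relations \eqref{eq:Equations generating the numerical flow of the Hamiltonian system}. All quantities are viewed as functions of $(q_k,p_{k+1})$, and as a preliminary I would invoke the implicit function theorem to guarantee that, almost surely and for $\Delta t$, $\Delta W$ small enough, the system \eqref{eq:SPRK for stochastic forced Hamiltonian systems} defines $Q_i$, $P_i$, $q_{k+1}$ and $p_k$ as continuously differentiable functions of $(q_k,p_{k+1})$; the necessary regularity of the defining maps comes from (H1)--(H2).

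First I would differentiate the discrete Hamiltonian \eqref{eq:Discrete Hamiltonian for SPRK}, writing for brevity $\dot Q_i^{H}=\tfrac{\partial H}{\partial p}(Q_i,P_i)$ and $\dot Q_i^{h_r}=\tfrac{\partial h_r}{\partial p}(Q_i,P_i)$. The key simplification is that in each summand $d\big(P_i\dot Q_i^{H}-H(Q_i,P_i)\big)$ the two terms proportional to $dP_i$ cancel, because the coefficient of $dP_i$ in $-dH(Q_i,P_i)$ is exactly $-\dot Q_i^{H}$; what survives is $P_i\,d\dot Q_i^{H}-\tfrac{\partial H}{\partial q}(Q_i,P_i)\,dQ_i$, and analogously for each $h_r$. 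Substituting the differential of the update \eqref{eq:SPRK for stochastic forced Hamiltonian systems 3} for $dq_{k+1}$ and collecting terms, $dH^+_d$ reduces to
\[
\begin{aligned}
dH^+_d = {}& q_{k+1}\,dp_{k+1}+p_{k+1}\,dq_k+\Delta t\sum_i\alpha_i(p_{k+1}-P_i)\,d\dot Q_i^{H}+\sum_r\Delta W^r\sum_i\beta_i(p_{k+1}-P_i)\,d\dot Q_i^{h_r}\\
&+\Delta t\sum_i\alpha_i\tfrac{\partial H}{\partial q}(Q_i,P_i)\,dQ_i+\sum_r\Delta W^r\sum_i\beta_i\tfrac{\partial h_r}{\partial q}(Q_i,P_i)\,dQ_i.
\end{aligned}
\]
I would then substitute the differentiated stage relation $dQ_i-dq_k=\Delta t\sum_j a_{ij}\,d\dot Q_j^{H}+\sum_r\Delta W^r\sum_j b_{ij}\,d\dot Q_j^{h_r}$ together with the algebraic expressions for $p_{k+1}-P_i$ and $p_k-p_{k+1}$ obtained from the momentum equations \eqref{eq:SPRK for stochastic forced Hamiltonian systems 2} and \eqref{eq:SPRK for stochastic forced Hamiltonian systems 4}.

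The computation then splits into a Hamiltonian part and a forcing part. In the Hamiltonian part, after expanding $dQ_i$ and the unforced contributions to $p_{k+1}-P_i$, every internal differential $d\dot Q_j^{H}$ and $d\dot Q_j^{h_r}$ acquires a coefficient quadratic in the method weights, of the schematic shape $\alpha_i a_{ij}+\alpha_j\bar a_{ji}-\alpha_i\alpha_j$ and its $\beta$ and mixed analogues; after the relabeling $i\leftrightarrow j$, conditions \eqref{eq:Symplectic conditions for SPRK 1}--\eqref{eq:Symplectic conditions for SPRK 4} force all four such families to vanish. What remains is $p_{k+1}\,dq_k$ together with $\big(\Delta t\sum_i\alpha_i\tfrac{\partial H}{\partial q}(Q_i,P_i)+\sum_r\Delta W^r\sum_i\beta_i\tfrac{\partial h_r}{\partial q}(Q_i,P_i)\big)\,dq_k$, which by the momentum update \eqref{eq:SPRK for stochastic forced Hamiltonian systems 4} equals $\big(p_k+\Delta t\sum_i\hat\alpha_i F(Q_i,P_i)+\sum_r\Delta W^r\sum_i\hat\beta_i f_r(Q_i,P_i)\big)\,dq_k$. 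For the forcing part, the essential observation is that, since $dQ_i=\tfrac{\partial Q_i}{\partial q_k}\,dq_k+\tfrac{\partial Q_i}{\partial p_{k+1}}\,dp_{k+1}$, the discrete forces \eqref{eq:Discrete forces for SPRK} reassemble as $F^-_d\,dq_k+F^+_d\,dp_{k+1}=\Delta t\sum_i\hat\alpha_i F(Q_i,P_i)\cdot dQ_i+\sum_r\Delta W^r\sum_i\hat\beta_i f_r(Q_i,P_i)\cdot dQ_i$; expanding $dQ_i$ once more through the stage relation and matching against the forcing-weighted internal differentials carried by $(p_{k+1}-P_i)$ reduces the remaining identity to conditions \eqref{eq:Symplectic conditions for SPRK 5}--\eqref{eq:Symplectic conditions for SPRK 8}, again after the appropriate index swap. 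Assembling the two parts produces the target one-form identity, and hence \eqref{eq:Equations generating the numerical flow of the Hamiltonian system}.

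I expect the main obstacle to be the systematic bookkeeping of the preceding paragraph: keeping track of the four Hamiltonian and four forcing families of quadratic coefficients, carrying out the $i\leftrightarrow j$ relabeling, and checking that each family is annihilated by exactly one of the eight conditions \eqref{eq:Symplectic conditions for SPRK 1}--\eqref{eq:Symplectic conditions for SPRK 8}. A secondary point to handle with care is frame consistency --- all partial derivatives must be taken with $(q_k,p_{k+1})$ held as the independent pair, in agreement with the definition of $F^\pm_d$ in \eqref{eq:Discrete forces for SPRK} --- along with the almost-sure solvability and differentiability of the stages furnished by the implicit function theorem, on which the entire computation rests.
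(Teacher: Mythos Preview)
Your proposal is correct and follows essentially the same route as the paper: the paper differentiates $H^+_d$ with respect to $q_k$ and $p_{k+1}$ separately, substitutes the differentiated stage relations and the expressions for $p_{k+1}-P_i$ coming from \eqref{eq:SPRK for stochastic forced Hamiltonian systems 2}--\eqref{eq:SPRK for stochastic forced Hamiltonian systems 4}, and then invokes \eqref{eq:Symplectic conditions for SPRK 1}--\eqref{eq:Symplectic conditions for SPRK 4} followed by \eqref{eq:Symplectic conditions for SPRK 5}--\eqref{eq:Symplectic conditions for SPRK 8} to eliminate the internal terms, exactly as you describe. Your packaging of the two partial-derivative computations into a single one-form identity $dH^+_d=(p_k+F^-_d)\,dq_k+(q_{k+1}+F^+_d)\,dp_{k+1}$ is a tidy presentational choice but not a substantively different argument.
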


\begin{proof}
The proof involves straightforward, although rather lengthy and tedious algebraic manipulations. Therefore, for the clarity and brevity of the exposition, we only consider the one-dimensional noise case $m=1$ and point out the key steps of the derivations. Let us introduce the following shorthand notation:

\begin{align}
\label{eq:Shorthand notation}
&\dot Q_i \equiv \frac{\partial H}{\partial p}(Q_i,P_i), \quad \dot P_i \equiv -\frac{\partial H}{\partial q}(Q_i,P_i), \quad F_i \equiv F(Q_i,P_i), \nonumber \\
&\dot K_i \equiv \frac{\partial h}{\partial p}(Q_i,P_i),  \quad \dot G_i \equiv -\frac{\partial h}{\partial q}(Q_i,P_i), \quad f_i \equiv f(Q_i,P_i).
\end{align} 

\noindent
Differentiate each of the equations \eqref{eq:SPRK for stochastic forced Hamiltonian systems} with respect to $q_k$ and $p_{k+1}$ to express the Jacobians $\partial Q_i/\partial q_k$, $\partial P_i/\partial q_k$, $\partial q_{k+1}/\partial q_k$, $\partial p_k/\partial q_k$, and analogous Jacobians with respect to $p_{k+1}$, in terms of the derivatives of the terms \eqref{eq:Shorthand notation}. For instance, we have

\begin{equation}
\label{eq:The derivative of Pi with respect to p_k+1}
\frac{\partial P_i}{\partial p_{k+1}} = I + \Delta t \sum_{j=1}^s(\bar a_{ij}-\alpha_j)\frac{\partial \dot P_j}{\partial p_{k+1}} + \Delta W \sum_{j=1}^s(\bar b_{ij}-\beta_j)\frac{\partial \dot K_j}{\partial p_{k+1}} + \Delta t \sum_{j=1}^s(\hat a_{ij}-\hat \alpha_j)\frac{\partial F_j}{\partial p_{k+1}} + \Delta W \sum_{j=1}^s(\hat b_{ij}-\hat \beta_j)\frac{\partial f_j}{\partial p_{k+1}},
\end{equation}

\noindent
where $I$ denotes the $N\times N$ identity matrix. Let us now calculate the derivative of the discrete Hamiltonian \eqref{eq:Discrete Hamiltonian for SPRK} with respect to $p_{k+1}$. After substituting the Jacobians \eqref{eq:The derivative of Pi with respect to p_k+1} and using \eqref{eq:SPRK for stochastic forced Hamiltonian systems 4} to replace $p_{k+1}$, we obtain the expression

\begin{align}
\label{eq:Calculating the derivative of the discrete Hamiltonian - part 1}
D_2H^+_d(q_k,p_{k+1}) = q_{k+1} &+ \Delta t^2 \sum_{i,j=1}^s \alpha_i \hat \alpha_j \bigg( \frac{\partial \dot Q_i}{\partial p_{k+1}} \bigg)^T F_j + \Delta t \Delta W \sum_{i,j=1}^s \alpha_i \hat \beta_j \bigg( \frac{\partial \dot Q_i}{\partial p_{k+1}} \bigg)^T f_j \nonumber \\
&+ \Delta t \Delta W \sum_{i,j=1}^s \beta_i \hat \alpha_j \bigg( \frac{\partial \dot K_i}{\partial p_{k+1}} \bigg)^T F_j + \Delta W^2 \sum_{i,j=1}^s \beta_i \hat \beta_j \bigg( \frac{\partial \dot K_i}{\partial p_{k+1}} \bigg)^T f_j \nonumber \\
&+\Delta t \sum_{i=1}^s \alpha_i \bigg( \frac{\partial \dot Q_i}{\partial p_{k+1}} \bigg)^T (p_k-P_i) + \Delta W \sum_{i=1}^s \beta_i \bigg( \frac{\partial \dot K_i}{\partial p_{k+1}} \bigg)^T (p_k-P_i) \nonumber \\
&+\Delta t^2 \sum_{i,j=1}^s (\alpha_i \alpha_j - \alpha_j a_{ji} ) \bigg( \frac{\partial \dot Q_i}{\partial p_{k+1}} \bigg)^T \dot P_j + \Delta t \Delta W \sum_{i,j=1}^s (\alpha_i \beta_j - \beta_j a_{ji} ) \bigg( \frac{\partial \dot Q_i}{\partial p_{k+1}} \bigg)^T \dot G_j \nonumber \\
&+\Delta t \Delta W \sum_{i,j=1}^s (\beta_i \alpha_j - \alpha_j b_{ji} ) \bigg( \frac{\partial \dot K_i}{\partial p_{k+1}} \bigg)^T \dot P_j + \Delta W^2 \sum_{i,j=1}^s (\beta_i \beta_j - \beta_j b_{ji} ) \bigg( \frac{\partial \dot K_i}{\partial p_{k+1}} \bigg)^T \dot G_j.
\end{align}

\noindent
After using \eqref{eq:Symplectic conditions for SPRK 1}-\eqref{eq:Symplectic conditions for SPRK 4} in the last four terms (e.g., $\alpha_i \alpha_j - \alpha_j a_{ji} = \alpha_i \bar a_{ij}$), and substituting \eqref{eq:SPRK for stochastic forced Hamiltonian systems 2} for $P_i$, we get

\begin{align}
\label{eq:Calculating the derivative of the discrete Hamiltonian - part 2}
D_2H^+_d(q_k,p_{k+1}) = q_{k+1} &+\Delta t^2 \sum_{i,j=1}^s (\alpha_i \hat \alpha_j - \alpha_i \hat a_{ij} ) \bigg( \frac{\partial \dot Q_i}{\partial p_{k+1}} \bigg)^T F_j  + \Delta t \Delta W \sum_{i,j=1}^s (\alpha_i \hat \beta_j - \alpha_i \hat b_{ij} ) \bigg( \frac{\partial \dot Q_i}{\partial p_{k+1}} \bigg)^T f_j \nonumber \\
&+\Delta t \Delta W \sum_{i,j=1}^s (\beta_i \hat \alpha_j - \beta_i \hat a_{ij} ) \bigg( \frac{\partial \dot K_i}{\partial p_{k+1}} \bigg)^T F_j + \Delta W^2 \sum_{i,j=1}^s (\beta_i \hat \beta_j - \beta_i \hat b_{ij} ) \bigg( \frac{\partial \dot K_i}{\partial p_{k+1}} \bigg)^T f_j.
\end{align}

\noindent
By using the conditions \eqref{eq:Symplectic conditions for SPRK 5}-\eqref{eq:Symplectic conditions for SPRK 8} and collecting terms, we finally arrive at

\begin{align}
\label{eq:Calculating the derivative of the discrete Hamiltonian - part 3}
D_2H^+_d(q_k,p_{k+1}) = q_{k+1} + \Delta t \sum_{i=1}^s \hat \alpha_i \bigg( \frac{\partial Q_i}{\partial p_{k+1}} \bigg)^T F_i  + \Delta W \sum_{i=1}^s \hat \beta_i \bigg( \frac{\partial Q_i}{\partial p_{k+1}} \bigg)^T f_i = q_{k+1} + F^+_d(q_k,p_{k+1}).
\end{align}

\noindent
In a similar fashion we derive

\begin{align}
\label{eq:Calculating the derivative of the discrete Hamiltonian - part 4}
D_1H^+_d(q_k,p_{k+1}) = p_k + F^-_d(q_k,p_{k+1}),
\end{align}

\noindent
which completes the proof.\\
\end{proof}

\subsubsection{Convergence}
\label{sec:Convergence of strong methods}

Mean-square convergence concentrates on pathwise approximations of the exact solutions (see \cite{KloedenPlatenSDE}, \cite{MilsteinBook}). Let $\bar z(t) = (\bar q(t),\bar p(t))$ be the exact solution to \eqref{eq: Stochastic dissipative Hamiltonian system} with the initial conditions $q_0$ and $p_0$, and let $z_k = (q_k,p_k)$ denote the numerical solution at time $t_k$ obtained by applying \eqref{eq:SPRK for stochastic forced Hamiltonian systems} iteratively $k$ times with the constant time step $\Delta t$. The numerical solution is said to converge in the mean-square sense with global order $r$ if there exist $\delta>0$ and a constant $C>0$ such that for all $\Delta t \in (0,\delta)$ we have

\begin{equation}
\label{eq:Definition of mean-square convergence}
\sqrt{E(\| z_K-\bar z(T)\|^2 )} \leq C\Delta t^r,
\end{equation}

\noindent
where $T = K\Delta t$, as defined before, and $E$ denotes the expected value. In principle, in order to determine the mean-square order of convergence of the Lagrange-d'Alembert partitioned Runge-Kutta method \eqref{eq:SPRK for stochastic forced Hamiltonian systems} we need to calculate the power series expansions of $q_{k+1}$ and $p_{k+1}$ in terms of the powers of $\Delta t$ and $\Delta W^i$, and compare them to the Stratonovich-Taylor expansions for the exact solution $\bar q(t_k+\Delta t)$ and $\bar p(t_k+\Delta t)$ (see \cite{Burrage2000}, \cite{KloedenPlatenSDE}, \cite{MilsteinBook}). As mentioned in Section~\ref{sec:Construction}, the mean-square order of the method \eqref{eq:SPRK for stochastic forced Hamiltonian systems} cannot exceed 1.0. Below we provide the conditions that have to be satisfied by the coefficients of the method \eqref{eq:SPRK for stochastic forced Hamiltonian systems} in order for it to be convergent.

\begin{thm}
\label{thm:Mean-square convergence theorem}
Suppose that, in addition to conditions (H1)-(H3), the functions $H(q,p)$, $F(q,p)$, and $h_i(q,p)$, $f_i(q,p)$ for $i=1,\ldots,m$ have all the necessary partial derivatives. Let the coefficients of the method \eqref{eq:SPRK for stochastic forced Hamiltonian systems} satisfy the conditions

\begin{align}
\label{eq:SPRK order conditions}
&\sum_{i=1}^s \alpha_i = \sum_{i=1}^s \hat \alpha_i = \sum_{i=1}^s \beta_i = \sum_{i=1}^s \hat \beta_i=1, \nonumber \\
&\sum_{i,j=1}^s \beta_i b_{ij}=\sum_{i,j=1}^s \beta_i \bar b_{ij} = \sum_{i,j=1}^s \beta_i \hat b_{ij} = \sum_{i,j=1}^s \hat \beta_i b_{ij}=\sum_{i,j=1}^s \hat \beta_i \bar b_{ij} = \sum_{i,j=1}^s \hat \beta_i \hat b_{ij} = \frac{1}{2}.
\end{align}

\noindent
If the noise is commutative, that is, if the following conditions are satisfied

\begin{equation}
\label{eq: Commutation conditions}
\Gamma_{ij}=\Gamma_{ji}, \qquad \Lambda_{ij}=\Lambda_{ji}, \qquad \text{for all $i,j=1,\ldots,m$},
\end{equation}

\noindent
where the vectors $\Gamma_{ij}$ and $\Lambda_{ij}$ for each $i,j=1,\ldots,m$ are defined as

\begin{align}
\label{eq: Gamma and Lambda terms definition}
\Gamma_{ij} &=  \frac{\partial^2 h_j}{\partial p \partial q} \frac{\partial h_i}{\partial p} - \frac{\partial^2 h_j}{\partial p^2} \frac{\partial h_i}{\partial q}+\frac{\partial^2 h_j}{\partial p^2} f_i, \nonumber \\
 \Lambda_{ij} &= -\frac{\partial^2 h_j}{\partial q^2} \frac{\partial h_i}{\partial p} + \frac{\partial^2 h_j}{\partial q \partial p} \frac{\partial h_i}{\partial q} + \frac{\partial f_j}{\partial q} \frac{\partial h_i}{\partial p}-\frac{\partial f_j}{\partial p} \frac{\partial h_i}{\partial q}-\frac{\partial^2 h_j}{\partial q \partial p} f_i+\frac{\partial f_j}{\partial p} f_i,
\end{align}

\noindent
then the method \eqref{eq:SPRK for stochastic forced Hamiltonian systems} is convergent with mean-square order 1.0. If the noise is noncommutative, then the method \eqref{eq:SPRK for stochastic forced Hamiltonian systems} is convergent with mean-square order 0.5.
\end{thm}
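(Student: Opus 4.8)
The plan is to reduce the global convergence statement to local (one-step) error estimates via Milstein's fundamental theorem on mean-square convergence of one-step methods (see, e.g., \cite{MilsteinBook}, \cite{KloedenPlatenSDE}). Writing $\bar z_{t_k,z_k}(t_{k+1})$ for the exact flow over one step started from $z_k=(q_k,p_k)$, and $z_{k+1}$ for one step of \eqref{eq:SPRK for stochastic forced Hamiltonian systems}, it suffices to establish conditional bounds of the form
\[
\big| E[\bar z_{t_k,z_k}(t_{k+1}) - z_{k+1} \mid \mathcal{F}_{t_k}] \big| \leq C\big(1+|z_k|^2\big)^{1/2}\Delta t^{p_1}, \qquad \big(E[|\bar z_{t_k,z_k}(t_{k+1}) - z_{k+1}|^2 \mid \mathcal{F}_{t_k}]\big)^{1/2} \leq C\big(1+|z_k|^2\big)^{1/2}\Delta t^{p_2},
\]
with $p_1 \geq p_2+\tfrac12$ and $p_2 \geq \tfrac12$; the global mean-square order is then $p_2-\tfrac12$. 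The global Lipschitz hypothesis (H3) supplies the uniform moment bounds and guarantees that, for $\Delta t$ small, the stage equations \eqref{eq:SPRK for stochastic forced Hamiltonian systems 1}--\eqref{eq:SPRK for stochastic forced Hamiltonian systems 2} are uniquely solvable and the remainders of all expansions are controlled; the smoothness hypothesis on the partial derivatives makes the Taylor expansions below legitimate. Thus the remaining work is purely the computation and comparison of local expansions.

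First I would write the Stratonovich--Taylor expansion of the exact one-step flow using the vector fields $X$ and $Y_i$ of \eqref{eq:Vector fields X and Y}:
\[
\bar z(t_{k+1}) = z_k + X\,\Delta t + \sum_{i=1}^m Y_i\,\Delta W^i + \sum_{i,j=1}^m \big(\nabla_{Y_i} Y_j\big)\,J_{ij} + \ldots,
\]
where $J_{ij}=\int_{t_k}^{t_{k+1}}\int_{t_k}^{s}\circ dW^i(\tau)\circ dW^j(s)$ is the iterated Stratonovich integral, $\nabla_{Y_i}Y_j$ denotes the derivative of $Y_j$ along $Y_i$, and the omitted terms contribute errors of mean-square size $\Delta t^{3/2}$ or smaller. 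A direct computation identifies the $q$-component and $p$-component of $\nabla_{Y_i}Y_j$ with exactly the vectors $\Gamma_{ij}$ and $\Lambda_{ij}$ of \eqref{eq: Gamma and Lambda terms definition}. In parallel I would expand the scheme itself: solving the stage equations by fixed-point iteration as power series in $\Delta t$ and $\Delta W^r$, substituting into \eqref{eq:SPRK for stochastic forced Hamiltonian systems 3}--\eqref{eq:SPRK for stochastic forced Hamiltonian systems 4}, and collecting terms of each order.

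Matching the two expansions is the crux of the argument. The first-line conditions $\sum_i\alpha_i=\sum_i\hat\alpha_i=\sum_i\beta_i=\sum_i\hat\beta_i=1$ in \eqref{eq:SPRK order conditions} force the $\Delta t$- and $\Delta W^i$-terms of the scheme to match those of the exact flow, while the quadratic conditions $\sum_{i,j}\beta_i b_{ij}=\tfrac12$ and its analogues reproduce the diagonal double-integral terms (since $J_{ii}=\tfrac12(\Delta W^i)^2$) and the symmetric part of the off-diagonal terms (since $J_{ij}+J_{ji}=\Delta W^i\Delta W^j$). Because the scheme uses only the increments $\Delta W^r$, it reproduces precisely the symmetric combination $\tfrac12(\nabla_{Y_i}Y_j+\nabla_{Y_j}Y_i)\Delta W^i\Delta W^j$ and is blind to the L\'evy area $\tfrac12(J_{ij}-J_{ji})$, whose coefficient is the antisymmetric part $\tfrac12(\nabla_{Y_i}Y_j-\nabla_{Y_j}Y_i)=\tfrac12\big(\Gamma_{ij}-\Gamma_{ji},\,\Lambda_{ij}-\Lambda_{ji}\big)$. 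The commutativity hypothesis \eqref{eq: Commutation conditions}, $\Gamma_{ij}=\Gamma_{ji}$ and $\Lambda_{ij}=\Lambda_{ji}$, is exactly the statement that this antisymmetric part vanishes; consequently the full double-integral term is captured, the local error is $O(\Delta t^{3/2})$ in mean-square and $O(\Delta t^2)$ in mean, so $p_2=\tfrac32$, $p_1=2$, and the global order is $1.0$. In the noncommutative case the uncaptured L\'evy-area terms have mean-square size $\Delta t$ while their conditional mean vanishes, giving $p_2=1$, $p_1=\tfrac32$, and global order $0.5$.

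The main obstacle I anticipate is the faithful bookkeeping of the Stratonovich--Taylor expansions of both the exact solution and the Runge--Kutta stages up to the required order, and in particular verifying that every contribution of size $\Delta t^{3/2}$ other than the L\'evy-area term is correctly matched by the order conditions \eqref{eq:SPRK order conditions}; the algebra is lengthy, which is presumably why the theorem is stated through these bare conditions rather than a fully expanded comparison (cf.\ \cite{Burrage2000}). A secondary point requiring care is confirming that the conditional mean of the local error is genuinely one half-order better than its mean-square size (so that $p_1=p_2+\tfrac12$), which rests on the vanishing conditional expectation of the first-order martingale increments together with the correct reproduction of the It\^o--Stratonovich drift correction by the order conditions.
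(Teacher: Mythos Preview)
Your proposal is correct and follows essentially the same route as the paper: reduce to local one-step error via Milstein's fundamental theorem, expand both the exact Stratonovich--Taylor flow and the scheme (via iterated substitution in the stage equations) to the relevant order, and then match terms using the order conditions \eqref{eq:SPRK order conditions}, with the commutativity hypothesis handling the off-diagonal $J_{ij}$ contributions through $J_{ij}+J_{ji}=\Delta W^i\Delta W^j$. Your framing in terms of the directional derivative $\nabla_{Y_i}Y_j$ and the L\'evy area is a slightly more geometric restatement of what the paper does computationally by writing out the scheme's $\bar\Gamma_{ij},\bar\Lambda_{ij}$ coefficients explicitly and comparing them to $\Gamma_{ij},\Lambda_{ij}$, but the content and the resulting local error exponents $(p_1,p_2)=(2,\tfrac32)$ and $(\tfrac32,1)$ are identical.
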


\begin{proof}
General order conditions for stochastic non-partitioned Runge-Kutta methods have been analyzed in \cite{Burrage1998} and \cite{Burrage2000}. Conditions for mean-square convergence of order 1.0 for stochastic partitioned Runge-Kutta methods with a one-dimensional noise have been derived in \cite{MaDing2015}. However, the method \eqref{eq:SPRK for stochastic forced Hamiltonian systems} is more general, as we allow a multidimensional noise, and different coefficients are applied to the Hamiltonian and forcing terms, but the method of proof is similar to the proof of Theorem~2.1 in \cite{MaDing2015}, therefore we only present the main steps. To simplify the notation, denote $\alpha=(\alpha_1,\ldots,\alpha_s)^T$, $b=(b_{ij})_{i,j=1,\ldots,s}$, and similarly for the remaining coefficients of the method. Let also $e=(1,1,\ldots,1)^T$ be an $s$-dimensional vector. Then the conditions \eqref{eq:SPRK order conditions} can be written more compactly, e.g., $\alpha^T e =1$ or $\beta^T b e = 1/2$. We first determine power expansions of the internal stages $Q_i$ and $P_i$ in terms of the powers of $\Delta t$ and $\Delta W^i$. We plug in series expansions for $Q_i$ and $P_i$ in Equations \eqref{eq:SPRK for stochastic forced Hamiltonian systems 1}-\eqref{eq:SPRK for stochastic forced Hamiltonian systems 2}, and determine their coefficients by expanding the derivatives of the Hamiltonians and forcing terms into Taylor series around $(q_k,p_k)$. Then we plug in thus found series expansions into Equations \eqref{eq:SPRK for stochastic forced Hamiltonian systems 3}-\eqref{eq:SPRK for stochastic forced Hamiltonian systems 4}, and again expand the derivatives of the Hamiltonians and forcing terms into Taylor series around $(q_k,p_k)$. This way we obtain the series expansions of $q_{k+1}$ and $p_{k+1}$ as

\begin{align}
\label{eq:Series expansion for the stochastic PRK}
q_{k+1} &= q_k + (\alpha^T e)\frac{\partial H}{\partial p}\Delta t + (\beta^T e)\sum_{i=1}^m \frac{\partial h_i}{\partial p}\Delta W^i + \frac{1}{2} \sum_{i=1}^M \bar \Gamma_{ii} (\Delta W^i)^2 + \frac{1}{2}\sum_{i=1}^M \sum_{\substack{j=1 \\ j \not = i}}^M \bar \Gamma_{ij} \Delta W^i \Delta W^j + \ldots, \nonumber \\
p_{k+1}&= p_k - (\alpha^T e)\frac{\partial H}{\partial q}\Delta t + (\hat \alpha^T e)F\Delta t - (\beta^T e)\sum_{i=1}^m \frac{\partial h_i}{\partial q}\Delta W^i + (\hat \beta^T e)\sum_{i=1}^m f_i\Delta W^i\nonumber \\
 &\phantom{= p_k-} + \frac{1}{2} \sum_{i=1}^M \bar \Lambda_{ii} (\Delta W^i)^2 + \frac{1}{2}\sum_{i=1}^M \sum_{\substack{j=1 \\ j \not = i}}^M \bar \Lambda_{ij} \Delta W^i \Delta W^j +\ldots,
\end{align}

\noindent
where the vectors $\bar \Gamma_{ij}$ and $\bar \Lambda_{ij}$ for each $i,j=1,\ldots,m$ are defined as

\begin{align}
\label{eq: Gamma bar and Lambda bar terms definition}
\bar \Gamma_{ij} &=  2 (\beta^T b e)\frac{\partial^2 h_j}{\partial p \partial q} \frac{\partial h_i}{\partial p} - 2 (\beta^T \bar b e)\frac{\partial^2 h_j}{\partial p^2} \frac{\partial h_i}{\partial q} + 2 (\beta^T \hat b e) \frac{\partial^2 h_j}{\partial p^2} f_i, \nonumber \\
\bar \Lambda_{ij} &= -2 (\beta^T b e)\frac{\partial^2 h_j}{\partial q^2} \frac{\partial h_i}{\partial p} + 2 (\beta^T \bar b e)\frac{\partial^2 h_j}{\partial q \partial p} \frac{\partial h_i}{\partial q} + 2 (\hat \beta^T b e)\frac{\partial f_j}{\partial q} \frac{\partial h_i}{\partial p}-2 (\hat \beta^T \bar b e)\frac{\partial f_j}{\partial p} \frac{\partial h_i}{\partial q} \nonumber \\
                  &\phantom{=-}-2 (\beta^T \hat b e)\frac{\partial^2 h_j}{\partial q \partial p} f_i+2 (\hat \beta^T \hat b e)\frac{\partial f_j}{\partial p} f_i,
\end{align}

\noindent
and the forcing terms and the derivatives of the Hamiltonians are evaluated at $(q_k, p_k)$. Let $\bar q(t;q_k,p_k)$ and $\bar p(t;q_k,p_k)$ denote the exact solution of \eqref{eq: Stochastic dissipative Hamiltonian system} such that $\bar q(t_k;q_k,p_k)=q_k$ and $\bar p(t_k;q_k,p_k)=p_k$. Using \eqref{eq: Stochastic dissipative Hamiltonian system} we calculate the Stratonovich-Taylor expansions for $\bar q(t_{k+1};q_k,p_k)$ and $\bar p(t_{k+1};q_k,p_k)$ as (see \cite{KloedenPlatenSDE})

\begin{align}
\label{eq:Stratonovich-Taylor expansion with multidimensional noise}
\bar q(t_{k+1};q_k,p_k) &= q_k + \frac{\partial H}{\partial p}\Delta t + \sum_{i=1}^m \frac{\partial h_i}{\partial p}\Delta W^i + \frac{1}{2} \sum_{i=1}^m \Gamma_{ii} (\Delta W^i)^2 + \sum_{i=1}^m \sum_{\substack{j=1 \\ j \not = i}}^m \Gamma_{ij} J_{ij} + \ldots, \nonumber \\
\bar p(t_{k+1};q_k,p_k)&= p_k + \bigg(- \frac{\partial H}{\partial q}+F \bigg) \Delta t + \sum_{i=1}^m \bigg( -\frac{\partial h_i}{\partial q}+f_i \bigg)\Delta W^i + \frac{1}{2} \sum_{i=1}^m \Lambda_{ii} (\Delta W^i)^2 + \sum_{i=1}^m \sum_{\substack{j=1 \\ j \not = i}}^m \Lambda_{ij} J_{ij} +\ldots,
\end{align}

\noindent
where $J_{ij} = \int_{t_k}^{t_{k+1}}\int_{t_k}^{t}dW^i(\tau)\circ dW^j(t)$ denotes a double Stratonovich integral, $\Gamma_{ij}$ and $\Lambda_{ij}$ have been defined in \eqref{eq: Gamma and Lambda terms definition}, and the forcing terms and the derivatives of the Hamiltonians are again evaluated at $(q_k, p_k)$. Assuming the conditions \eqref{eq:SPRK order conditions} are satisfied, we have that $\bar \Gamma_{ij} = \Gamma_{ij}$ and $\bar \Lambda_{ij} = \Lambda_{ij}$, but comparing \eqref{eq:Series expansion for the stochastic PRK} and \eqref{eq:Stratonovich-Taylor expansion with multidimensional noise}, we find that in the general case of noncommutative noise not all first order terms agree, and therefore we only have the local error estimates 

\begin{align}
\label{eq:Estimation of the remainders for multidimensional noise}
E\big(q_{k+1}-\bar q(t_{k+1};q_k,p_k)\big) &= O(\Delta t^{\frac{3}{2}}), \qquad \sqrt{E\big( \|q_{k+1}-\bar q(t_{k+1};q_k,p_k)\|^2 \big)} = O(\Delta t), \nonumber \\
E\big(p_{k+1}-\bar p(t_{k+1};q_k,p_k)\big) &= O(\Delta t^{\frac{3}{2}}), \qquad \sqrt{E\big( \|p_{k+1}-\bar p(t_{k+1};q_k,p_k)\|^2 \big)} = O(\Delta t).
\end{align}

\noindent
Theorem~1.1 from \cite{MilsteinBook} then implies that the method \eqref{eq:SPRK for stochastic forced Hamiltonian systems} has mean-square order 0.5. However, if the noise is commutative, then using the property $J_{ij}+J_{ji}=\Delta W^i \Delta W^j$ (see \cite{KloedenPlatenSDE}, \cite{MilsteinBook}), one can easily show

\begin{equation}
\label{eq: Jij terms from the Stratonovich-Taylor expansion}
\sum_{i=1}^m \sum_{\substack{j=1 \\ j \not = i}}^m \Gamma_{ij} J_{ij} = \frac{1}{2}\sum_{i=1}^m \sum_{\substack{j=1 \\ j \not = i}}^m \Gamma_{ij} \Delta W^i \Delta W^j, \qquad \quad \sum_{i=1}^m \sum_{\substack{j=1 \\ j \not = i}}^m \Lambda_{ij} J_{ij} = \frac{1}{2}\sum_{i=1}^m \sum_{\substack{j=1 \\ j \not = i}}^m \Lambda_{ij} \Delta W^i \Delta W^j.
\end{equation} 

\noindent
In that case all first-order terms in the expansions \eqref{eq:Series expansion for the stochastic PRK} and \eqref{eq:Stratonovich-Taylor expansion with multidimensional noise} agree, and we have the local error estimates

\begin{align}
\label{eq:Estimation of the remainders for 1D noise}
E\big(q_{k+1}-\bar q(t_{k+1};q_k,p_k)\big) &= O(\Delta t^2), \qquad \sqrt{E\big( \|q_{k+1}-\bar q(t_{k+1};q_k,p_k)\|^2 \big)} = O(\Delta t^{\frac{3}{2}}), \nonumber \\
E\big(p_{k+1}-\bar p(t_{k+1};q_k,p_k)\big) &= O(\Delta t^2), \qquad \sqrt{E\big( \|p_{k+1}-\bar p(t_{k+1};q_k,p_k)\|^2 \big)} = O(\Delta t^{\frac{3}{2}}).
\end{align}

\noindent
Theorem~1.1 from \cite{MilsteinBook} then implies that the method \eqref{eq:SPRK for stochastic forced Hamiltonian systems} has mean-square order $1.0$.\\
\end{proof}

\noindent
In the case of a one-dimensional noise the commutation condition \eqref{eq: Commutation conditions} is trivially satisfied, therefore we have the following corollary.

\begin{corollary}
\label{thm:Mean-square convergence corollary}
Under the assumptions of Theorem~\ref{thm:Mean-square convergence theorem}, the method \eqref{eq:SPRK for stochastic forced Hamiltonian systems} is convergent with mean-square order 1.0 for systems driven by a one-dimensional noise.
\end{corollary}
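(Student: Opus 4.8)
The plan is to observe that the corollary is an immediate specialization of the commutative-noise case of Theorem~\ref{thm:Mean-square convergence theorem}, so almost all of the work has already been done. First I would note that when $m=1$ the only admissible index pair in the commutation conditions \eqref{eq: Commutation conditions} is $(i,j)=(1,1)$, so these conditions reduce to $\Gamma_{11}=\Gamma_{11}$ and $\Lambda_{11}=\Lambda_{11}$, which hold trivially. Hence a one-dimensional noise is always commutative in the sense required by the theorem.

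Next I would point out that the mechanism responsible for the order reduction in the noncommutative case does not even arise here. The discrepancy between the numerical expansion \eqref{eq:Series expansion for the stochastic PRK} and the Stratonovich--Taylor expansion \eqref{eq:Stratonovich-Taylor expansion with multidimensional noise} stems entirely from the off-diagonal terms indexed by $j\neq i$, which contrast the iterated integrals $J_{ij}$ with the products $\tfrac{1}{2}\Delta W^i\Delta W^j$. When $m=1$ the inner sums $\sum_{j\neq i}$ are empty, so these terms are simply absent, and the only second-order stochastic contributions are the diagonal ones $\tfrac{1}{2}\Gamma_{11}(\Delta W^1)^2$ and $\tfrac{1}{2}\Lambda_{11}(\Delta W^1)^2$. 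These match automatically once the order conditions \eqref{eq:SPRK order conditions} force $\bar\Gamma_{11}=\Gamma_{11}$ and $\bar\Lambda_{11}=\Lambda_{11}$.

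Finally I would invoke the first conclusion of Theorem~\ref{thm:Mean-square convergence theorem}: since the commutation hypothesis is met, the local error estimates \eqref{eq:Estimation of the remainders for 1D noise} hold, and Theorem~1.1 of \cite{MilsteinBook} yields mean-square global order $1.0$. There is no substantive obstacle to overcome; the only point requiring a moment's care is the (elementary) verification that the commutation condition is vacuously true for scalar noise, after which the corollary follows directly from the already-established theorem.
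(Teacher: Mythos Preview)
Your proposal is correct and matches the paper's own reasoning: the paper simply observes, in one sentence preceding the corollary, that for a one-dimensional noise the commutation condition \eqref{eq: Commutation conditions} is trivially satisfied, so the order-1.0 conclusion of Theorem~\ref{thm:Mean-square convergence theorem} applies. Your write-up is more detailed (noting in particular that the off-diagonal sums in the expansions are empty for $m=1$), but the argument is the same.
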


\subsubsection{Examples}
\label{sec:Examples of strong methods}

In the construction of the integrator \eqref{eq:SPRK for stochastic forced Hamiltonian systems} we may choose the number of stages $s$. In the deterministic case, the higher the number of stages, the higher order of convergence can be achieved (see \cite{HLWGeometric}, \cite{HWODE1}, \cite{HWODE2}). In our case, however, as explained earlier, we cannot in general achieve mean-square order of convergence higher than 1.0, because we only used $\Delta W^r$ in \eqref{eq:SPRK for stochastic forced Hamiltonian systems}. Since the system \eqref{eq:SPRK for stochastic forced Hamiltonian systems 1}-\eqref{eq:SPRK for stochastic forced Hamiltonian systems 2} requires solving $2sN$ equations for $2sN$ variables, from the computational point of view it makes sense to only consider methods with low values of $s$. In this work we focus on the following classical numerical integration formulas (one can easily verify that the conditions \eqref{eq:Symplectic conditions for SPRK} and \eqref{eq:SPRK order conditions} are satisfied for the discussed methods).

\begin{enumerate}
	\item \emph{Stochastic midpoint method} \\ Using the midpoint rule we obtain a one-stage non-partitioned Runge-Kutta method represented by the tableau
	
	\begin{equation}
		\label{eq: Stochastic midpoint method tableau}
		\begin{array}{c|c|c|c|c|c|c}
			& \frac{1}{2} & \frac{1}{2} & \frac{1}{2} & \frac{1}{2} & \frac{1}{2} & \frac{1}{2} \\
			\hline
			& 1 & 1 & 1 & 1 & 1 & 1
		\end{array}.
	\end{equation}
	
	\noindent
	Noting that $Q_1=(q_k+q_{k+1})/2$ and $P_1=(p_k+p_{k+1})/2$, this method can be written as
	
	\begin{align}
	\label{eq:Stochastic midpoint method}
	q_{k+1} &= q_k + \frac{\partial H}{\partial p} \bigg(\frac{q_k+q_{k+1}}{2},\frac{p_k+p_{k+1}}{2} \bigg)\Delta t 
	               + \sum_{i=1}^m \frac{\partial h_i}{\partial p} \bigg(\frac{q_k+q_{k+1}}{2},\frac{p_k+p_{k+1}}{2} \bigg)\Delta W^i, \nonumber \\
	p_{k+1} &= p_k + \bigg[ -\frac{\partial H}{\partial q} \bigg(\frac{q_k+q_{k+1}}{2},\frac{p_k+p_{k+1}}{2} \bigg) + F\bigg(\frac{q_k+q_{k+1}}{2},\frac{p_k+p_{k+1}}{2} \bigg) \bigg] \Delta t \nonumber \\ 
	        &\phantom{= p_k}+ \sum_{i=1}^m \bigg[ -\frac{\partial h_i}{\partial q} \bigg(\frac{q_k+q_{k+1}}{2},\frac{p_k+p_{k+1}}{2} \bigg) + f_i\bigg(\frac{q_k+q_{k+1}}{2},\frac{p_k+p_{k+1}}{2} \bigg) \bigg]\Delta W^i.
	\end{align}
	
	\noindent
	The stochastic midpoint method was considered in \cite{MilsteinRepin} and \cite{MaDing2015} in the context of symplectic integrators for stochastic Hamiltonian systems without forcing; see also \cite{HolmTyranowskiGalerkin}. This example demonstrates that the stochastic midpoint method retains its geometric properties also for forced systems. It is an implicit method and in general one has to solve $2N$ equations for $2N$ unknowns. However, if the Hamiltonians are separable, that is, $H(q,p)=T_0(p)+U_0(q)$ and $h_i(q,p)=T_i(p)+U_i(q)$, then $q_{k+1}$ from the first equation can be substituted into the second one. In that case only $N$ nonlinear equations have to be solved for $p_{k+1}$.

	\item \emph{Stochastic St{\"o}rmer-Verlet method} \\ A generalization of the classical St{\"o}rmer-Verlet method can be obtained by choosing the tableau
	
	\begin{equation}
		\label{eq: Stochastic Stormer-Verlet method tableau}
		\begin{array}{c|cc|cc|cc|cc|cc|cc}
			& 0 & 0 & \frac{1}{2} & 0 & \frac{1}{2} & 0 & 0 & 0 & \frac{1}{2} & 0 & \frac{1}{2} & 0 \\

			& \frac{1}{2} & \frac{1}{2}	& \frac{1}{2} & 0	& \frac{1}{2} & 0 & \frac{1}{2} & \frac{1}{2}	& \frac{1}{2} & 0	& \frac{1}{2} & 0\\
			\hline
			& \frac{1}{2} & \frac{1}{2}	& \frac{1}{2} & \frac{1}{2}	& \frac{1}{2} & \frac{1}{2}	& \frac{1}{2} & \frac{1}{2}	& \frac{1}{2} & \frac{1}{2}	& \frac{1}{2} & \frac{1}{2}
		\end{array}.
	\end{equation}
	
 \noindent	
 Noting that $Q_1=q_k$, $Q_2=q_{k+1}$, and $P_1=P_2$, this method can be more efficiently written as
	
	\begin{align}
	\label{eq:Stochastic Stormer-Verlet method}
	P_1 &= p_k + \frac{1}{2} \bigg[ -\frac{\partial H}{\partial q} \big(q_k, P_1 \big) + F\big(q_k, P_1 \big) \bigg] \Delta t 
	           + \frac{1}{2} \sum_{i=1}^m \bigg[ -\frac{\partial h_i}{\partial q} \big(q_k, P_1 \big) + f_i\big(q_k, P_1 \big) \bigg] \Delta W^i, \nonumber \\
	q_{k+1}&= q_k + \frac{1}{2} \frac{\partial H}{\partial p} \big(q_k, P_1 \big)\Delta t
	              + \frac{1}{2} \frac{\partial H}{\partial p} \big(q_{k+1}, P_1 \big)\Delta t
	              + \frac{1}{2} \sum_{i=1}^m \frac{\partial h_i}{\partial p} \big(q_k, P_1 \big)\Delta W^i
	              + \frac{1}{2} \sum_{i=1}^m \frac{\partial h_i}{\partial p} \big(q_{k+1}, P_1 \big)\Delta W^i, \nonumber \\
	p_{k+1}&= P_1 + \frac{1}{2} \bigg[ -\frac{\partial H}{\partial q} \big(q_{k+1}, P_1 \big) + F\big(q_{k+1}, P_1 \big) \bigg] \Delta t 
	           + \frac{1}{2} \sum_{i=1}^m \bigg[ -\frac{\partial h_i}{\partial q} \big(q_{k+1}, P_1 \big) + f_i\big(q_{k+1}, P_1 \big) \bigg]\Delta W^i.
	\end{align}
	
	\noindent
	This method was considered in \cite{MaDing2015} in the context of symplectic integrators for stochastic Hamiltonian systems without forcing; see also \cite{HolmTyranowskiGalerkin}. It is particularly efficient, because the first equation can be solved separately from the second one, and the last equation is an explicit update. Moreover, if the Hamiltonians are separable, the second equation becomes explicit. If in addition the forcing terms $F$ and $f_i$ have special forms, then further improvements in efficiency are possible. For instance, if the forcing terms depend linearly on $p$, as is often the case in practical applications, then the first equation is a linear equation for $P_1$, and can be solved using linear solvers. In case the forcing terms are independent of $p$ altogether, then the whole method becomes fully explicit.

\item \emph{2-stage stochastic DIRK method} \\ In order to reduce the computational cost of solving nonlinear equations, diagonally implicit Runge-Kutta (DIRK) methods use lower-triangular tableaus (see \cite{HLWGeometric}, \cite{HWODE1}, \cite{HWODE2}). One can easily verify that the most general family of 2-stage stochastic DIRK methods that satisfy the conditions \eqref{eq:Symplectic conditions for SPRK} and \eqref{eq:SPRK order conditions} has a tableau of the form
	
	\begin{equation}
		\label{eq: Stochastic DIRK method tableau}
		\begin{array}{c|cc|cc|cc|cc|cc|cc}
			& \frac{\lambda}{2} & 0	& \frac{\lambda}{2} & 0	& \frac{\lambda}{2} & 0	& \frac{\lambda}{2} & 0	& \frac{\lambda}{2} & 0	& \frac{\lambda}{2} & 0 \\
			& \lambda	& \frac{1-\lambda}{2}	& \lambda	& \frac{1-\lambda}{2}	& \lambda	& \frac{1-\lambda}{2}	& \lambda	& \frac{1-\lambda}{2}	& \lambda	& \frac{1-\lambda}{2}	& \lambda	& \frac{1-\lambda}{2}	\\
			\hline
			& \lambda & 1-\lambda	& \lambda & 1-\lambda	& \lambda & 1-\lambda	& \lambda & 1-\lambda	& \lambda & 1-\lambda	& \lambda & 1-\lambda	
		\end{array},
	\end{equation}
	
\noindent
where $\lambda \in \mathbb{R}$ is an arbitrary parameter. One can check that for $\lambda=0$ and $\lambda=1$, this method reduces to the stochastic midpoint method \eqref{eq:Stochastic midpoint method}. For other choices of $\lambda$, one needs to solve equations \eqref{eq:SPRK for stochastic forced Hamiltonian systems 1} and \eqref{eq:SPRK for stochastic forced Hamiltonian systems 2}, first for $i=1$ ($2N$ equations) in order to calculate the internal stages $Q_1$ and $P_1$ ($2N$ variables), and then for $i=2$ ($2N$ equations) to find the internal stages $Q_2$ and $P_2$ ($2N$ variables). If the Hamiltonians are separable, then equations \eqref{eq:SPRK for stochastic forced Hamiltonian systems 1} can be substituted into equations \eqref{eq:SPRK for stochastic forced Hamiltonian systems 2}, and the problem is reduced to solving two systems of $N$ equations each.
	
\end{enumerate}

\noindent
Note that the methods \eqref{eq:Stochastic midpoint method}, \eqref{eq:Stochastic Stormer-Verlet method}, and \eqref{eq: Stochastic DIRK method tableau} are in general implicit. One can use the Implicit Function Theorem to show that for sufficiently small $\Delta t$ and $|\Delta W^i|$, the relevant nonlinear equations will have a solution. However, since the increments $\Delta W^i$ are unbounded, for some values of $\Delta W^i$ solutions might not exist. To avoid problems with numerical implementations, if necessary, one can replace $\Delta W^i$ in equations \eqref{eq:Stochastic midpoint method} and \eqref{eq:Stochastic Stormer-Verlet method} with the truncated random variables $\overline{\Delta W^i}$ defined as

\begin{align}
\label{eq:Truncated Wiener increments}
\overline{\Delta W^i} =
\begin{cases}
A, & \text{if $\Delta W^i > A$},\\  
\Delta W^i, & \text{if $|\Delta W^i| \leq A$}, \\
 -A, & \text{if $\Delta W^i < -A$},
\end{cases}
\end{align}

\noindent
where $A>0$ is suitably chosen for the considered problem. See \cite{Burrage2004} and \cite{MilsteinRepin} for more details regarding schemes with truncated random increments and their convergence.

\subsection{Weak Lagrange-d'Alembert Runge-Kutta methods}
\label{sec:Weak Lagrange-d'Alembert partitioned Runge-Kutta methods}

\subsubsection{Construction}
\label{sec:Weak Construction}

A general class of weak stochastic Runge-Kutta methods for Stratonovich ordinary differential equations was introduced and analyzed in \cite{Rossler2004} and \cite{Rossler2007}. These ideas were later used by Wang \& Hong \& Xu \cite{Wang2017} to construct weak symplectic Runge-Kutta methods for stochastic Hamiltonian systems without forcing. Below we combine these ideas and introduce weak Lagrange-d'Alembert Runge-Kutta methods for stochastic forced Hamiltonian systems of the form \eqref{eq: Stochastic dissipative Hamiltonian system}.

\begin{define} An $s$-stage weak Lagrange-d'Alembert Runge-Kutta method for the system \eqref{eq: Stochastic dissipative Hamiltonian system} is given by

\begin{subequations}
\label{eq:WPRK for stochastic forced Hamiltonian systems}
\begin{align}
\label{eq:WPRK for stochastic forced Hamiltonian systems 1}
Q^{(0)}_i &= q_k + \Delta t \sum_{j=1}^s a^{(0)}_{ij} \frac{\partial H}{\partial p}(Q^{(0)}_j,P^{(0)}_j) + \sum_{r=1}^m \hat I_r \sum_{j=1}^s b^{(0)}_{ij} \frac{\partial h_r}{\partial p}(Q^{(r)}_j,P^{(r)}_j), \quad \qquad i=1,\ldots,s,  \\
\label{eq:WPRK for stochastic forced Hamiltonian systems 2}
P^{(0)}_i &= p_k + \Delta t \sum_{j=1}^s a^{(0)}_{ij} \bigg[-\frac{\partial H}{\partial q}(Q^{(0)}_j,P^{(0)}_j)+F(Q^{(0)}_j,P^{(0)}_j)\bigg]  \nonumber \\
&\phantom{= p_k}\;+ \sum_{r=1}^m \hat I_r \sum_{j=1}^s  b^{(0)}_{ij} \bigg[-\frac{\partial h_r}{\partial q}(Q^{(r)}_j,P^{(r)}_j) + f_r(Q^{(r)}_j,P^{(r)}_j) \bigg], \qquad \qquad \qquad i=1,\ldots,s, \\
\label{eq:WPRK for stochastic forced Hamiltonian systems 3}
Q^{(l)}_i &= q_k + \Delta t \sum_{j=1}^s a^{(1)}_{ij} \frac{\partial H}{\partial p}(Q^{(0)}_j,P^{(0)}_j) + \hat I_l \sum_{j=1}^s b^{(1)}_{ij} \frac{\partial h_l}{\partial p}(Q^{(l)}_j,P^{(l)}_j) \nonumber \\
&\phantom{= q_k}\;+\sum_{\substack{r=1 \\ r \not = l}}^m \hat I_r \sum_{j=1}^s b^{(3)}_{ij} \frac{\partial h_r}{\partial p}(Q^{(r)}_j,P^{(r)}_j), \qquad \qquad \qquad \qquad \qquad \;\, i=1,\ldots,s, \quad l=1,\ldots,m,  \\
\label{eq:WPRK for stochastic forced Hamiltonian systems 4}
P^{(l)}_i &= p_k + \Delta t \sum_{j=1}^s a^{(1)}_{ij} \bigg[-\frac{\partial H}{\partial q}(Q^{(0)}_j,P^{(0)}_j)+F(Q^{(0)}_j,P^{(0)}_j)\bigg]  \nonumber \\
&\phantom{= p_k}\;+ \hat I_l \sum_{j=1}^s  b^{(1)}_{ij} \bigg[-\frac{\partial h_l}{\partial q}(Q^{(l)}_j,P^{(l)}_j) + f_l(Q^{(l)}_j,P^{(l)}_j) \bigg] \nonumber \\
&\phantom{= p_k}\;+ \sum_{\substack{r=1 \\ r \not = l}}^m \hat I_r \sum_{j=1}^s  b^{(3)}_{ij} \bigg[-\frac{\partial h_r}{\partial q}(Q^{(r)}_j,P^{(r)}_j) + f_r(Q^{(r)}_j,P^{(r)}_j) \bigg], \;\,\quad i=1,\ldots,s, \quad l=1,\ldots,m, \\
\label{eq:WPRK for stochastic forced Hamiltonian systems 5}
q_{k+1} &= q_k + \Delta t \sum_{i=1}^s \alpha_i \frac{\partial H}{\partial p}(Q^{(0)}_i,P^{(0)}_i) + \sum_{r=1}^m \hat I_r \sum_{i=1}^s \beta_i \frac{\partial h_r}{\partial p}(Q^{(r)}_i,P^{(r)}_i),\\
\label{eq:WPRK for stochastic forced Hamiltonian systems 6}
p_{k+1} &= p_k + \Delta t \sum_{i=1}^s \alpha_i \bigg[ -\frac{\partial H}{\partial q}(Q^{(0)}_i,P^{(0)}_i) + F(Q^{(0)}_i,P^{(0)}_i) \bigg] \nonumber \\
&\phantom{= p_k}\;+ \sum_{r=1}^m \hat I_r \sum_{i=1}^s \beta_i \bigg[ -\frac{\partial h_r}{\partial q}(Q^{(r)}_i,P^{(r)}_i) + f_r(Q^{(r)}_i,P^{(r)}_i) \bigg],
\end{align}
\end{subequations}

\noindent
where $\Delta t$ is the time step, $\hat I_1, \ldots, \hat I_m$ are independent three-point distributed random variables with $P(\hat I_r=\pm \sqrt{3 \Delta t})=1/6$ and $P(\hat I_r=0)=2/3$ , $Q^{(0)}_i$, $Q^{(l)}_i$, $P^{(0)}_i$,  and $P^{(l)}_i$ for $i=1,\ldots,s$ and $l=1,\ldots,m$ are the position and momentum internal stages, respectively, and the coefficients of the method $a^{(0)}_{ij}$, $a^{(1)}_{ij}$, $b^{(0)}_{ij}$, $b^{(1)}_{ij}$, $b^{(3)}_{ij}$, $\alpha_i$, $\beta_i$ satisfy the conditions

\begin{subequations}
\label{eq:Symplectic conditions for WPRK}
\begin{align}
\label{eq:Symplectic conditions for WPRK 1}
\alpha_i a^{(0)}_{ij} + \alpha_j a^{(0)}_{ji} &= \alpha_i \alpha_j, \\
\label{eq:Symplectic conditions for WPRK 2}
\alpha_i b^{(0)}_{ij} + \beta_j a^{(1)}_{ji} &= \alpha_i \beta_j, \\
\label{eq:Symplectic conditions for WPRK 3}
\beta_i b^{(1)}_{ij} + \beta_j b^{(1)}_{ji} &= \beta_i \beta_j, \\
\label{eq:Symplectic conditions for WPRK 4}
\beta_i b^{(3)}_{ij} + \beta_j b^{(3)}_{ji} &= \beta_i \beta_j,
\end{align}
\end{subequations}

\noindent
for $i,j=1,\ldots,s$.
\end{define}

The Runge-Kutta method \eqref{eq:WPRK for stochastic forced Hamiltonian systems} can be represented by the tableau

\begin{equation}
\label{eq: WPRK tableau}
\begin{array}{c|c|c|c}
& a^{(0)} & b^{(0)} \\
\hline
& a^{(1)} & b^{(1)} & b^{(3)} \\
\hline
& \alpha^T & \beta^T
\end{array},
\end{equation}

\noindent
where $a^{(0)}=(a^{(0)}_{ij})_{i,j=1\ldots s}$, $\alpha = (\alpha_i)_{i=1\ldots s}$, etc. The set of equations \eqref{eq:WPRK for stochastic forced Hamiltonian systems} forms a one-step numerical scheme. Knowing $q_k$ and $p_k$ at time $t_k$, one can solve Equations \eqref{eq:WPRK for stochastic forced Hamiltonian systems 1}-\eqref{eq:WPRK for stochastic forced Hamiltonian systems 4} for the internal stages $Q^{(0)}_i$, $Q^{(l)}_i$, $P^{(0)}_i$ and $P^{(l)}_i$, and then use \eqref{eq:WPRK for stochastic forced Hamiltonian systems 5}-\eqref{eq:WPRK for stochastic forced Hamiltonian systems 6} to determine $q_{k+1}$ and $p_{k+1}$ at time $t_{k+1}$. Depending on the choice of the coefficients, the method \eqref{eq:WPRK for stochastic forced Hamiltonian systems} is in general implicit. However, since the random variables $\hat I_l$ are bounded, one can show that for sufficiently small $\Delta t$, the relevant nonlinear equations will have a solution. Below we prove that the Runge-Kutta method \eqref{eq:WPRK for stochastic forced Hamiltonian systems} with the conditions \eqref{eq:Symplectic conditions for WPRK} is indeed a stochastic Lagrange-d'Alembert method of the form \eqref{eq:Equations generating the numerical flow of the Hamiltonian system}. 

\begin{thm} 
\label{thm:Weak RK method as a Lagrange-d'Alembert variational integrator}
The $s$-stage weak Runge-Kutta method \eqref{eq:WPRK for stochastic forced Hamiltonian systems} with the conditions \eqref{eq:Symplectic conditions for WPRK} is a stochastic Lagrange-d'Alembert variational integrator of the form \eqref{eq:Equations generating the numerical flow of the Hamiltonian system} with the discrete Hamiltonian

\begin{align}
\label{eq:Discrete Hamiltonian for WPRK}
H^+_d(q_k,p_{k+1}) = p_{k+1} q_{k+1} &- \Delta t \sum_{i=1}^s \alpha_i \bigg(P^{(0)}_i \frac{\partial H}{\partial p}(Q^{(0)}_i,P^{(0)}_i)-H(Q^{(0)}_i,P^{(0)}_i) \bigg) \nonumber \\
&-  \sum_{r=1}^m \hat I_r \sum_{i=1}^s  \beta_i \bigg(P^{(r)}_i \frac{\partial h_r}{\partial p}(Q^{(r)}_i,P^{(r)}_i)-h_r(Q^{(r)}_i,P^{(r)}_i) \bigg),
\end{align}

\noindent
and the discrete forces

\begin{align}
\label{eq:Discrete forces for WPRK}
F^-_d(q_k,p_{k+1}) &= \Delta t \sum_{i=1}^s \alpha_i \bigg(\frac{\partial Q^{(0)}_i}{\partial q_k}\bigg)^T F(Q^{(0)}_i,P^{(0)}_i) + \sum_{r=1}^m \hat I_r \sum_{i=1}^s \beta_i \bigg( \frac{\partial Q^{(r)}_i}{\partial q_k} \bigg)^T f_r(Q^{(r)}_i,P^{(r)}_i), \nonumber \\
F^+_d(q_k,p_{k+1}) &= \Delta t \sum_{i=1}^s \alpha_i \bigg( \frac{\partial Q^{(0)}_i}{\partial p_{k+1}} \bigg)^T F(Q^{(0)}_i,P^{(0)}_i) + \sum_{r=1}^m \hat I_r \sum_{i=1}^s \beta_i \bigg( \frac{\partial Q^{(r)}_i}{\partial p_{k+1}} \bigg)^T f_r(Q^{(r)}_i,P^{(r)}_i),
\end{align}

\noindent
where $q_{k+1}$, $p_k$, $Q^{(0)}_i$, $Q^{(r)}_i$, $P^{(0)}_i$, and $P^{(r)}_i$, satisfy the system of equations \eqref{eq:WPRK for stochastic forced Hamiltonian systems} and are understood as functions of $q_k$ and $p_{k+1}$.
\end{thm}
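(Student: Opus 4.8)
The plan is to proceed exactly as in the proof of Theorem~\ref{thm:PRK method as a Lagrange-d'Alembert variational integrator}: I would compute the partial derivatives $D_2 H^+_d$ and $D_1 H^+_d$ of the discrete Hamiltonian \eqref{eq:Discrete Hamiltonian for WPRK} directly and verify that they equal $q_{k+1} + F^+_d$ and $p_k + F^-_d$, with $F^\pm_d$ as in \eqref{eq:Discrete forces for WPRK}, so that the scheme \eqref{eq:WPRK for stochastic forced Hamiltonian systems} coincides with the Lagrange-d'Alembert form \eqref{eq:Equations generating the numerical flow of the Hamiltonian system}. Since the algebra is lengthy, I would carry it out for one-dimensional noise $m=1$, where the cross-direction coefficient $b^{(3)}$ and condition \eqref{eq:Symplectic conditions for WPRK 4} are absent and only the drift stages $(Q^{(0)}_i,P^{(0)}_i)$ and a single family of diffusion stages $(Q^{(1)}_i,P^{(1)}_i)$ occur; the multidimensional case is analogous. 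I would begin with shorthand as in \eqref{eq:Shorthand notation}, keeping in mind the essential new feature that $H$ and $F$ are sampled at the drift stages while $h_1$ and $f_1$ are sampled at the diffusion stages.

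The first step is to differentiate the stage relations \eqref{eq:WPRK for stochastic forced Hamiltonian systems 1}--\eqref{eq:WPRK for stochastic forced Hamiltonian systems 4} with respect to $p_{k+1}$ and, separately, $q_k$, after first using the output relation \eqref{eq:WPRK for stochastic forced Hamiltonian systems 6} to express $p_k$ through $p_{k+1}$ in the momentum stages (mirroring the passage leading to \eqref{eq:The derivative of Pi with respect to p_k+1}). This yields the Jacobians $\partial Q^{(0)}_i/\partial p_{k+1}$, $\partial P^{(0)}_i/\partial p_{k+1}$, $\partial Q^{(1)}_i/\partial p_{k+1}$, $\partial P^{(1)}_i/\partial p_{k+1}$ in terms of derivatives of the shorthand quantities, the momentum ones carrying the combinations $a^{(0)}_{ij}-\alpha_j$, $b^{(0)}_{ij}-\beta_j$, $a^{(1)}_{ij}-\alpha_j$, $b^{(1)}_{ij}-\beta_j$ as weights. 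The new feature relative to the mean-square proof is that the diffusion-stage equations \eqref{eq:WPRK for stochastic forced Hamiltonian systems 3}--\eqref{eq:WPRK for stochastic forced Hamiltonian systems 4} contain the drift vector field weighted by $a^{(1)}$, so the Jacobians of the two stage families are coupled; I would record this coupling explicitly.

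The second step is to differentiate \eqref{eq:Discrete Hamiltonian for WPRK} with respect to $p_{k+1}$, insert the Jacobians from the first step, and use \eqref{eq:WPRK for stochastic forced Hamiltonian systems 6} once more to eliminate $p_{k+1}$, arriving at an expression of the same shape as \eqref{eq:Calculating the derivative of the discrete Hamiltonian - part 1}. The purely Hamiltonian quadratic contributions now carry the combinations $\alpha_i\alpha_j-\alpha_j a^{(0)}_{ji}$, the mixed combinations $\alpha_i\beta_j-\beta_j a^{(1)}_{ji}$ and $\beta_i\alpha_j-\alpha_j b^{(0)}_{ji}$, and $\beta_i\beta_j-\beta_j b^{(1)}_{ji}$, which collapse under conditions \eqref{eq:Symplectic conditions for WPRK 1}--\eqref{eq:Symplectic conditions for WPRK 3}; after re-substituting \eqref{eq:WPRK for stochastic forced Hamiltonian systems 2} and \eqref{eq:WPRK for stochastic forced Hamiltonian systems 4} for $P^{(0)}_i$ and $P^{(1)}_i$, these terms cancel, leaving only the force contributions, which reassemble into $q_{k+1}+F^+_d(q_k,p_{k+1})$. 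Because in the weak scheme the forces $F$, $f_1$ are weighted by the \emph{same} coefficients $\alpha_i$, $a^{(0)}$ (respectively $\beta_i$, $b^{(1)}$) as the Hamiltonian gradients, they ride through the cancellation automatically and require no separate order conditions. The identity $D_1 H^+_d = p_k + F^-_d$ then follows by the same computation with $q_k$ in place of $p_{k+1}$ and \eqref{eq:WPRK for stochastic forced Hamiltonian systems 5} in place of \eqref{eq:WPRK for stochastic forced Hamiltonian systems 6}.

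The main obstacle I expect is the bookkeeping forced by the coupled stage structure. Because the single mixed symplectic condition \eqref{eq:Symplectic conditions for WPRK 2} is \emph{asymmetric} in $b^{(0)}$ and $a^{(1)}$, the two cross terms mixing drift-type and diffusion-type Jacobians must be paired with exactly the right index ordering for the condition to apply; this is the delicate point, and it is the weak-method analogue of invoking \eqref{eq:Symplectic conditions for SPRK 3}--\eqref{eq:Symplectic conditions for SPRK 4} in the mean-square proof. For general $m$ the only further issue is that distinct noise directions couple through $b^{(3)}$ in \eqref{eq:WPRK for stochastic forced Hamiltonian systems 3}--\eqref{eq:WPRK for stochastic forced Hamiltonian systems 4}, and one verifies that the resulting cross-direction terms cancel by condition \eqref{eq:Symplectic conditions for WPRK 4}; this is routine once the $m=1$ pattern is established.
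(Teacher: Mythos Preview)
Your proposal is correct and follows exactly the approach the paper indicates: the paper's own proof simply states that it is analogous to the proof of Theorem~\ref{thm:PRK method as a Lagrange-d'Alembert variational integrator}, and your plan carries out precisely that analogy, correctly identifying the new bookkeeping issues (the coupled drift/diffusion stage families, the asymmetric role of condition~\eqref{eq:Symplectic conditions for WPRK 2}, and the cross-direction cancellation via~\eqref{eq:Symplectic conditions for WPRK 4} for $m>1$). There is nothing to add.
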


\begin{proof}
The proof is analogous to the proof of Theorem~\ref{thm:PRK method as a Lagrange-d'Alembert variational integrator}.
\end{proof}

\paragraph{Remark.} For stochastic Hamiltonian systems without forcing, i.e. $F\equiv 0$, $f_r\equiv 0$, the method \eqref{eq:WPRK for stochastic forced Hamiltonian systems} reduces to a weak symplectic Runge-Kutta method of the type introduced in \cite{Wang2017}. Therefore, in that case Theorem~\ref{thm:Weak RK method as a Lagrange-d'Alembert variational integrator} also provides a type-II generating function for such a family of methods, and consequently an alternative proof of their symplecticity.

\subsubsection{Convergence}
\label{sec:Weak Convergence}

Rather than precisely approximating each sample path, weak convergence concentrates on approximating the probability distribution and functionals of the exact solution (see \cite{KloedenPlatenSDE}, \cite{MilsteinBook}). Let $\bar z(t) = (\bar q(t),\bar p(t))$ be the exact solution to \eqref{eq: Stochastic dissipative Hamiltonian system} with the initial conditions $q_0$ and $p_0$, and let $z_k = (q_k,p_k)$ denote the numerical solution at time $t_k$ obtained by applying \eqref{eq:WPRK for stochastic forced Hamiltonian systems} iteratively $k$ times with the constant time step $\Delta t$. The numerical solution is said to converge weakly with weak global order $r$ if for each $\varphi \in C^{2(r+1)}_P(T^*Q,\mathbb{R})$ there exists $\delta>0$ and a constant $C>0$ such that for all $\Delta t \in (0,\delta)$ we have

\begin{equation}
\label{eq:Definition of weak convergence}
\big\| E\big( \varphi (z_K) \big) - E\big( \varphi (\bar z(T)) \big) \big\| \leq C\Delta t^r,
\end{equation}

\noindent
where $T=K \Delta t$, and $C^{\alpha}_P(T^*Q,\mathbb{R})$ denotes the space of all $\varphi \in C^{\alpha}(T^*Q,\mathbb{R})$ with polynomial growth, i.e., there exists a constant $A>0$ and $\gamma \in \mathbb{N}$ such that $|\partial^\beta_z \varphi(z)| \leq A(1+\|z\|^{2 \gamma})$ for all $z \in T^*Q$ and any partial derivative of order $\beta \leq \alpha$. Weak convergence of the Runge-Kutta methods of type \eqref{eq:WPRK for stochastic forced Hamiltonian systems} has been analyzed, and the relevant order conditions for the coefficients have been derived in \cite{Rossler2007}.

\subsubsection{Examples}
\label{sec:Weak Examples}

In \cite{Wang2017} a number of weak symplectic Runge-Kutta methods for stochastic Hamiltonian systems without forcing have been proposed. Since the symplecticity conditions derived in \cite{Wang2017} are equivalent to the conditions \eqref{eq:Symplectic conditions for WPRK}, these methods become Lagrange-d'Alembert integrators when applied to systems with forcing. In this work, we particularly focus on two methods, namely $SRKw1$ and $SRKw2$, as dubbed in \cite{Wang2017}.

\begin{enumerate}
	\item \emph{SRKw1} \\ The family of 1-stage $SRKw1$ methods is defined by the tableau
	
	\begin{equation}
		\label{eq: SRKw1 tableau}
		\begin{array}{c|c|c|c}
		& \frac{1}{2} & \lambda \\
		\hline
		& 1-\lambda & \frac{1}{2} & \frac{1}{2} \\
		\hline
		& 1 & 1
		\end{array},
	\end{equation}
	
	\noindent
	where $\lambda\in \mathbb{R}$ is an arbitrary parameter. This method is weakly convergent with order 1.0 (see \cite{Rossler2007}, \cite{Wang2017}). Since $b^{(1)}=b^{(3)}$, equations \eqref{eq:WPRK for stochastic forced Hamiltonian systems 3} and \eqref{eq:WPRK for stochastic forced Hamiltonian systems 4} imply that $Q^{(1)}_1=\ldots=Q^{(m)}_1$ and $P^{(1)}_1=\ldots=P^{(m)}_1$. Therefore, in general one has to solve the system \eqref{eq:WPRK for stochastic forced Hamiltonian systems 1}-\eqref{eq:WPRK for stochastic forced Hamiltonian systems 4} for the $4N$ variables $Q^{(0)}_1$, $P^{(0)}_1$, $Q^{(1)}_1$, and $P^{(1)}_1$. However, for several choices of the parameter $\lambda$ the computational cost can be reduced. If $\lambda=0$, then one can first solve the $2N$ equations \eqref{eq:WPRK for stochastic forced Hamiltonian systems 1}-\eqref{eq:WPRK for stochastic forced Hamiltonian systems 2} for the $2N$ variables $Q^{(0)}_1$, $P^{(0)}_1$, and then the $2N$ equations \eqref{eq:WPRK for stochastic forced Hamiltonian systems 3}-\eqref{eq:WPRK for stochastic forced Hamiltonian systems 4} for the remaining $2N$ variables $Q^{(1)}_1$, $P^{(1)}_1$. Moreover, if the Hamiltonians are separable, that is, $H(q,p)=T_0(p)+U_0(q)$ and $h_i(q,p)=T_i(p)+U_i(q)$, then equation \eqref{eq:WPRK for stochastic forced Hamiltonian systems 1} can be substituted into equation \eqref{eq:WPRK for stochastic forced Hamiltonian systems 2}, and equation \eqref{eq:WPRK for stochastic forced Hamiltonian systems 3} can be substitted into equation \eqref{eq:WPRK for stochastic forced Hamiltonian systems 4}, thus reducing the complexity to solving two systems of $N$ equations each. A similar situation occurs for $\lambda=1$. For $\lambda=\frac{1}{2}$ we further have $Q^{(0)}_1=Q^{(1)}_1=(q_k+q_{k+1})/2$ and $P^{(0)}_1=P^{(1)}_1=(p_k+p_{k+1})/2$, and the $SRKw1$ method takes the form of the stochastic midpoint method \eqref{eq:Stochastic midpoint method} with $\Delta W^i$ replaced by $\hat I_i$.
	
	\item \emph{SRKw2} \\ For systems driven by a single noise ($m=1$) we can consider methods with $b^{(3)} \equiv 0$. The family of 4-stage $SRKw2$ methods is defined by the tableau
	
	\begin{equation}
		\label{eq: SRKw2 tableau}
		\begin{array}{c|cccc|cccc}
		& \frac{1}{8} & 0           & 0           & 0           & \frac{5}{6}-\frac{\sqrt{3}}{3}  & -\frac{1}{2} & 0 & 0 \\
		& \frac{1}{4} & \frac{1}{8} & 0           & 0           & -\frac{1}{6}+\frac{\sqrt{3}}{3} & \frac{1}{2}  & 0 & 0 \\
		& \frac{1}{4} & \frac{1}{4} & \frac{1}{8} & 0           & \frac{1}{2}                     & \frac{1}{2}  & 0 & 0 \\
		& \frac{1}{4} & \frac{1}{4} & \frac{1}{4} & \frac{1}{8} & -\frac{1}{6}                    & \frac{1}{2}  & 0 & 0 \\
		\hline
		& -\frac{1}{6}+\frac{\sqrt{3}}{6} & \frac{1}{3}-\frac{\sqrt{3}}{6} & 0 & \frac{1}{3} & \frac{1}{4} & \frac{1}{4}-\frac{\sqrt{3}}{6} & 0 & 0 \\
		& \frac{1}{2} & 0 & 0 & 0 & \frac{1}{4}+\frac{\sqrt{3}}{6} & \frac{1}{4} & 0 & 0 \\
		& 0 & 0 & 0 & 0 & \lambda_1 & \lambda_2 & 0 & \lambda_3 \\
		& 0 & 0 & 0 & 0 & 0 & -\frac{1}{2} & 0 & 0 \\
		\hline
		& \frac{1}{4} & \frac{1}{4} & \frac{1}{4} & \frac{1}{4} & \frac{1}{2} & \frac{1}{2} & 0 & 0
		\end{array},
	\end{equation}
	
	\noindent
	where $\lambda_1, \lambda_2, \lambda_3 \in \mathbb{R}$ are arbitrary parameters. This method is weakly convergent with order 2.0 (see \cite{Rossler2007}, \cite{Wang2017}). Note that $\beta_3=\beta_4=0$, so the values of the internal stages $Q^{(1)}_3$, $Q^{(1)}_4$, $P^{(1)}_3$, and $P^{(1)}_4$ are not needed in \eqref{eq:WPRK for stochastic forced Hamiltonian systems 5} and \eqref{eq:WPRK for stochastic forced Hamiltonian systems 6} to calculate $q_{k+1}$ and $p_{k+1}$, respectively. Moreover, equations \eqref{eq:WPRK for stochastic forced Hamiltonian systems 3} and \eqref{eq:WPRK for stochastic forced Hamiltonian systems 4} for $i=3,4$ are explicit updates, therefore there is no need to solve for or calculate the values of these internal stages. In fact, the choice of the parameters $\lambda_1$, $\lambda_2$, and $\lambda_3$ has no effect on the values of $q_{k+1}$ and $p_{k+1}$, therefore we can set them to zero for convenience. Consequently, the system of equations \eqref{eq:WPRK for stochastic forced Hamiltonian systems 1} and \eqref{eq:WPRK for stochastic forced Hamiltonian systems 2} for $i=1, 2, 3, 4$, and equations \eqref{eq:WPRK for stochastic forced Hamiltonian systems 3} and \eqref{eq:WPRK for stochastic forced Hamiltonian systems 4} for $i=1,2$ ($12N$ equations) has to be solved for the internal stages $Q^{(0)}_1, \ldots, Q^{(0)}_4$, $P^{(0)}_1, \ldots, P^{(0)}_4$, $Q^{(1)}_1$, $Q^{(1)}_2$, $P^{(1)}_1$, and $P^{(1)}_2$ ($12N$ variables). If the Hamiltonians are separable, then equations \eqref{eq:WPRK for stochastic forced Hamiltonian systems 1} and \eqref{eq:WPRK for stochastic forced Hamiltonian systems 3} can be substituted into equations \eqref{eq:WPRK for stochastic forced Hamiltonian systems 2} and \eqref{eq:WPRK for stochastic forced Hamiltonian systems 4}, and the resulting system of $6N$ equations can be solved for $P^{(0)}_1, \ldots, P^{(0)}_4$, $P^{(1)}_1$, and $P^{(1)}_2$ ($6N$ variables).
	
\end{enumerate}

\subsection{Quasi-symplecticity}
\label{sec:Quasi-symplecticity of strong methods}

The idea of quasi-symplectic integrators has been proposed in \cite{MilsteinQuasiSymplectic} as an attempt to construct numerical methods that at least to some extent emulate the special time evolution of the symplectic and volume forms, as pointed out in Theorem~\ref{thm:Conformal symplecticity} and Theorem~\ref{thm:Phase space volume evolution}, respectively. The authors considered a special form of the stochastic forced Hamiltonian system, namely

\begin{align}
\label{eq:Special form of the stochastic form Hamiltonian system}
H(q,p) &= \frac{1}{2}p^TM^{-1}p+U(q), & F(q,p) &= -\Gamma p, \nonumber \\
h_i(q,p) &= -\sigma_i^T q, & f_i(q,p)&=0, \qquad\quad \text{for $i=1,\ldots,m$},
\end{align}

\noindent
where $M$ is an $N\times N$ constant positive definite matrix, $\Gamma$ is an $N\times N$ constant matrix, and $\sigma_i$ are constant vectors. The authors call a numerical integrator $F^+_{t_{k+1},t_k}:(q_k,p_k)\longrightarrow (q_{k+1},p_{k+1})$ quasi-symplectic if it satisfies the following two conditions when applied to the system \eqref{eq:Special form of the stochastic form Hamiltonian system}:

\begin{itemize}
	\item[(QS1)] it degenerates to a symplectic method when the forcing term vanishes, i.e., $\Gamma = 0$, 
	\item[(QS2)] the Jacobian
	
	\begin{equation}
	J \equiv \det DF^+_{t_{k+1},t_k}=\frac{D(q_{k+1},p_{k+1})}{D(q_k,p_k)}
	\end{equation}
	
	\noindent
	does not depend on $q_k$ and $p_k$.
\end{itemize}

\noindent
The condition (QS2) is natural, since the exact Jacobian \eqref{eq:Parameter b_t} does not depend on the phase space variables. Several quasi-symplectic numerical methods have been proposed and tested in \cite{MilsteinQuasiSymplectic}; see also \cite{MilsteinErgodic}. Below we demonstrate that the idea of quasi-symplecticity can be extended to more general systems than \eqref{eq:Special form of the stochastic form Hamiltonian system}.

The methods presented in Section~\ref{sec:Examples of strong methods} and Section~\ref{sec:Weak Examples} preserve the underlying variational structure of the general system \eqref{eq: Stochastic dissipative Hamiltonian system}, as has been shown in Theorem~\ref{thm:Discrete stochastic Lagrange-d'Alembert Principle in Phase Space}. These methods also naturally reduce to symplectic methods, when the forcing terms $F$ and $f_i$ vanish (see \cite{HolmTyranowskiGalerkin}, \cite{MaDing2012}, \cite{MaDing2015}, \cite{MilsteinRepin}, \cite{Wang2017}). Below we show that the St{\"o}rmer-Verlet method satisfies the condition (QS2) for a much broader class of systems than \eqref{eq:Special form of the stochastic form Hamiltonian system}.

\begin{thm}
\label{thm:Quasi-symplecticity of Stormer-Verlet}
Suppose that $H(q,p)$, $F(q,p)$, and $h_i(q,p)$, $f_i(q,p)$ for $i=1,\ldots,m$ satisfy conditions (H1)-(H3). If the Hamiltonians are separable, that is,

\begin{equation}
\label{eq:Separable Hamiltonians for the quasi-symplecticity of Stormer-Verlet}
H(q,p) = T_0(p)+U_0(q), \qquad\qquad h_i(q,p) = T_i(p)+U_i(q), \qquad\qquad i=1,\ldots, m,
\end{equation}

\noindent
and the forcing terms have the form

\begin{equation}
\label{eq:Linear forcing terms for the quasi-symplecticity of Stormer-Verlet}
F(q,p) = -\Gamma_0 p, \qquad\qquad f_i(q,p) = -\Gamma_i p, \qquad\qquad i=1,\ldots, m,
\end{equation}

\noindent
for constant $N \times N$ matrices $\Gamma_i$, then the Jacobian $J$ of the discrete flow $F^+_{t_{k+1},t_k}:(q_k,p_k)\longrightarrow (q_{k+1},p_{k+1})$ defined by the St{\"o}rmer-Verlet method \eqref{eq:Stochastic Stormer-Verlet method} does not depend on $q_k$ and $p_k$, and is almost surely equal to

\begin{equation}
J=\det\bigg(I+\gamma \Big(I-\frac{1}{2}\gamma\Big)^{-1}\bigg),
\end{equation}

\noindent
where $I$ is the $N \times N$ identity matrix, $\gamma = \Delta t \Gamma_0 + \sum_{i=1}^m \Delta W^i \Gamma_i$, and we assume that the matrix $I-\frac{1}{2}\gamma$ is almost surely invertible.
\end{thm}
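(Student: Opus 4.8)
The plan is to use the separable structure to collapse the implicit Störmer--Verlet scheme \eqref{eq:Stochastic Stormer-Verlet method} into an explicit composition of three elementary maps, and then read off the Jacobian as a product of their (block-triangular) Jacobians. First I would substitute the separable Hamiltonians \eqref{eq:Separable Hamiltonians for the quasi-symplecticity of Stormer-Verlet} and the linear forcing \eqref{eq:Linear forcing terms for the quasi-symplecticity of Stormer-Verlet} into \eqref{eq:Stochastic Stormer-Verlet method}. Since $\partial H/\partial q$, $\partial h_i/\partial q$ then depend only on $q$ and $\partial H/\partial p$, $\partial h_i/\partial p$ only on $p$, the middle line loses its implicitness and becomes the explicit update $q_{k+1}=q_k+T_0'(P_1)\,\Delta t+\sum_i T_i'(P_1)\,\Delta W^i$, a function of $(q_k,P_1)$ alone. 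Collecting the $P_1$-dependence, which now enters only through the linear forcing, the first line becomes the linear solve $\big(I+\tfrac12\gamma\big)P_1=p_k-\tfrac12\big(U_0'(q_k)\Delta t+\sum_i U_i'(q_k)\Delta W^i\big)$, and the third line becomes $p_{k+1}=\big(I-\tfrac12\gamma\big)P_1-\tfrac12\big(U_0'(q_{k+1})\Delta t+\sum_i U_i'(q_{k+1})\Delta W^i\big)$, with $\gamma=\Delta t\,\Gamma_0+\sum_i\Delta W^i\Gamma_i$ as in the statement; the assumed invertibility of $I-\tfrac12\gamma$ guarantees these relations genuinely define the flow.

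Next I would factor the one-step map $F^+_{t_{k+1},t_k}$ as $\Phi_3\circ\Phi_2\circ\Phi_1$, where $\Phi_1:(q_k,p_k)\mapsto(q_k,P_1)$ is the momentum solve of the first line, $\Phi_2:(q_k,P_1)\mapsto(q_{k+1},P_1)$ is the position update, and $\Phi_3:(q_{k+1},P_1)\mapsto(q_{k+1},p_{k+1})$ is the third line. Each Jacobian is block-triangular: $D\Phi_1$ is lower-triangular with diagonal blocks $I$ and $\big(I+\tfrac12\gamma\big)^{-1}$; $D\Phi_2$ is upper-triangular with both diagonal blocks equal to $I$; and $D\Phi_3$ is lower-triangular with diagonal blocks $I$ and $\big(I-\tfrac12\gamma\big)$.

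The decisive observation, which is the content of condition (QS2), is that every term carrying $(q_k,p_k)$-dependence, namely the position-Hessians $U_0''\Delta t+\sum_i U_i''\Delta W^i$ evaluated at $q_k$ or $q_{k+1}$ and the momentum-Hessians $T_0''\Delta t+\sum_i T_i''\Delta W^i$ evaluated at $P_1$, sits in the \emph{off-diagonal} blocks of these triangular Jacobians and hence never enters the determinants. Consequently $J=\det D\Phi_1\cdot\det D\Phi_2\cdot\det D\Phi_3$ reduces to a quotient of $\det\big(I\pm\tfrac12\gamma\big)$ that depends only on $\Delta t$, the increments $\Delta W^i$, and the constant matrices $\Gamma_i$, and not on $q_k,p_k$. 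The stated closed form is then obtained from the algebraic identity $I+\gamma\big(I-\tfrac12\gamma\big)^{-1}=\big(I+\tfrac12\gamma\big)\big(I-\tfrac12\gamma\big)^{-1}$, which gives $\det\!\big(I+\gamma(I-\tfrac12\gamma)^{-1}\big)=\det\big(I+\tfrac12\gamma\big)\big/\det\big(I-\tfrac12\gamma\big)$.

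The main obstacle I anticipate is the sign and orientation bookkeeping in reducing the implicit scheme to the three maps: one must verify that $P_1$ enters the first and third lines through the forcing with opposite effective signs, so that the momentum solve contributes $\det\big(I+\tfrac12\gamma\big)^{-1}$ while the explicit update contributes $\det\big(I-\tfrac12\gamma\big)$, and one must check that separability really does make $\Phi_2$ a pure shear. Getting the orientation of the resulting quotient correct, equivalently tracking the overall sign of $\gamma$, is precisely the delicate point and should be checked against the volume law \eqref{eq:Volume form evolution} of Theorem~\ref{thm:Phase space volume evolution} in the small-step limit. Once the block-triangular structure is in place, the determinant computation and the final simplification are routine, and the almost-sure statement is immediate since $\gamma$ is a.s. well defined and $I-\tfrac12\gamma$ is a.s. invertible by assumption.
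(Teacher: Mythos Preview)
Your approach is correct and genuinely different from the paper's. The paper solves the first line for $P_1$, substitutes into the other two to write $(q_{k+1},p_{k+1})$ explicitly as functions of $(q_k,p_k)$, assembles the full $2N\times 2N$ Jacobian \eqref{eq:Jacobian for Stormer-Verlet 1}, and then reduces it to block upper-triangular form by elementary row and column operations \eqref{eq:Jacobian for Stormer-Verlet 2}. Your factorization $\Phi_3\circ\Phi_2\circ\Phi_1$ exploits the kick--drift--kick structure directly: each factor has a block-triangular Jacobian by inspection, the phase-space-dependent Hessians sit only in the off-diagonal blocks, and the $(q_k,p_k)$-independence of $J$ is immediate. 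Your argument is more conceptual and generalizes readily to other splitting schemes; the paper's is more mechanical but fully self-contained in a single determinant calculation.

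Your caution about signs is well placed, and your proposed sanity check against Theorem~\ref{thm:Phase space volume evolution} actually resolves it. Your factorization gives $J=\det\big(I-\tfrac12\gamma\big)\big/\det\big(I+\tfrac12\gamma\big)\approx 1-\tr\gamma$, consistent with the continuous volume contraction $b_{t,t_0}\approx 1-\tr\gamma$ for damping $F=-\Gamma_0 p$. The formula in the theorem statement simplifies to the reciprocal $\det\big(I+\tfrac12\gamma\big)\big/\det\big(I-\tfrac12\gamma\big)$, and the discrepancy traces back to \eqref{eq:P1 for quasi-symplecticity}: substituting $F(q_k,P_1)=-\Gamma_0 P_1$ into the first line of \eqref{eq:Stochastic Stormer-Verlet method} yields $(I+\tfrac12\gamma)P_1=p_k-\ldots$, as you wrote, not $(I-\tfrac12\gamma)P_1$. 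The essential content---that $J$ depends only on $\gamma$---is unaffected, and your argument establishes it cleanly; just do not expect your quotient to match the displayed formula as written.
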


\begin{proof}
With the separable Hamiltonians \eqref{eq:Separable Hamiltonians for the quasi-symplecticity of Stormer-Verlet} and the linear forcing terms \eqref{eq:Linear forcing terms for the quasi-symplecticity of Stormer-Verlet}, the first equation in \eqref{eq:Stochastic Stormer-Verlet method} is linear, and $P_1$ can be expressed as

\begin{equation}
\label{eq:P1 for quasi-symplecticity}
P_1 = \Big(I-\frac{1}{2}\gamma\Big)^{-1} \bigg(p_k - \frac{1}{2}\Delta t \frac{\partial U_0}{\partial q}(q_k) - \frac{1}{2} \sum_{i=1}^m\Delta W^i \frac{\partial U_i}{\partial q}(q_k) \bigg).
\end{equation}

\noindent
We then plug in $P_1$ into the second and third equations in \eqref{eq:Stochastic Stormer-Verlet method} to obtain expressions for $q_{k+1}$ and $p_{k+1}$ as functions of $q_k$ and $p_k$. Let us introduce the notation

\begin{align}
\eta &= I - \frac{1}{2}\gamma, & A &= \Delta t \frac{\partial^2 T_0}{\partial p^2}(P_1)+\sum_{i=1}^m \Delta W^i \frac{\partial^2 T_i}{\partial p^2}(P_1), \nonumber \\
B &= \Delta t \frac{\partial^2 U_0}{\partial q^2}(q_k)+\sum_{i=1}^m \Delta W^i \frac{\partial^2 U_i}{\partial q^2}(q_k), & C&= \Delta t \frac{\partial^2 U_0}{\partial q^2}(q_{k+1})+\sum_{i=1}^m \Delta W^i \frac{\partial^2 U_i}{\partial q^2}(q_{k+1}).
\end{align}

\noindent
Using this notation, the Jacobian $J$ of the mapping $(q_k,p_k)\longrightarrow (q_{k+1},p_{k+1})$ can be expressed as

\begin{equation}
\label{eq:Jacobian for Stormer-Verlet 1}
J = \begin{vmatrix} 
\frac{\partial q_{k+1}}{\partial q_k} & \frac{\partial q_{k+1}}{\partial p_k} \\
\frac{\partial p_{k+1}}{\partial q_k} & \frac{\partial p_{k+1}}{\partial p_k} 
\end{vmatrix}
=\begin{vmatrix} 
I - \frac{1}{2}A \eta^{-1} B &  A \eta^{-1} \\
-\frac{1}{2}(I+\gamma \eta^{-1})B-\frac{1}{2}C+\frac{1}{4} CA\eta^{-1}B & I-\frac{1}{2}CA\eta^{-1}+\gamma \eta^{-1}
\end{vmatrix}.
\end{equation}

\noindent
Let us transform this determinant into a block upper triangular form by performing basic linear manipulations on its columns and rows. First, multiply the upper and lower right blocks by $\frac{1}{2}B$ on the right, and add the results to the upper and lower left blocks, respectively. Then, multiply the upper left and right blocks by $\frac{1}{2}C$ on the left, and add the results to the lower left and right blocks, thus obtaining a block upper triangular form. Writing out these steps explicitly, we have

\begin{equation}
\label{eq:Jacobian for Stormer-Verlet 2}
J =\begin{vmatrix} 
I  &  A \eta^{-1} \\
-\frac{1}{2}C & I-\frac{1}{2}CA\eta^{-1}+\gamma \eta^{-1}
\end{vmatrix}
=
\begin{vmatrix} 
I  &  A \eta^{-1} \\
0 & I+\gamma \eta^{-1}
\end{vmatrix}
=\det(I+\gamma \eta^{-1}),
\end{equation}

\noindent
which completes the proof.
\end{proof}

\paragraph{Remark.} In case the matrix $\eta = I - \frac{1}{2}\gamma$ is not almost surely invertible, one can replace $\Delta W^i$ with the suitably chosen truncated increments \eqref{eq:Truncated Wiener increments}.

\section{Numerical experiments}
\label{sec:Numerical experiments}

In this section we present the results of our numerical experiments. We have tested the performance of the stochastic Lagrange-d'Alembert integrators presented in Section~\ref{sec:Stochastic Lagrange-d'Alembert variational integrators}, namely the midpoint method \eqref{eq:Stochastic midpoint method}, the St\"{o}rmer-Verlet method \eqref{eq:Stochastic Stormer-Verlet method}, the DIRK method \eqref{eq: Stochastic DIRK method tableau} with $\lambda=1/2$, the $SRKw1$ method \eqref{eq: SRKw1 tableau} with $\lambda=0$, and the $SRKw2$ method \eqref{eq: SRKw2 tableau}, and compared it to the performance of some popular general purpose non-geometric explicit stochastic integrators, namely the mean-square Heun method (\cite{BurragePhDThesis}, \cite{KloedenPlatenSDE}), the mean-square $R2$ and $E1$ methods (see \cite{Burrage1996}, \cite{Burrage1998}, \cite{Burrage2000}, \cite{BurragePhDThesis}), and the weak $RS1$ and $RS2$ methods (\cite{Rossler2007}). The Lagrange-d'Alembert integrators have demonstrated superior behavior in long-time simulations in all of the examples described below. In the case of the midpoint, St\"{o}rmer-Verlet, and DIRK methods, we used unbounded increments $\Delta W^i$, but observed no numerical issues. In principle, one should use truncated increments of the form \eqref{eq:Truncated Wiener increments}, but for the chosen parameters in the examples below, the probability of encountering a singularity was negligible. All computations have been performed in the Julia programming language with the help of the \emph{GeometricIntegrators.jl} library (see \cite{KrausGeometricIntegrators}).

\subsection{Long-time energy behavior}
\label{sec:Long-time energy behavior}

The Kubo oscillator is a stochastic Hamiltonian system with the Hamiltonians given by $H(q,p)=p^2/2+q^2/2$ and $h(q,p)=\beta(p^2/2+q^2/2)$, where $\beta$ is the noise intensity (see \cite{MilsteinRepin}). It is an example of an oscillator with a fluctuating frequency and it was first introduced in the context of the line-shape theory (see \cite{Anderson1954}, \cite{Kubo1954}), but later also found many other applications in connection with mechanical systems, turbulence, laser theory, wave propagation (see \cite{VanKampen1976} and the references therein), magnetic resonance spectroscopy, nonlinear spectroscopy (see \cite{MukamelBook1995} and the references therein), single molecule spectroscopy (\cite{Jung2003}), and stochastic resonance (\cite{ChaudhuriMicroscopic2009}, \cite{Chaudhuri2010}, \cite{ChaudhuriNonequilibrium2009}, \cite{Gitterman2004}). The Kubo oscillator serves as a prototype for multiplicative stochastic processes, and since its solutions can be calculated analytically, it is often used for validation of numerical algorithms (see, e.g., \cite{Fox1987}, \cite{MaDing2015}, \cite{MilsteinRepin}, \cite{SunWang2016}). Here we consider the damped Kubo oscillator with the forcing terms given by $F(q,p)=-\nu p$ and $f(q,p)=-\beta \nu p$, where $\nu$ is the damping coefficient. It is straightforward to verify that the exact solution is given by  

\begin{align}
\label{eq:Damped Kubo oscillator---exact solution}
\bar q(t)&= q_0 e^{-\frac{\nu}{2}(t+\beta W(t))} \cos \omega (t+\beta W(t)) + \frac{1}{\omega}(p_0+\frac{\nu}{2} q_0) e^{-\frac{\nu}{2}(t+\beta W(t))} \sin \omega (t+\beta W(t)), \nonumber \\
\bar p(t)&= p_0 e^{-\frac{\nu}{2}(t+\beta W(t))} \cos \omega (t+\beta W(t)) - \frac{1}{\omega}(q_0+\frac{\nu}{2} p_0) e^{-\frac{\nu}{2}(t+\beta W(t))} \sin \omega (t+\beta W(t)),
\end{align}

\noindent
where $q_0$ and $p_0$ are the initial conditions, the angular frequency is $\omega=\frac{1}{2}\sqrt{4-\nu^2}$, and we have assumed the underdamped case $0\leq \nu < 2$. Note that \eqref{eq:Damped Kubo oscillator---exact solution} is the solution of the deterministic damped harmonic oscillator with the time argument shifted by $\beta W(t)$. Given that $W(t)\sim N(0,t)$ is normally distributed, one can explicitly calculate the expected value of the Hamiltonian $H$ as a function of time as

\begin{align}
\label{eq:The expected value of the Hamiltonian for the Kubo oscillator}
E\Big(H\big(\bar q(t),\bar p(t)\big)\Big)=a e^{-\frac{\nu (2-\beta^2 \nu)}{2}t} + e^{-((2-\nu^2)\beta^2+\nu)t}\Big[ b \cos\big(2 (1-\beta^2 \nu)\omega t \big) + c \sin\big(2 (1-\beta^2 \nu)\omega t \big) \Big],
\end{align}

\noindent
where

\begin{align}
\label{eq:Coefficients in the formula for E(H)}
a=\frac{2 (p_0^2 + q_0^2 + \nu p_0 q_0)}{4 - \nu^2}, \qquad\qquad b=-\frac{\nu^2 (p_0^2 + q_0^2)+4 \nu p_0 q_0}{2 (4 - \nu^2)}, \qquad\qquad c=\frac{\nu (q_0^2-p_0^2)}{2 \sqrt{4 - \nu^2}}.
\end{align}

\noindent
Simulations with the initial conditions $q_0=2$, $p_0=0$, and the parameters $\beta=0.5$ and $\nu=0.001$ were carried out until the time $T=5000$ (approximately 800 periods of the oscillator in the absence of noise). In each case 50000 sample paths were generated. The numerical value of the mean Hamiltonian $E(H)$ as a function of time is depicted in Figure~\ref{fig: Hamiltonian for Kubo for strong integrators} and Figure~\ref{fig: Hamiltonian for Kubo for weak integrators} for the mean-square and weak integrators, respectively. We see that the Lagrange-d'Alembert integrators capture the exponential decay of $E(H)$ very accurately even when relatively large time steps $\Delta t$ are used. The explicit Heun and $R2$ methods fail to reproduce that behavior even for the significantly smaller time step. While the explicit $E1$, $RS1$, and $RS2$ methods capture the qualitative decay of $E(H)$, still much smaller time steps would be needed to reach the level of accuracy of the Lagrange-d'Alembert integrators, thus rendering them inefficient. The accuracy of the Monte Carlo approximation of $E(H)$ at each time step was controlled by estimating the relative error $\sigma(E(H))/E(H)$, where $\sigma(E(H))$ denotes the standard deviation of the mean. The maximum relative error for the St\"{o}rmer-Verlet method was $2.87\cdot 10^{-3}$, and for all other methods it did not exceed $5.26\cdot 10^{-4}$.

\begin{figure}
	\centering
		\includegraphics[width=\textwidth]{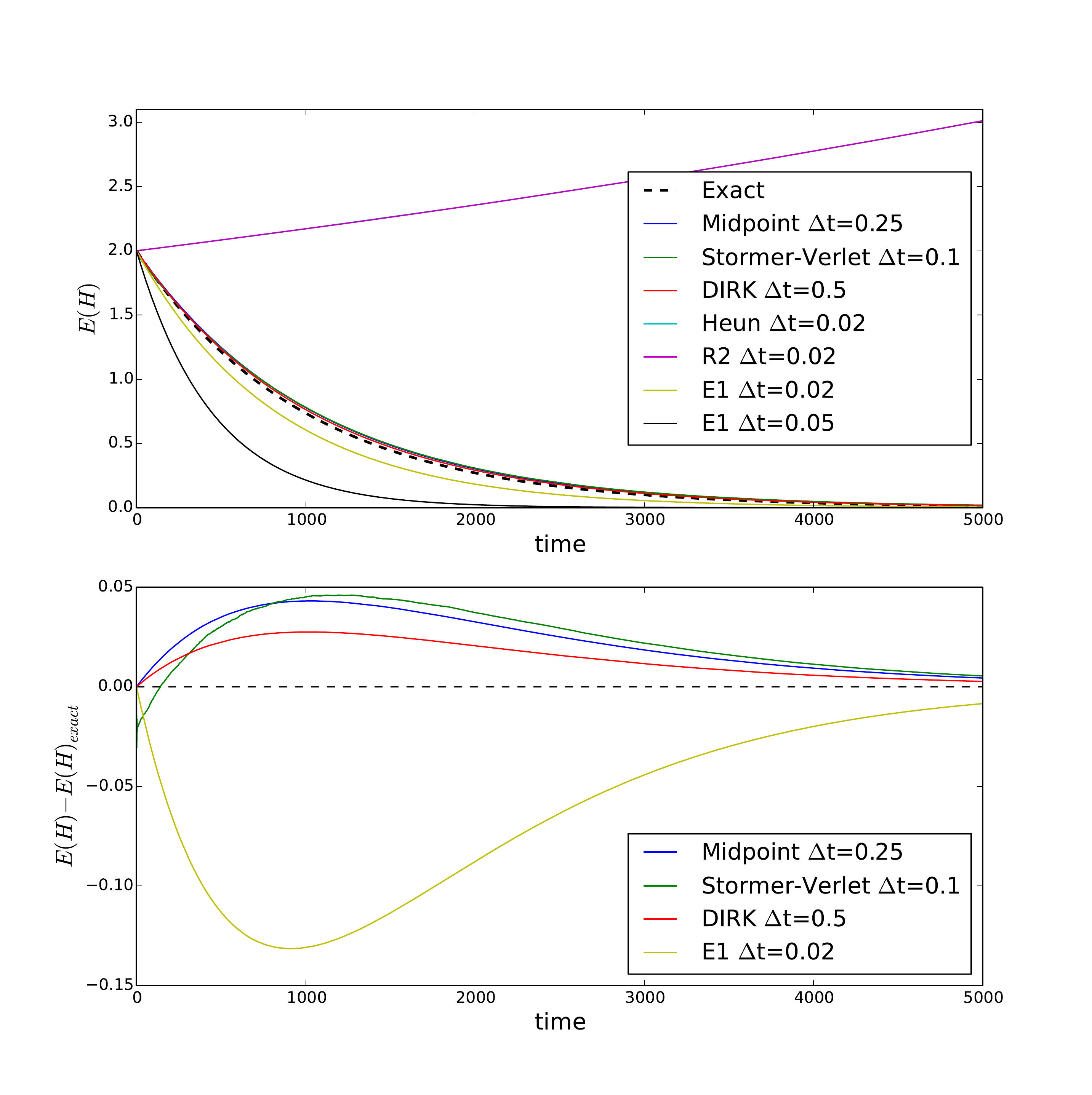}
		\caption{ \emph{Top:} The numerical value of the mean Hamiltonian $E(H)$ for the simulations of the damped Kubo oscillator with the initial conditions $q_0=2$, $p_0=0$, and the parameters $\beta=0.5$ and $\nu=0.001$ is shown for the solutions computed with the mean-square explicit Heun, $R2$, and $E1$ methods, and the mean-square Lagrange-d'Alembert methods presented in Section~\ref{sec:Examples of strong methods}. The Lagrange-d'Alembert integrators accurately capture the exponential decay of $E(H)$, whereas the explicit methods either fail to reproduce that behavior or do so inaccurately. Note that the plots for the Heun and $R2$ methods, as well as for the midpoint and St\"{o}rmer-Verlet methods, overlap very closely. \emph{Bottom:} The difference between the numerical value of the mean Hamiltonian $E(H)$ and the exact value \eqref{eq:The expected value of the Hamiltonian for the Kubo oscillator} is shown for the $E1$ method and the Lagrange-d'Alembert integrators. The stochastic DIRK method proves to be particularly accurate even when the time step $\Delta t=0.5$ is used.}
		\label{fig: Hamiltonian for Kubo for strong integrators}
\end{figure}

\begin{figure}
	\centering
		\includegraphics[width=\textwidth]{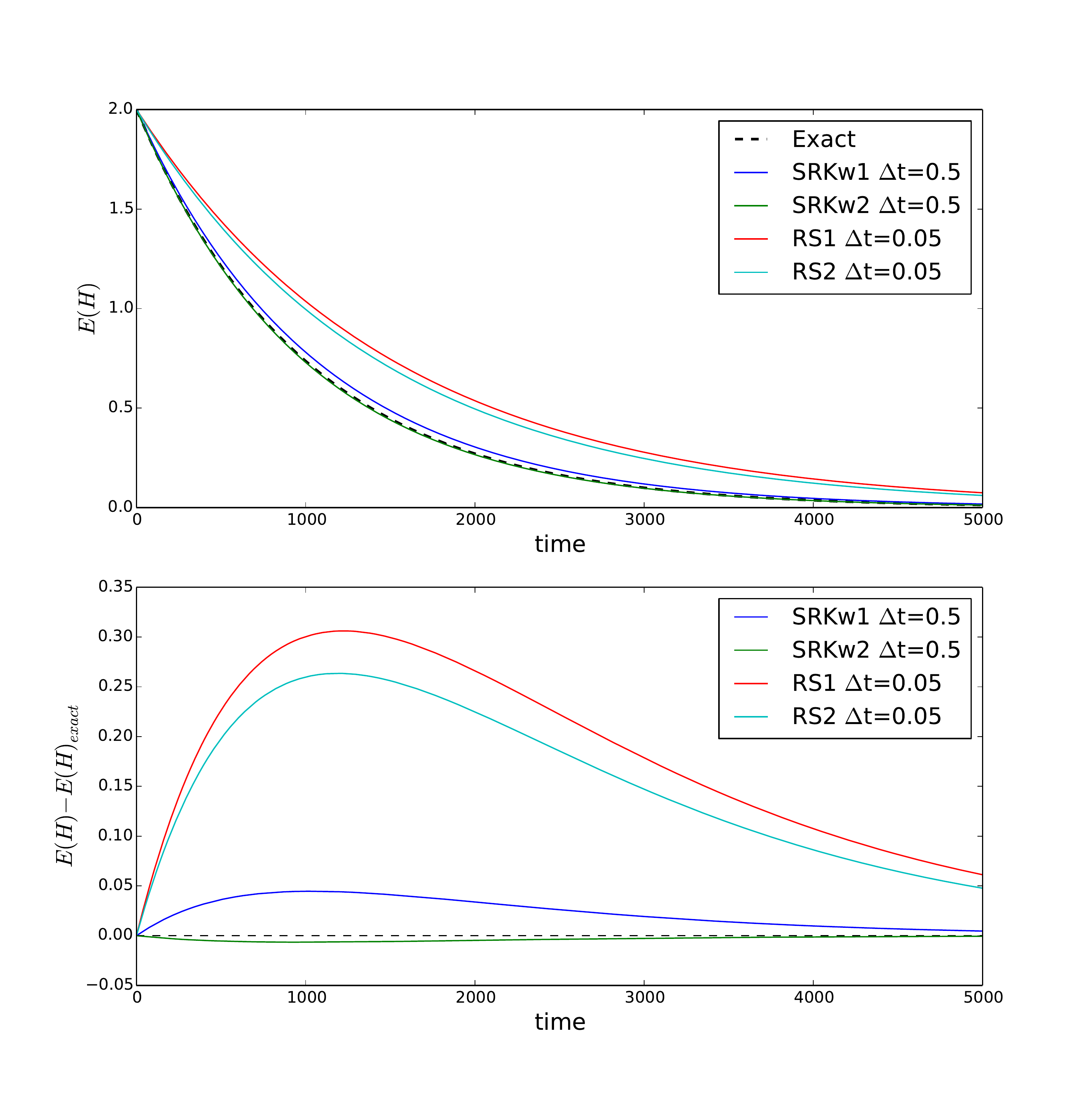}
		\caption{ \emph{Top:} The numerical value of the mean Hamiltonian $E(H)$ for the simulations of the damped Kubo oscillator with the initial conditions $q_0=2$, $p_0=0$, and the parameters $\beta=0.5$ and $\nu=0.001$ is shown for the solutions computed with the weak explicit $RS1$ and $RS2$ methods, and the weak Lagrange-d'Alembert methods presented in Section~\ref{sec:Weak Examples}. The Lagrange-d'Alembert integrators capture the exponential decay of $E(H)$ much more accurately then the explicit ones, even when much larger time steps are used. \emph{Bottom:} The difference between the numerical value of the mean Hamiltonian $E(H)$ and the exact value \eqref{eq:The expected value of the Hamiltonian for the Kubo oscillator} is shown instead. The $SRKw2$ method proves to be significantly more accurate then the others, even when the time step $\Delta t=0.5$ is used.}
		\label{fig: Hamiltonian for Kubo for weak integrators}
\end{figure}

\subsection{Ergodic limits}
\label{sec:Ergodic limits}

In many cases of practical interest the system \eqref{eq: Stochastic dissipative Hamiltonian system} is \emph{ergodic}, which means that

\begin{itemize}
	\item[(1)] it possesses a unique invariant measure represented by the probability density function $\rho_\infty(\xi, \zeta)$ with $(\xi,\zeta) \in T^*Q$, i.e. a stationary solution of the corresponding Fokker-Planck equation (see~\cite{GardinerStochastic})
	\item[(2)] for any function $\varphi: T^*Q \longrightarrow \mathbb{R}$ with polynomial growth at infinity, its ergodic limit, i.e. the expected value with respect to the invariant measure, can be calculated as the limit
	
	\begin{equation}
	\label{eq:Ergodict limit}
	\varphi^{\text{erg}}\equiv \int\int \varphi(\xi,\zeta) \rho_\infty(\xi,\zeta)\,d\xi d\zeta = \lim_{t\rightarrow +\infty} E\Big(\varphi\big(\bar q(t),\bar p(t)\big)\Big),
	\end{equation}
	
	\noindent
	where $(\bar q(t), \bar p(t))$ is an arbitrary solution of \eqref{eq: Stochastic dissipative Hamiltonian system} with arbitrary initial conditions.
\end{itemize}

\noindent
For more information about ergodic systems and ergodic numerical schemes see, e.g., \cite{BouRabeeOwhadiBoltzmannGibbs}, \cite{BouRabeeOwhadi2010}, \cite{Hong2017}, \cite{Mattingly2002}, \cite{Mattingly2010}, \cite{MilsteinErgodic}, \cite{Talay2002}. For many applications, it is interesting to compute the mean of a given function with respect to the invariant law of the diffusion, but the explicit form of the invariant measure is often not known. If the considered system is ergodic, then the ergodic limit can be approximated as 

\begin{equation}
	\label{eq:Ergodict limit approximation}
	\varphi^{\text{erg}} \approx E\Big(\varphi\big(\bar q(T),\bar p(T)\big)\Big)
\end{equation}

\noindent 
by choosing a sufficiently large time $T$. One can then use numerical integrators to approximate $\bar q(T)$ and $\bar p(T)$. However, formula \eqref{eq:Ergodict limit approximation} requires integration of the system over comparatively long time intervals, which poses a significant computational difficulty. Below we compare the performance of the geometric integrators introduced in Section~\ref{sec:Stochastic Lagrange-d'Alembert variational integrators} with the performance of explicit schemes. Note that we do not make any claims about the ergodicity of the used schemes and defer this issue to future work.

In recent years the analysis of nonlinear oscillators subjected to random excitations has been of significant interest, for instance in the context of stochastic resonance and stochastic bifurcation theory. The van der Pol oscillator is one of the most extensively studied systems in nonlinear dynamics and has a long history of being used in physical and biological sciences (see, e.g., \cite{Guckenheimer2002}). It possesses a trivial fixed point and a limit cycle attractor. Various stochastic extensions of the van der Pol oscillator have been considered to test the effect of external noises on its self-sustaining mechanism, the period and lifetime of its oscillations, and the attraction basins of its fixed point and limit cycle (see, e.g., \cite{DoiInoue1998}, \cite{Huang2013}, \cite{Leung1995}, \cite{LiZhu2018}, \cite{LiWu2019}, \cite{SchenkHoppe1996}, \cite{Spigler1985}, \cite{Tel1988}). A numerical study of such stochastic extensions requires long integration times and serves as an interesting testbed for numerical algorithms. Consider van der Pol's equation with additive noise (see \cite{MilsteinErgodic}), which is a stochastic forced Hamiltonian system of the form \eqref{eq: Stochastic dissipative Hamiltonian system} with

\begin{align}
\label{eq:Hamiltonian and forces for Van der Pol's equation}
H(q,p) &= \frac{1}{2}p^2 + \frac{1}{2}q^2, & F(q,p) &= \nu(1-q^2)p, \nonumber \\
h(q,p) &= -\sigma q, & f(q,p)&=0,
\end{align}

\noindent
where $\nu \geq 0$ and $\sigma \geq 0$ are parameters. The explicit form of the invariant measure for this system is unknown, however, it is interesting to compute the ergodic value of the energy. Note that the forcing term $F(q,p)$ is not globally Lipschitz, therefore this example also tests the Lagrange-d'Alembert integrators in the situation when the assumption (H3) from Section~\ref{sec:Stochastic Lagrange-d'Alembert principle} is not satisfied. Simulations with the initial conditions $q_0=1$, $p_0=1$, and the parameters $\sigma=0.05$ and $\nu=0.001$ were carried out until the time $T=5000$. In each case $10^6$ sample paths were generated. The numerical value of the mean Hamiltonian $E(H)$ as a function of time is depicted in Figure~\ref{fig: Hamiltonian for Van der Pol for DIRK} for the DIRK, Heun, and $E1$ methods. As the reference value we take $H^{\text{erg}}=2.3165$, which was calculated in \cite{MilsteinErgodic} using a second-order weak quasi-symplectic method at the time $T_{\text{ref}}=10000$ with the time step $\Delta t=0.05$ and $4 \times 10^6$ sample paths. We see that the DIRK method accurately reproduces the reference value even with the relatively large time step $\Delta t = 0.2$, while the Heun and $E1$ methods require the much smaller time step $\Delta t = 0.02$ to reach that level of accuracy. The situation is similar for the other Lagrange-d'Alembert and explicit integrators. Figure~\ref{fig: Hamiltonian for Van der Pol - close up for all} depicts the behavior of $E(H)$ near the reference value on the time interval $[4000,5000]$ for each of the tested integrators. The maximum relative Monte Carlo error $\sigma(E(H))/E(H)$ did not exceed $7.17\cdot 10^{-4}$ in any of the simulations.

\begin{figure}
	\centering
		\includegraphics[width=\textwidth]{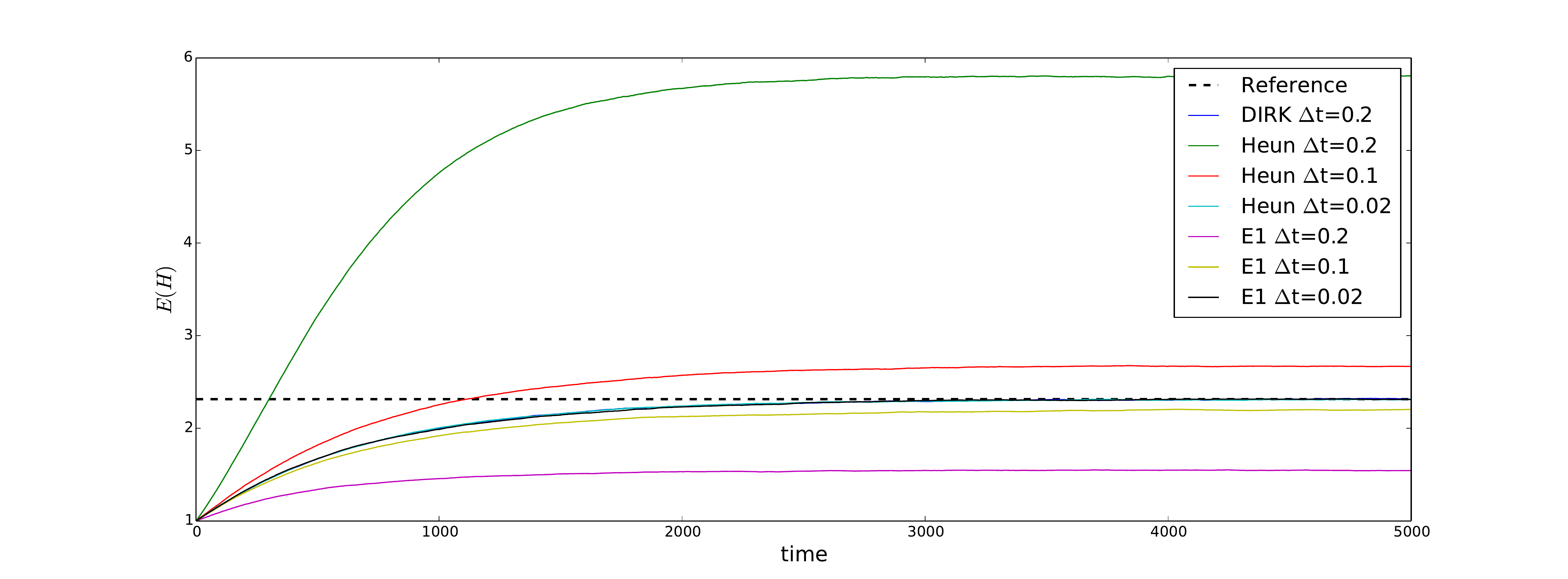}
		\caption{The numerical value of the mean Hamiltonian $E(H)$ as a function of time for the simulations of Van der Pol's equation with the initial conditions $q_0=1$, $p_0=1$, and the parameters $\sigma=0.05$ and $\nu=0.001$, is shown for the solutions computed with the DIRK, Heun, and $E1$ methods. The reference value $H^{\text{erg}}=2.3165$ was calculated in \cite{MilsteinErgodic}. The DIRK method accurately reproduces the reference value even with the relatively large time step $\Delta t = 0.2$, while the Heun and $E1$ methods require the much smaller time step $\Delta t = 0.02$ to reach that level of accuracy. For the clarity of the plot the other Lagrange-d'Alembert and explicit methods are not depicted, but they demonstrate similar behavior. Note that the plots for the DIRK method, and the Heun and $E1$ methods with $\Delta t=0.02$ overlap very closely.}
		\label{fig: Hamiltonian for Van der Pol for DIRK}
\end{figure}

\begin{figure}
	\centering
		\includegraphics[width=\textwidth]{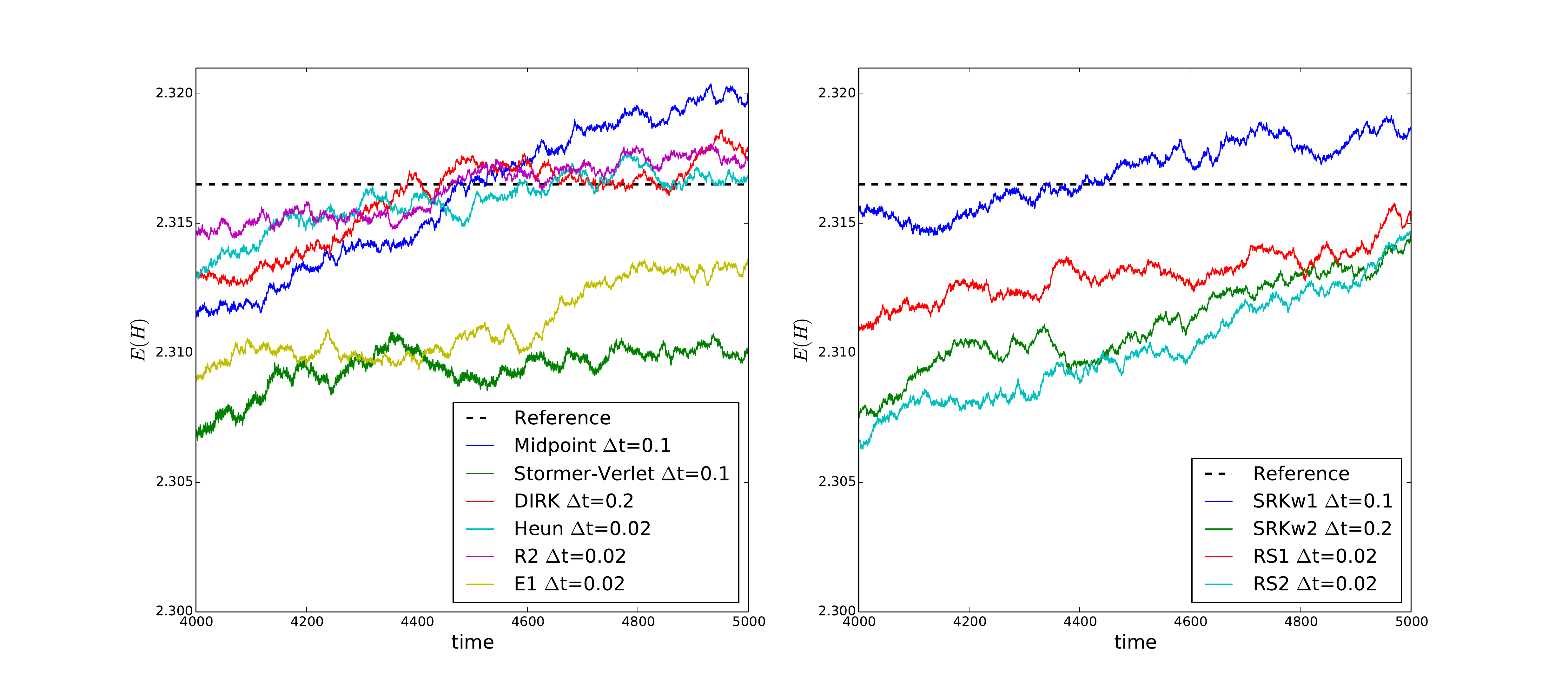}
		\caption{The numerical value of the mean Hamiltonian $E(H)$ for the simulations of Van der Pol's equation with the initial conditions $q_0=1$, $p_0=1$, and the parameters $\sigma=0.05$ and $\nu=0.001$, is shown on the time interval $[4000,5000]$ near the reference value for the solutions computed with the mean-square (\emph{Left}) and weak (\emph{Right}) methods. The reference value $H^{\text{erg}}=2.3165$ was calculated in \cite{MilsteinErgodic}.}
		\label{fig: Hamiltonian for Van der Pol - close up for all}
\end{figure}

\subsection{Vlasov equation}
\label{sec:Vlasov equation}

In recent years there has been a growing interest in applying geometric integration to particle-in-cell (PIC) simulations of the Vlasov equation in plasma physics. The results to date concern almost entirely collisionless cases (see \cite{Burby2017}, \cite{Evstatiev2013}, \cite{He2016}, \cite{Qin2016}, \cite{Shadwick2014}, \cite{Squire2012}, \cite{Stamm2014}, \cite{Xiao2015}). The first step towards a geometric description of collision operators, using the so-called metriplectic formulation, has been recently made in \cite{HirvijokiKrausMetriplectic}. Below we demonstrate that stochastic forced Hamiltonian systems provide an alternative structure-preserving description, and further consider two examples, namely the Lenard-Bernstein and the Lorentz collision operators, to test the long-time behavior of the stochastic Lagrange-d'Alembert integrators.

\subsubsection{Lenard-Bernstein collision operator}
\label{seq:Lenard-Bernstein collision operator}

The following two-dimensional Vlasov-Fokker-Planck equation 

\begin{equation}
\label{eq:Vlasov-Fokker-Planck equation}
\frac{\partial \rho}{\partial t}+v \frac{\partial \rho}{\partial x}-E(x)\frac{\partial \rho}{\partial v}=\nu \bigg( \mu \frac{\partial (v\rho)}{\partial v} + \frac{D^2}{2}\frac{\partial^2 \rho}{\partial v^2} \bigg)
\end{equation}

\noindent
has been studied in \cite{Kleiber2011} and \cite{Sonnendrucker2015} as a model for collisional kinetic plasmas, where $\rho=\rho(x,v,t)$ denotes the particle distribution function in the position-velocity phase space, $E(x)=-\phi'(x)$ is the external electric field with the electrostatic potential $\phi(x)$, and $\nu >0$, $\mu >0$, $D>0$ are real parameters. The right-hand side of \eqref{eq:Vlasov-Fokker-Planck equation} is the so-called Lenard-Bernstein collision operator, which models small-angle collisions and was originally used to study longitudinal plasma oscillations (see \cite{LenardBernstein1958}). A stochastic split particle-in-cell (PIC) method for the numerical simulation of \eqref{eq:Vlasov-Fokker-Planck equation} has been proposed in \cite{Sonnendrucker2015}, whereby the advection part is solved using the standard PIC method, and the diffusion part is modeled by a stochastic differential equation. Below we demonstrate a structure-preserving approach to solving \eqref{eq:Vlasov-Fokker-Planck equation}. When $\rho$ is interpreted as a probability density function, then \eqref{eq:Vlasov-Fokker-Planck equation} is the Fokker-Planck equation for the two-dimensional stochastic process $(X(t),V(t))$ whose evolution is governed by the stochastic differential equation (see \cite{GardinerStochastic}, \cite{KloedenPlatenSDE})

\begin{align}
\label{eq:SDE for the Vlasov equation}
d_t X = V \,dt, \qquad\qquad d_t V = (-E(X)-\nu \mu V) \,dt + \sqrt{\nu}D\circ dW(t),
\end{align}

\noindent
driven by the one-dimensional Wiener process $W(t)$. This equation is a stochastic forced Hamiltonian system \eqref{eq: Stochastic dissipative Hamiltonian system} with

\begin{align}
\label{eq:Hamiltonian and forces for the Vlasov equation}
H(X,V) &= \frac{1}{2}V^2 - \phi(X), & F(X,V) &= -\nu\mu V, \nonumber \\
h(X,V) &= -\sqrt{\nu}DX, & f(X,V)&=0.
\end{align}

\noindent
It can be easily verified that the stationary solution of \eqref{eq:Vlasov-Fokker-Planck equation} is given by the \emph{Gibbs measure}

\begin{equation}
\label{eq:Gibbs measure}
\rho_\infty(x,v) = \frac{1}{Z}e^{-\frac{2 \mu}{D^2}H(x,v)} = \frac{1}{Z}e^{\frac{2 \mu}{D^2}\phi(x)}e^{-\frac{\mu}{D^2}v^2},
\end{equation}

\noindent
where $Z$ is the normalizing constant such that $\int \int \rho_\infty(x,v) \, dv\,dx = 1$. Let us consider \eqref{eq:Vlasov-Fokker-Planck equation} on the domain $(x,v) \in [0,1]\times \mathbb{R}$ with periodic boundary conditions in $x$, and with the electrostatic potential

\begin{equation}
\label{eq:Electrostatic potential}
\phi(x) = -\frac{E_0}{4 \pi} \sin 4 \pi x,
\end{equation}

\noindent
where $E_0>0$ is the maximum magnitude of the electric field $E(x)=-\phi'(x)$. One can check that the system \eqref{eq:Hamiltonian and forces for the Vlasov equation} with the potential \eqref{eq:Electrostatic potential} is ergodic (see Theorem~3.2 in \cite{Mattingly2002}). As the initial condition, we take the probability distribution of the form

\begin{equation}
\label{eq:Initial conditions for the probability density}
\rho(x,v,0) = \rho_X(x) \rho_V(v) = (1+\epsilon \cos 2\pi x) \bigg( \frac{1}{1+a}\frac{1}{\sqrt{2 \pi}}e^{-\frac{1}{2}v^2} + \frac{a}{1+a} \frac{1}{\sqrt{2 \pi} \sigma}e^{-\frac{1}{2\sigma^2}(v-v_0)^2} \bigg),
\end{equation}

\noindent
where $\rho_X(x)$ for $\epsilon > 0$ describes a perturbation of the uniform distribution along the spatial direction $x$, and $\rho_V(v)$ for $a>0$ is the so called \emph{bump-on-tail} distribution in the velocity space, where the bump is centered at $v_0$ with the standard deviation $\sigma >0$. Simulations with the parameters $\nu=0.01$, $\mu=1$, $D=\sqrt{2}$, $E_0=3$, $\epsilon=0.25$, $a=0.5$, $v_0=4$, and $\sigma=0.5$ were carried out until the time $T=1000$. In each case $10^7$ sample paths were generated. The initial conditions $X_0$ and $V_0$ were randomly drawn from the probability distribution \eqref{eq:Initial conditions for the probability density} using rejection sampling (see Figure~\ref{fig: Initial probability density for the Vlasov equation}). The exact ergodic value $H^\text{erg}$ of the Hamiltonian can be calculated using the invariant probability density \eqref{eq:Gibbs measure} as

\begin{figure}
	\centering
		\includegraphics[width=\textwidth]{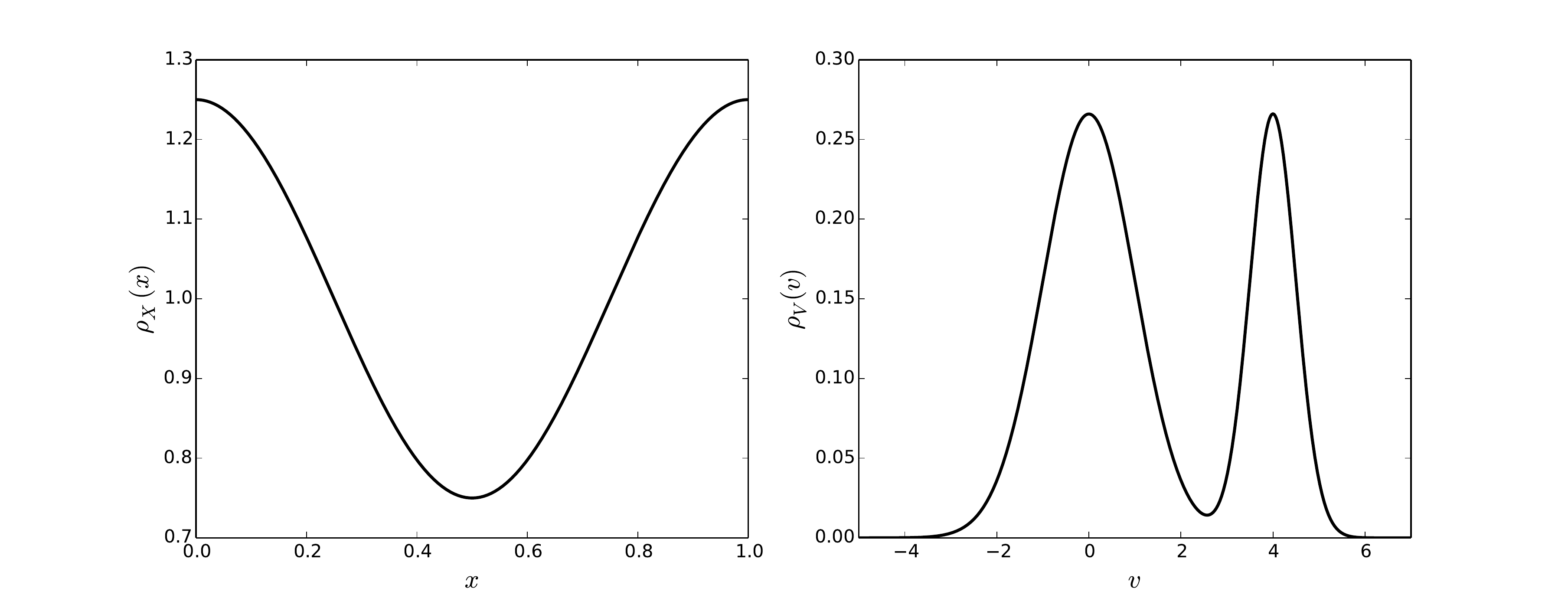}
		\caption{The initial probability density \eqref{eq:Initial conditions for the probability density} with the parameters $\epsilon=0.25$, $a=0.5$, $v_0=4$, and $\sigma=0.5$ for the simulations of the Vlasov equation with the Lenard-Bernstein collision operator.}
		\label{fig: Initial probability density for the Vlasov equation}
\end{figure}

\begin{equation}
\label{eq:Ergodic value of the Hamiltonian for the Vlasov equation}
H^\text{erg} = \int_0^1 \int_{-\infty}^\infty H(x,v) \rho_\infty(x,v) \,dv\,dx \approx 0.471705.
\end{equation}

\begin{figure}
	\centering
		\includegraphics[width=0.95\textwidth]{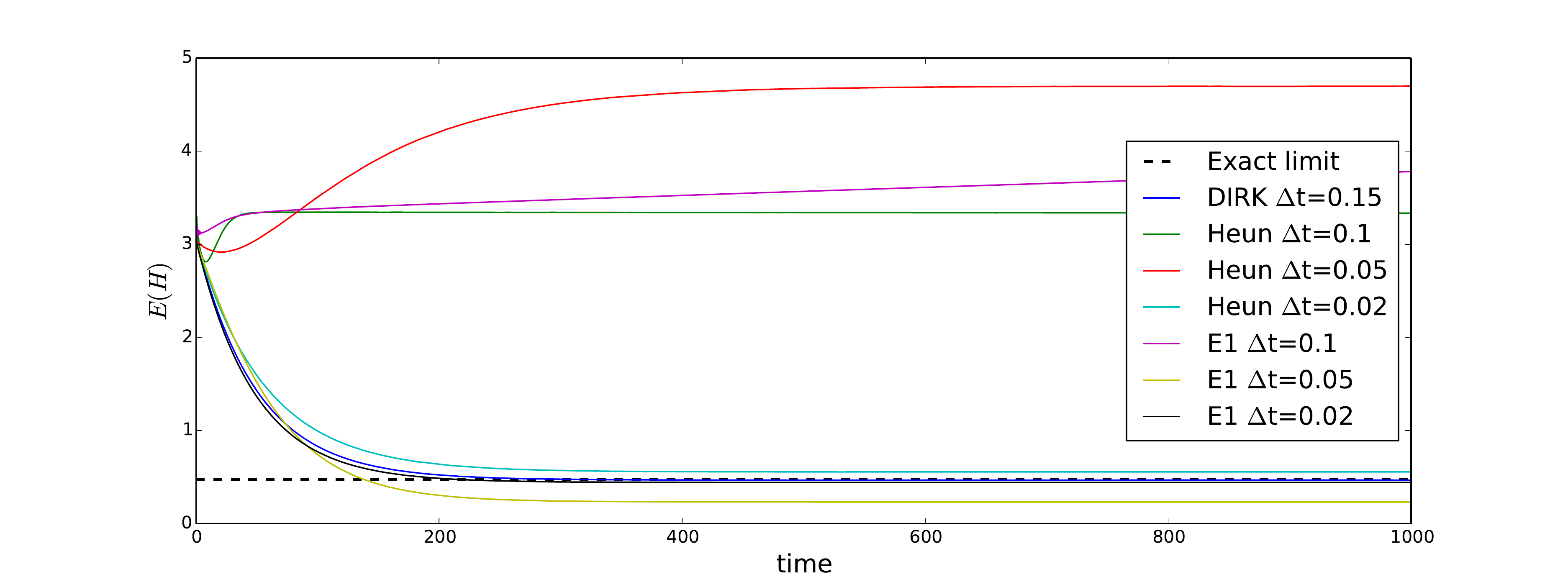}
		\caption{The numerical value of the mean Hamiltonian $E(H)$ as a function of time for the simulations of the Vlasov equation with the Lenard-Bernstein collision operator with the parameters $\nu=0.01$, $\mu=1$, $D=\sqrt{2}$, $E_0=3$, $\epsilon=0.25$, $a=0.5$, $v_0=4$, and $\sigma=0.5$, and the initial conditions $X_0$ and $V_0$ sampled from the distribution \eqref{eq:Initial conditions for the probability density}, is shown for the solutions computed with the DIRK, Heun, and $E1$ methods. The exact ergodic limit $H^{\text{erg}}\approx 0.471705$ was calculated in \eqref{eq:Ergodic value of the Hamiltonian for the Vlasov equation}. The DIRK method accurately reproduces the ergodic limit even with the relatively large time step $\Delta t = 0.15$, while the $E1$ method requires the much smaller time step $\Delta t = 0.02$ to reach a comparable level of accuracy. The Heun method yields a less accurate result even for $\Delta t = 0.02$.  For the clarity of the plot the other Lagrange-d'Alembert and explicit methods are not depicted, but they demonstrate similar behavior. Note that the plots for the DIRK method and the $E1$ method with $\Delta t=0.02$ overlap very closely.}
		\label{fig: Hamiltonian for Vlasov for DIRK}
\end{figure}
\begin{figure}
	\centering
		\includegraphics[width=0.9\textwidth]{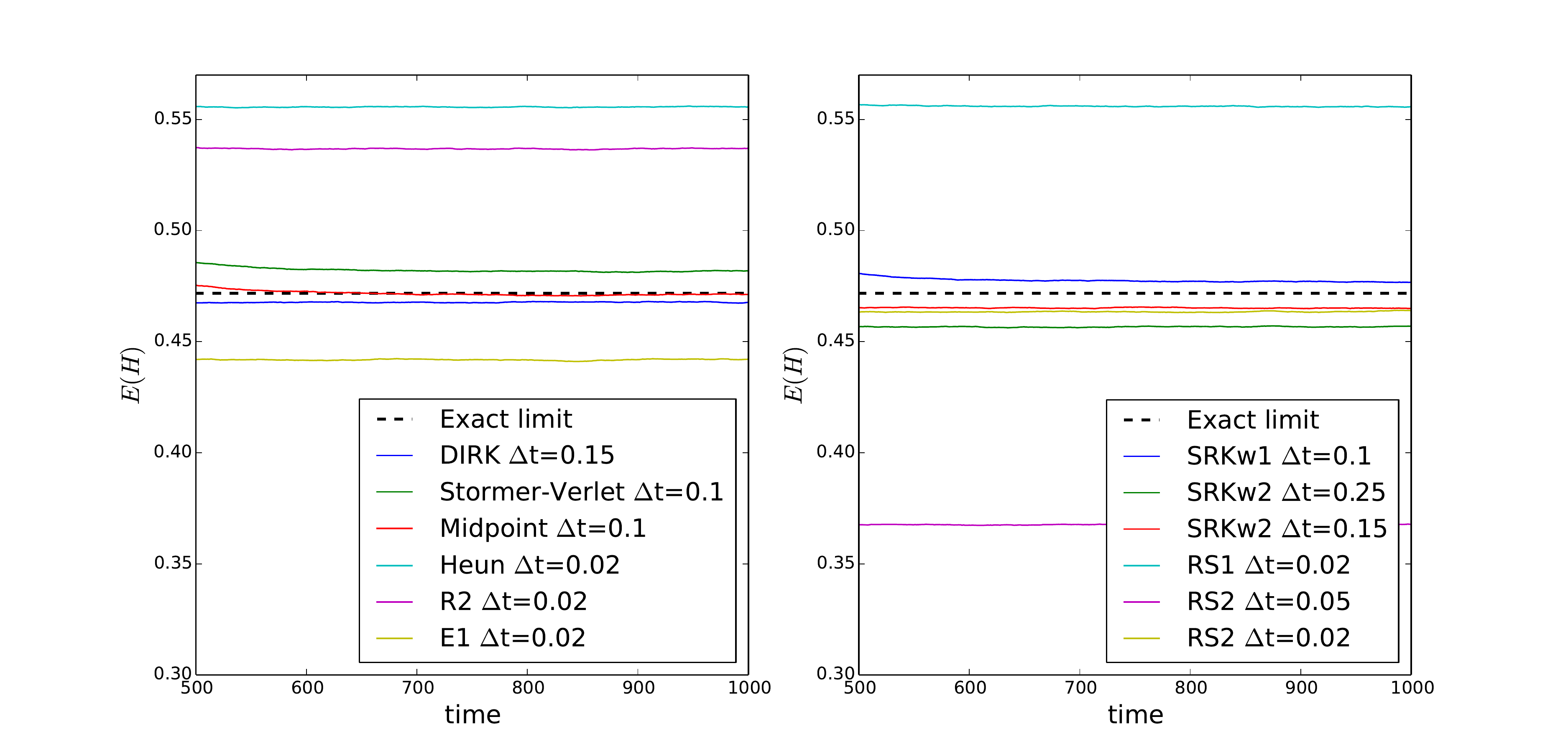}
		\caption{The numerical value of the mean Hamiltonian $E(H)$ for the simulations of the Vlasov equation with the Lenard-Bernstein collision operator with the parameters $\nu=0.01$, $\mu=1$, $D=\sqrt{2}$, $E_0=3$, $\epsilon=0.25$, $a=0.5$, $v_0=4$, and $\sigma=0.5$, and the initial conditions $X_0$ and $V_0$ sampled from the distribution \eqref{eq:Initial conditions for the probability density}, is shown on the time interval $[500,1000]$ near the exact ergodic limit for the solutions computed with the mean-square (\emph{Left}) and weak (\emph{Right}) methods. The exact ergodic limit $H^{\text{erg}}\approx 0.471705$ was calculated in \eqref{eq:Ergodic value of the Hamiltonian for the Vlasov equation}.}
		\label{fig: Hamiltonian for Vlasov - close up for all}
\end{figure}

\begin{figure}
	\centering
		\includegraphics[width=\textwidth]{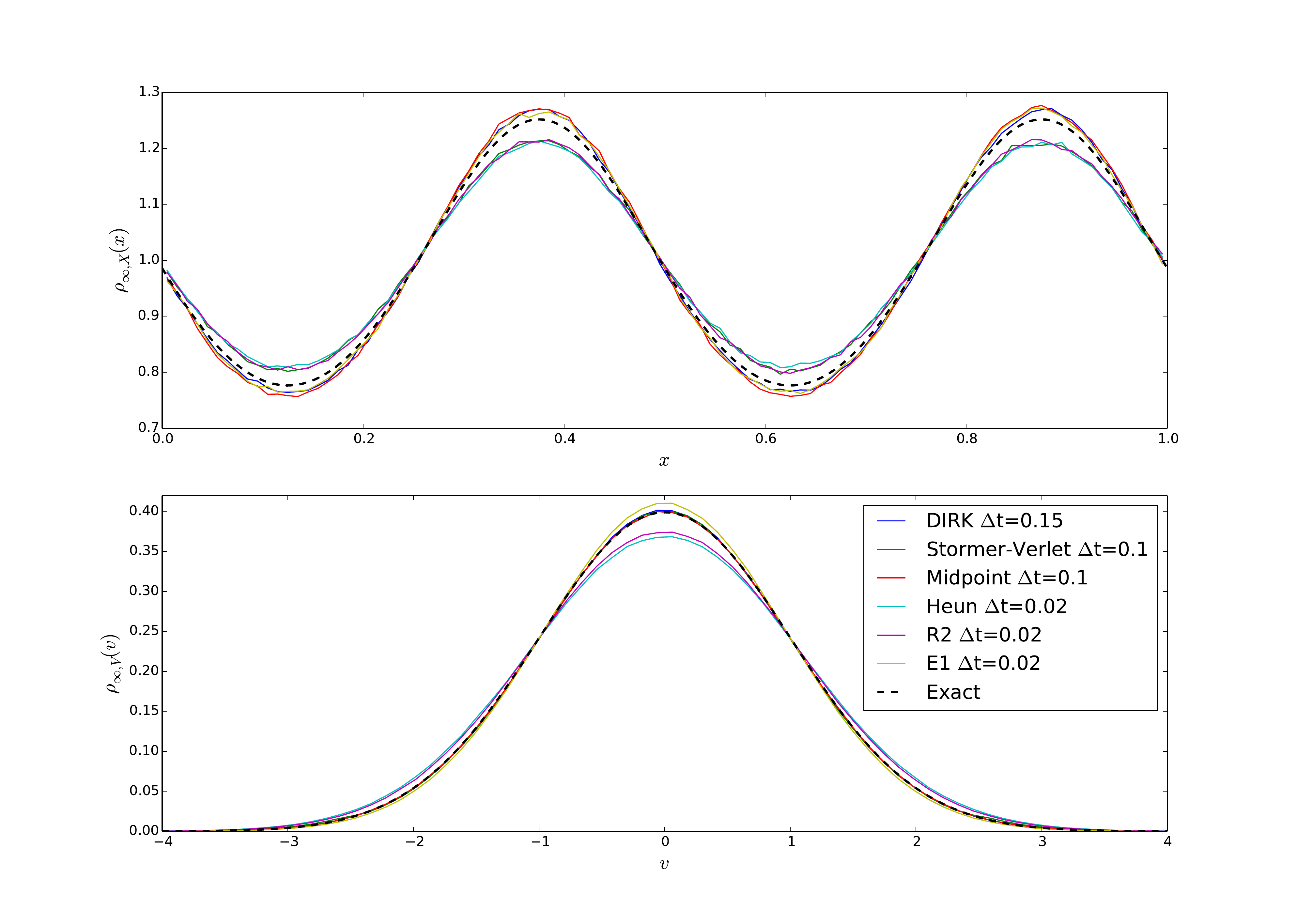}
		\caption{The numerical probability density at time $T=1000$ for the simulations of the Vlasov equation with the Lenard-Bernstein collision operator with the parameters $\nu=0.01$, $\mu=1$, $D=\sqrt{2}$, $E_0=3$, $\epsilon=0.25$, $a=0.5$, $v_0=4$, and $\sigma=0.5$, and the initial conditions $X_0$ and $V_0$ sampled from the distribution \eqref{eq:Initial conditions for the probability density}, is depicted for each of the mean-square integrators, and compared to the exact invariant measure \eqref{eq:Gibbs measure}. Note that in the top figure the plots for the DIRK, midpoint, and $E1$ methods, as well as the plots for the St\"{o}rmer-Verlet, Heun and $R2$ methods, overlap very closely. In the bottom figure the plots for the DIRK, St\"{o}rmer-Verlet, and midpoint methods overlap very closely with the exact solution. }
		\label{fig: Final density for Vlasov - mean-square integrators}
\end{figure}

\begin{figure}
	\centering
		\includegraphics[width=\textwidth]{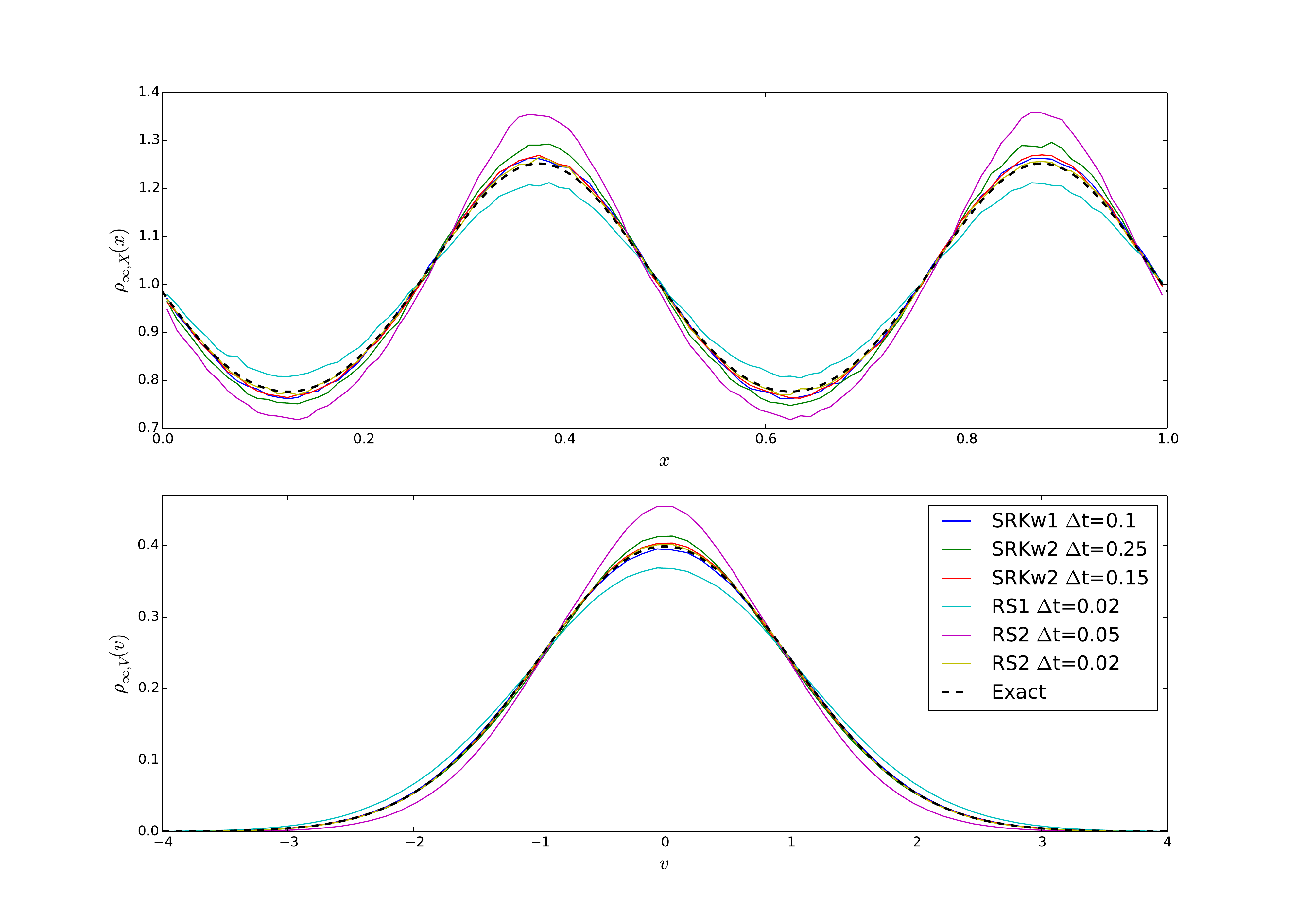}
		\caption{The numerical probability density at time $T=1000$ for the simulations of the Vlasov equation with the Lenard-Bernstein collision operator with the parameters $\nu=0.01$, $\mu=1$, $D=\sqrt{2}$, $E_0=3$, $\epsilon=0.25$, $a=0.5$, $v_0=4$, and $\sigma=0.5$, and the initial conditions $X_0$ and $V_0$ sampled from the distribution \eqref{eq:Initial conditions for the probability density}, is depicted for each of the weak integrators, and compared to the exact invariant measure \eqref{eq:Gibbs measure}. Note that the plots for the $SRKw1$ method, $SRKw2$ method with $\Delta t = 0.15$, and the $RS2$ method with $\Delta t = 0.02$ in the top figure, as well as the plots for the $SRKw1$ method, $SRKw2$ method with $\Delta t = 0.25$, $SRKw2$ method with $\Delta t = 0.15$, and the $RS2$ method with $\Delta t = 0.02$ in the bottom figure, overlap very closely with the exact solution.}
		\label{fig: Final density for Vlasov - weak integrators}
\end{figure}

\noindent
The numerical value of the mean Hamiltonian $E(H)$ as a function of time is depicted in Figure~\ref{fig: Hamiltonian for Vlasov for DIRK} for the DIRK, Heun, and $E1$ methods. We see that the DIRK method accurately reproduces the ergodic limit even with the relatively large time step $\Delta t = 0.15$, while the $E1$ method requires the much smaller time step $\Delta t = 0.02$ to reach a comparable level of accuracy. The Heun method yields a less accurate result even for $\Delta t = 0.02$. The situation is similar for the other Lagrange-d'Alembert and explicit integrators. Figure~\ref{fig: Hamiltonian for Vlasov - close up for all} depicts the behavior of $E(H)$ near the exact ergodic limit on the time interval $[500,1000]$ for each of the tested integrators. The maximum relative Monte Carlo error $\sigma(E(H))/E(H)$ did not exceed $8.24\cdot10^{-4}$ in any of the simulations. The numerical probability density at the final time $T=1000$ calculated with each of the mean-square and weak methods is depicted in comparison to the exact invariant measure \eqref{eq:Gibbs measure} in Figure~\ref{fig: Final density for Vlasov - mean-square integrators} and Figure~\ref{fig: Final density for Vlasov - weak integrators}, respectively.

\subsubsection{Lorentz collision operator}
\label{seq:Lorentz collision operator}

The following four-dimensional Vlasov-Fokker-Planck equation 

\begin{equation}
\label{eq:Vlasov-Fokker-Planck equation with Lorentz operator}
\frac{\partial \rho}{\partial t}+v_x \frac{\partial \rho}{\partial x}+v_y \frac{\partial \rho}{\partial y}-E_x(x,y)\frac{\partial \rho}{\partial v_x}-E_y(x,y)\frac{\partial \rho}{\partial v_y}=\nu \bigg( v_x \frac{\partial}{\partial v_y} - v_y \frac{\partial}{\partial v_x} \bigg)^2 \rho
\end{equation}

\noindent
has been used in \cite{Banks2016} to study the electron-ion collision effects on the damping of electron plasma waves, where $\rho=\rho(x,y,v_x,v_y,t)$ denotes the particle distribution function in the position-velocity phase space, $E_x(x,y)=-\frac{\partial \phi}{\partial x}(x,y)$ and $E_y(x,y)=-\frac{\partial \phi}{\partial y}(x,y)$ are the components of the external electric field with the electrostatic potential $\phi(x,y)$, and $\nu >0$ is a real parameter. The right-hand side of \eqref{eq:Vlasov-Fokker-Planck equation with Lorentz operator} is the so-called Lorentz collision operator, which models electron-ion interactions via pitch-angle scattering. The primary effect of this type of scattering is a change of the direction of the electron's velocity with negligible energy loss. More information about the Lorentz collision operator can be found in \cite{Karney1986}. Below we demonstrate a structure-preserving approach to solving \eqref{eq:Vlasov-Fokker-Planck equation with Lorentz operator}. When $\rho$ is interpreted as a probability density function, then \eqref{eq:Vlasov-Fokker-Planck equation with Lorentz operator} is the Fokker-Planck equation for the four-dimensional stochastic process $(X(t), Y(t), V_x(t), V_y(t))$ whose evolution is governed by the stochastic differential equation (see \cite{GardinerStochastic}, \cite{KloedenPlatenSDE})

\begin{align}
\label{eq:SDE for the Vlasov equation with Lorentz operator}
d_t X &= V_x \,dt, & d_t V_x &= -E_x(X,Y)\,dt + \sqrt{2\nu}V_y\circ dW(t), \nonumber \\
d_t Y &= V_y \,dt, & d_t V_y &= -E_y(X,Y)\,dt - \sqrt{2\nu}V_x\circ dW(t),
\end{align}

\noindent
driven by the one-dimensional Wiener process $W(t)$. This equation is a stochastic forced Hamiltonian system \eqref{eq: Stochastic dissipative Hamiltonian system} with

\begin{align}
\label{eq:Hamiltonian and forces for the Vlasov equation with Lorentz operator}
H(X,Y,V_x,V_y) &= \frac{1}{2}V_x^2 + \frac{1}{2}V_y^2 - \phi(X,Y), & F(X,Y,V_x,V_y) &= (0,0), \nonumber \\
h(X,Y,V_x,V_y) &= 0, & f(X,Y,V_x,V_y)&=\big(\sqrt{2 \nu} V_x,-\sqrt{2 \nu} V_y\big).
\end{align}

\noindent
Let us consider \eqref{eq:Vlasov-Fokker-Planck equation with Lorentz operator} on the domain $(x,y,v_x,v_y) \in [0,1]^2\times \mathbb{R}^2$ with periodic boundary conditions in $x$ and $y$, and with the electrostatic potential

\begin{equation}
\label{eq:Electrostatic potential for Lorentz operator}
\phi(x,y) = -\frac{E_0}{4 \pi} \sin 4 \pi x \, \sin 4 \pi y,
\end{equation}

\noindent
where $E_0>0$ is the maximum magnitude of the electric field $E(x,y)=-\nabla\phi(x,y)$. As the initial condition, we take the probability distribution of the form

\begin{equation}
\label{eq:Initial conditions for the probability density for Lorentz operator}
\rho(x,y,v_x,v_y,0) = \frac{1}{2 \pi}(1+\epsilon_1 \cos 2\pi x) (1+\epsilon_2 \cos 2\pi y) e^{-\frac{v_x^2+v_y^2}{2}} ,
\end{equation}

\noindent
where the parameters $\epsilon_1, \epsilon_2 > 0$ describe a perturbation of the uniform distribution along the spatial directions $x$ and $y$, and the velocity part is Maxwellian. The Lorentz collision operator by construction preserves the total energy of the plasma, that is,

\begin{equation}
\label{eq:Preservation of energy for Lorentz operator}
E(H) = \int_0^1 \int_0^1 \int_{-\infty}^\infty \int_{-\infty}^\infty H(x,y,v_x,v_y) \rho(x,y,v_x,v_y,t)\,dv_y dv_x dy dx = \text{const},
\end{equation}

\noindent
where $E(H) \equiv E\Big( H\big( X(t),Y(t),V_x(t),V_y(t) \big) \Big)$ for short (see \cite{Karney1986}). Moreover, in the stochastic description \eqref{eq:SDE for the Vlasov equation with Lorentz operator} the Hamiltonian is almost surely preserved for each sample path, which can be easily verified by calculating the stochastic differential

\begin{equation}
\label{eq:Stochastic differential dH}
dH = \frac{\partial H}{\partial X} \circ dX + \frac{\partial H}{\partial Y} \circ dY + \frac{\partial H}{\partial V_x} \circ dV_x + \frac{\partial H}{\partial V_y} \circ dV_y = 0,
\end{equation} 

\noindent
where the last equality follows from \eqref{eq:Hamiltonian and forces for the Vlasov equation with Lorentz operator} and \eqref{eq:SDE for the Vlasov equation with Lorentz operator}. 
\begin{figure}
	\centering
		\includegraphics[width=\textwidth]{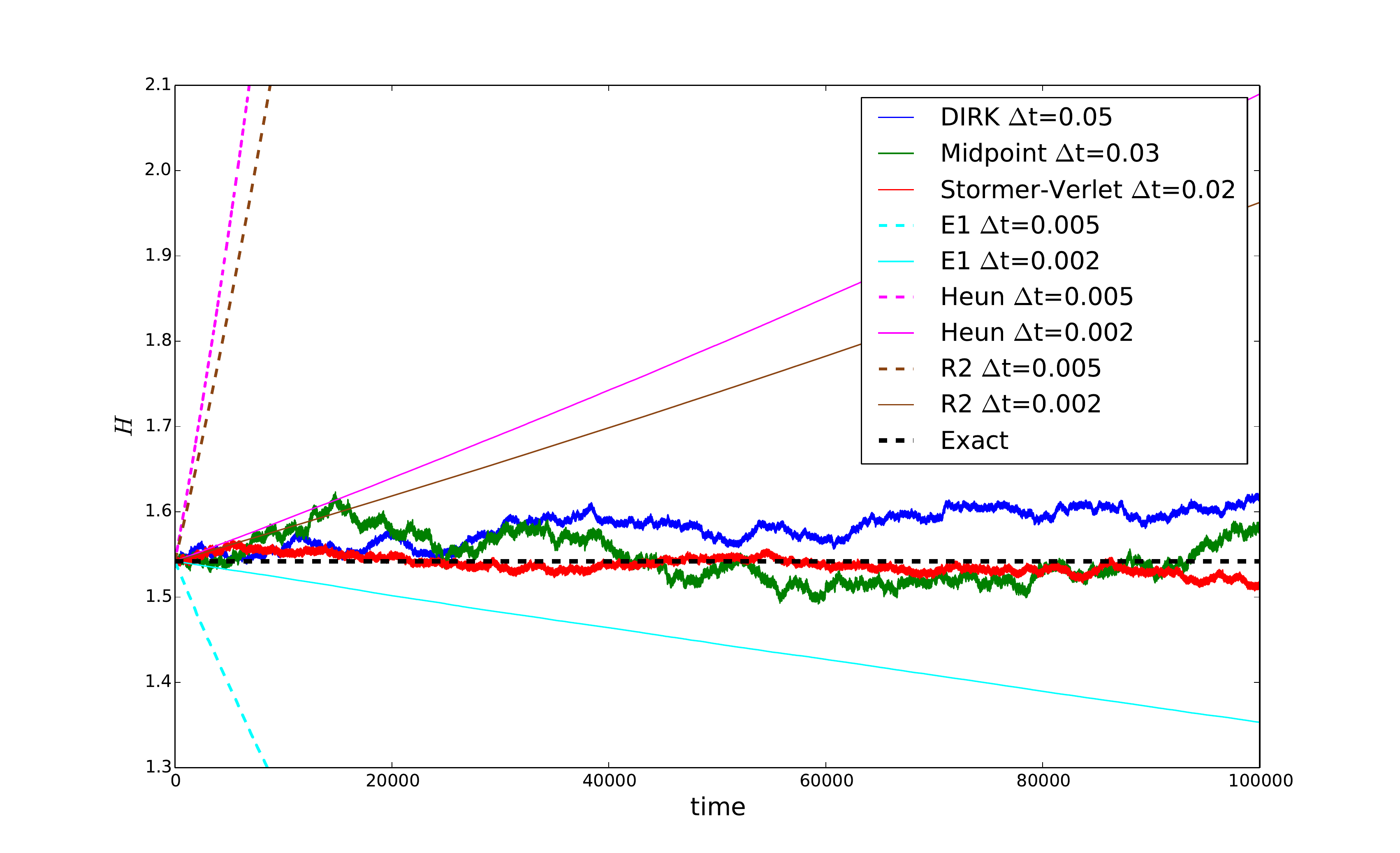}
		\caption{The numerical value of the Hamiltonian $H$ as a function of time for the simulations of the Vlasov equation with the Lorentz collision operator with the parameters $\nu=0.005$, $E_0=3$, $\epsilon_1=\epsilon_2=0.25$, and a single random initial condition sampled from the distribution \eqref{eq:Initial conditions for the probability density for Lorentz operator}, is shown for the solutions computed with the mean-square methods. For each integrator the same random initial condition and the same realization of the Brownian motion were used. The DIRK, midpoint, and St\"{o}rmer-Verlet methods accurately reproduce the conservation of energy over a long integration time, while the $E1$, $R2$, and Heun methods fail to do so even when significantly smaller time steps are used.}
		\label{fig: Energy for Vlasov with Lorentz operator - strong integrators}
\end{figure}
Mean-square integrators aim to approximate each sample path of the exact solution, and therefore they should also approximate the stronger energy preservation condition \eqref{eq:Stochastic differential dH}. In order to test the long-time performance of the mean-square integrators discussed in Section~\ref{sec:Examples of strong methods}, simulations with the parameters $\nu=0.005$, $E_0=3$, and $\epsilon_1=\epsilon_2=0.25$ were carried out for a single sample path until the time $T=100000$. For each integrator the same random initial condition, drawn from the probability distribution \eqref{eq:Initial conditions for the probability density for Lorentz operator}, and the same realization of the Brownian motion were used. The numerical value of the Hamiltonian $H$ as a function of time is depicted in Figure~\ref{fig: Energy for Vlasov with Lorentz operator - strong integrators}. Even with relatively large time steps the mean-square Lagrange-d'Alembert integrators preserve energy much more accurately than the non-geometric explicit methods. 
\begin{figure}
	\centering
		\includegraphics[width=\textwidth]{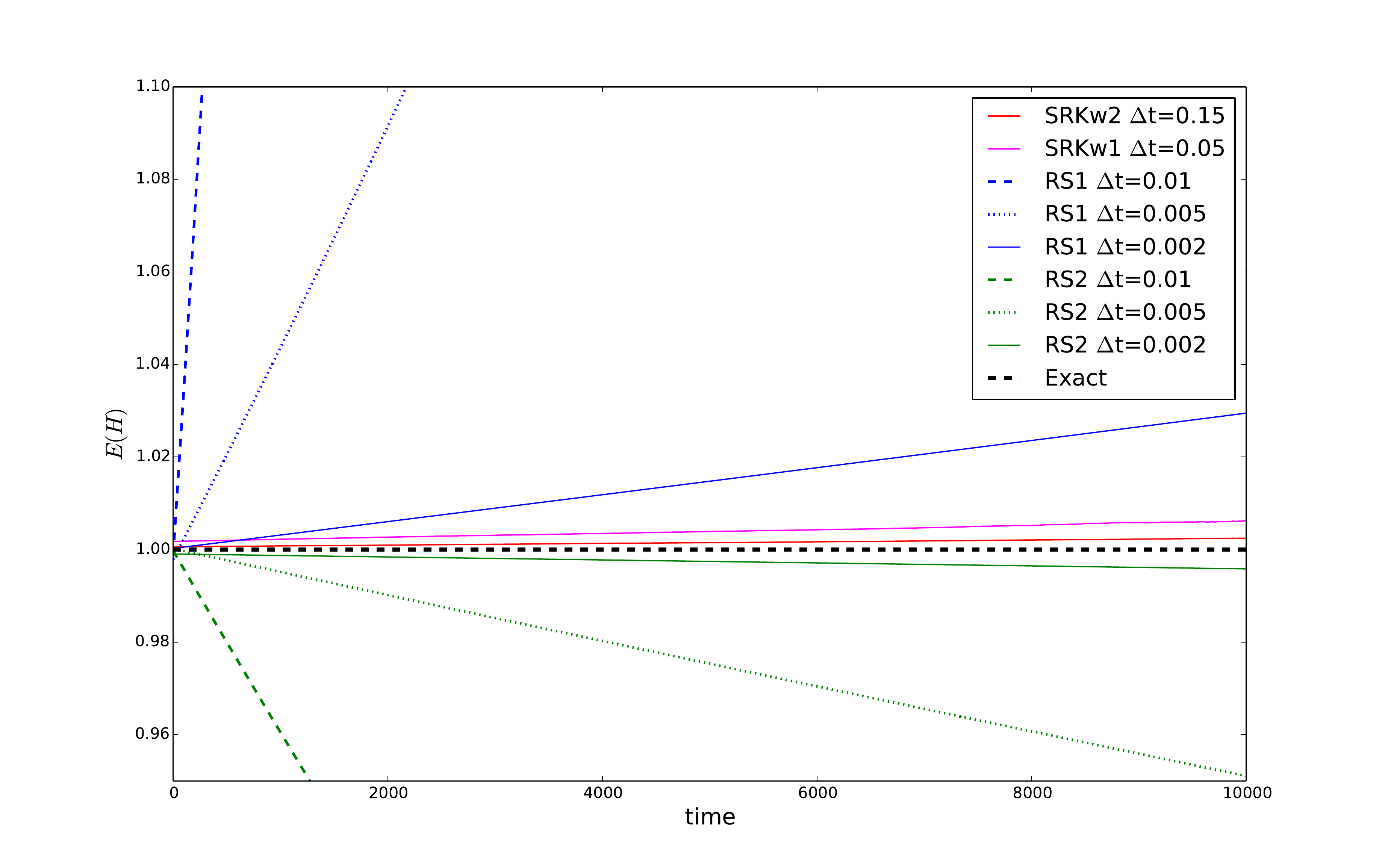}
		\caption{The numerical value of the mean Hamiltonian $E(H)$ as a function of time for the simulations of the Vlasov equation with the Lorentz collision operator with the parameters $\nu=0.005$, $E_0=3$, $\epsilon_1=\epsilon_2=0.25$, and the initial conditions sampled from the distribution \eqref{eq:Initial conditions for the probability density for Lorentz operator}, is shown for the solutions computed with the weak methods, and compared to the exact value $E(H)_{\text{exact}}=1$. The $SRKw1$ and $SRKw2$ methods accurately reproduce the conservation of mean energy over a long integration time even with relatively large time steps. The $RS2$ method requires the significantly smaller time step $\Delta t = 0.002$ to reach a comparable lever of accuracy, while the method $RS1$ remains less accurate even for $\Delta t = 0.002$.}
		\label{fig: Energy for Vlasov with Lorentz operator - weak integrators}
\end{figure}
On the other hand, weak integrators aim to approximate the probability distribution and functionals of the exact solutions rather than each sample path, therefore they may not preserve energy on each sample path, but nevertheless they should approximate the mean energy \eqref{eq:Preservation of energy for Lorentz operator}. In order to test the long-time performance of the weak integrators discussed in Section~\ref{sec:Weak Examples}, simulations with the same parameters as above were carried out until the time $T=10000$. In each case $10^6$ sample paths were generated. The initial conditions were randomly drawn from the probability distribution \eqref{eq:Initial conditions for the probability density for Lorentz operator}. The exact mean energy can be calculated by substituting \eqref{eq:Hamiltonian and forces for the Vlasov equation with Lorentz operator}, \eqref{eq:Electrostatic potential for Lorentz operator}, and \eqref{eq:Initial conditions for the probability density for Lorentz operator} into \eqref{eq:Preservation of energy for Lorentz operator}. For the chosen parameters, we have $E(H)_{\text{exact}}=1$.The numerical value of the mean Hamiltonian $E(H)$ as a function of time is depicted in Figure~\ref{fig: Energy for Vlasov with Lorentz operator - weak integrators} for the $SRKw1$, $SRKw2$, $RS1$, and $RS2$ methods. We see that the weak Lagrange-d'Alembert methods accurately reproduce the mean energy conservation even with the relatively large time steps, while the non-geometric methods require much smaller time steps to reach a comparable level of accuracy.  The maximum relative Monte Carlo error $\sigma(E(H))/E(H)$ did not exceed $0.001$ in any of the simulations.

\section{Summary and future work}
\label{sec:Summary}

In this paper we have presented a general framework for constructing a new class of stochastic variational integrators for stochastic forced Hamiltonian systems. We have extended the approach taken in \cite{HolmTyranowskiGalerkin} by considering the stochastic Lagrange-d'Alembert principle and constructing the corresponding structure-preserving schemes, which we have dubbed stochastic Lagrange-d'Alembert variational integrators. We have shown that in the presence of a symmetry such integrators satisfy a discrete version of Noether's theorem. We have further considered certain classes of mean-square and weak Runge-Kutta methods previously known in the literature, and determined the conditions under which such methods become Lagrange-d'Alembert integrators. We have finally pointed out several examples of low-stage Runge-Kutta methods of that type, and demonstrated their superior long-time numerical performance via numerical experiments. In particular, as one of the test cases we have considered the Vlasov-Fokker-Planck equation and proposed a new geometric approach to the simulation of collisional kinetic plasmas.

Our work can be extended in several ways. The mean-square partitioned Runge-Kutta methods introduced in Section~\ref{sec:Mean-square Lagrange-d'Alembert partitioned Runge-Kutta methods} only use the increments $\Delta W^r = \int_{t_k}^{t_{k+1}}dW^r(t)$, therefore their mean-square order of convergence cannot exceed 1.0 (see \cite{Burrage2000}, \cite{MilsteinRepin2001}, \cite{MilsteinRepin}). To obtain mean-square convergence of higher order one can extend the definitions of the discrete Hamiltonian \eqref{eq:Discrete Hamiltonian for SPRK} and the discrete forces \eqref{eq:Discrete forces for SPRK} to include higher-order multiple Stratonovich integrals, e.g., to achieve convergence of order 1.5 we would need to include terms involving $\Delta Z^r = \int_{t_k}^{t_{k+1}}\int_{t_k}^{t}dW^r(\xi)\,dt$; see \cite{HolmTyranowskiGalerkin} for an example how this can be done for unforced Hamiltonian systems. Another aspect worth a more detailed investigation is the issue of ergodicity of the Lagrange-d'Alembert methods. In Section~\ref{sec:Ergodic limits} and Section~\ref{sec:Vlasov equation} we have experimentally demonstrated the usefulness of our integrators in calculating the ergodic limits, but have not formally proved their ergodicity. It would be beneficial to determine under what conditions Lagrange-d'Alembert integrators can be ergodic in the sense discussed in, e.g., \cite{Mattingly2002}, \cite{Mattingly2010}, or \cite{Talay2002}, when applied to ergodic Hamiltonian systems. It would also be interesting to extend the idea of Lagrange-d'Alembert integrators to stochastic Hamiltonian systems that are both forced and constrained. Structure-preserving numerical methods for such systems would be of great interest in molecular dynamics (see \cite{BouRabeeConstrainedSVI}, \cite{Ciccotti2008}, \cite{VandenCiccotti2006}). Yet another direction of great practical significance would be a further study of the geometric approach to collisional kinetic plasmas presented in Section~\ref{sec:Vlasov equation} and application of more realistic collision operators that preserve the total energy and momentum, as well as an extension to the self-consistent Maxwell-Vlasov equations (see \cite{KrausPHD}, \cite{KrausGEMPIC}). Finally, one may extend the idea of variational integration to stochastic multisymplectic partial differential equations such as the stochastic Korteweg-de Vries, Camassa-Holm or Hunter-Saxton equations. Theoretical groundwork for such numerical schemes has been recently presented in \cite{HolmTyranowskiVirasoro}.

\section*{Acknowledgements}

We would like to thank Christopher Albert, Darryl Holm, Katharina Kormann, Omar Maj, Philip Morrison, Bruce Scott, Cesare Tronci, and Udo von Toussaint for useful comments and references. We are particularly endebted to Eric Sonnendr\"{u}cker for pointing out the connections between stochastic systems and the Vlasov equation. The study is a contribution to the Reduced Complexity Models grant number ZT-I-0010 funded by the Helmholtz Association of German Research Centers.


\end{document}